\newcommand{\lbb}{{[\mspace{-3 mu} [}}
\newcommand{\rbb}{{] \mspace{-3 mu} ]}}
\newcommand{\ZZ}{{\mathbb Z}}
\newtheorem{thm}{Theorem}[section]
\newtheorem{prop}[thm]{Proposition}
\newtheorem{lemma}[thm]{Lemma}
\newtheorem{definition}[thm]{Definition}
\theoremstyle{definition}
\newtheorem{example}[thm]{Example}
\title{A chord diagram expansion coming from some Dyson-Schwinger equations}
\author{Nicolas Marie and Karen Yeats}
\thanks{This research was supported by an NSERC discovery grant.  The authors would like to thank Dirk Kreimer and Humboldt University for hospitality and support.}
\begin{document}

\begin{abstract}
  We give an expression for the solution to propagator-type Dyson-Schwinger equations with one primitive at 1 loop as an expansion over rooted connected chord diagrams.  Along the way we give a refinement of a classical recurrence of rooted connected chord diagrams, and a representation in terms of binary trees.
\end{abstract}

\maketitle

\section{Introduction}

Dyson-Schwinger equations are integral equations in quantum field theory which have a recursive structure that mirrors the recursive decomposition of Feynman diagrams into subdiagrams.  As such, they have a strong combinatorial flavour and there is much which can be said about them using combinatorial tools.  They are also very difficult to solve in general, and even partial information can provide physically valuable results.  

This paper looks at the special case where the underlying decomposition of diagrams has the same shape as the standard recurrence for rooted trees.  This occurs in Dyson-Schwinger equations for propagators built from one diagram with one loop inserted in one place.  For an example and more details see Section \ref{DSE section}.  

The analytic contribution of this one diagram (which must be a primitive graph in the renormalization Hopf algebra, as so will be referred to as \textbf{the primitive} of the Dyson-Schwinger equation) can be given in the form of an expansion
\[
  F(\rho) = \frac{f_0}{\rho} + f_1 + f_2\rho+ \cdots
\]
We will view the $f_i$ as known, given to us by physics.
The Dyson-Schwinger equation we are interested in is
\begin{equation}\label{DSEintro}
 G(x,L) = 1- x G\left(x,\frac{d}{d(-\rho)}\right)^{-1} (e^{-L\rho}-1)F(\rho) \big|_{\rho=0}
\end{equation}
and the problem we consider is to give an explicit expression for $G(x,L)$ in terms of the $f_j$. 

Let $\mathcal{RCCD}$ be the set of rooted connected chord diagrams, or equivalently connected arc diagrams of matchings.  These are classical and purely combinatorial objects.  Let the size of a rooted be the number of chords.  For details and definitions on chord diagrams see Section \ref{RCCD section}. 
The main result of this paper is that
\[
G(x,L) = 1 - \sum_{i\geq 1}\frac{(-L)^i}{i!}\sum_{\substack{C\in \mathcal{RCCD} \\ b(C) \geq i}}x^{|C|}f_{C}
f_{b(C)-i} 
\]
solves \eqref{DSEintro}, where $f_{C}$ is a monomial in the $f_j$ given by the chord diagram, and $b(C)$ is a parameter of $C$ which can be read off the intersection graph of $C$.  The proof of this result is a mix of explicit combinatorial constructions and recurrences.
The result gives us $G(x,L)$, which is a physically meaningful quantity, as a sort of multivariate generating function of chord diagrams.   Some initial observations on the consequences of this are given in Section~\ref{conclusion section}.

For the reader who wishes to jump directly to the combinatorial constructions and the proof of the main result, sections~\ref{RCCD section} and \ref{rec section} can be read independently of the motivation from Section~\ref{DSE section}.

\medskip

The paper is organized as follows.  Section \ref{DSE section} briefly runs through the physical set up which gives rise to problem considered in the paper.  It is written with the mathematical reader in mind and contains references to more comprehensive sources.  Section \ref{RCCD section} sets out the definitions and constructions we need on rooted chord diagrams.  Section \ref{rec section} begins by proving two recurrences, one of which refines a classical recurrence of Stein, and the other of which is a consequence of our tree representation for rooted chord diagrams.  Together these two recurrences let us conclude Section \ref{rec section} with a proof of our main result.  Section \ref{conclusion section} gives an elementary account of consequences of the result. These consequences can be divided in an analytic and a combinatorial part, each of which will be explored further in a subsequent work.  The paper concludes with Appendix \ref{obj appendix} which gives a table of examples of rooted chord diagrams and their associated trees.

\section{Dyson-Schwinger equations}\label{DSE section}

Let $\mathcal{F}$ be the set of forests of rooted trees.  Such forests have a size given by the number of vertices.  Let $\mathcal{H}$ be the graded vector space $\text{span}(\mathcal{F})$ over a base field of characteristic $0$.  Defining the product of two forests to be their disjoint union and extending linearly makes $\mathcal{H}$ into an algebra: the polynomial algebra generated by rooted trees.  The identity element is the empty tree and will be denoted $1$.  In fact, $\mathcal{H}$ has a nice combinatorially defined coproduct and is the Connes-Kreimer Hopf algebra of rooted trees \cite{ck0}.  This Hopf algebra structure underlies the quantum field theoretic context of the problem currently at hand, however we don't need it for the purposes of this paper and so it will be left to the references.

\begin{definition}
  Let $B_+ : \mathcal{H} \rightarrow \mathcal{H}$ be the operation which takes a forest $T_1 T_2 \cdots T_k$ and returns the rooted tree where the subtrees rooted at children of the root are $T_1, T_2, \ldots , T_k$, and extended linearly to all of $\mathcal{H}$.
\end{definition}

Using this notation we can capture the recursive decomposition of a tree into the subtrees of the root by the following equation
\begin{equation}\label{comb DSE}
  X(x) = 1 - xB_+\left(\frac{1}{X(x)}\right)
\end{equation}
Note that since $X(x)$ begins with $1$, $1/X(x)$ is well defined simply by expanding the geometric series.
The solution to this equation in $\mathcal{H}[[x]]$, which can be constructed recursively and checked inductively, is
\[
  X(x) = 1 - \sum_{T}x^{|T|}p(T)T
\]
where the sum runs over all rooted trees and $p(T)$ is the number of distinct plane representations of a tree $T$, or equivalently the number of nonisomorphic orderings of the children of each vertex.  Other such equations yield other classes of trees \cite{bergk}.  Mapping each tree to $1$ we would get the usual generating function; we will be interested in more subtle maps from trees and graphs to numbers.

The same basic structure, though in general considerably more intricate, is seen in the nesting of subdivergent Feynman diagrams in larger Feynman diagrams.  This observation is the beginning of the algebraic or combinatorial approach to Dyson-Schwinger equations as found in papers such as \cite{anatomy, kythesis, etude, vS}.  \textbf{Combinatorial Dyson-Schwinger equations} are equations such as \eqref{comb DSE} and its generalizations which describe this recursive structure for a given class of Feynman graphs.

In some simple instances the combinatorial Dyson-Schwinger equation will be exactly in the same form as \eqref{comb DSE}.  For example \cite{bkerfc} considers two such cases.  One of these cases is the case where 
\[
\includegraphics{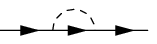}
\] 
is inserted into itself in all possible ways, yielding graphs such as
\[
\includegraphics{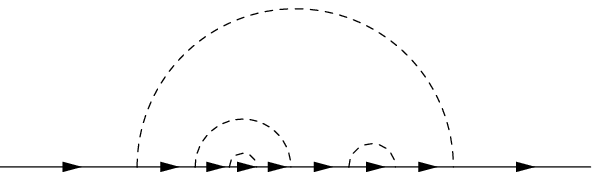}
\]
which has a tree structure to its insertions.
This form will appear whenever we are interested in iterating a propagator graph into itself along one internal edge.  More general Dyson-Schwinger equations are more complicated as different sorts of graphs can be inserted in different places, and subgraphs can overlap.

\textbf{Feynman rules} give a map from Feynman graphs into (formal) integrals.  One can then regularize the integrals, renormalize them, and sum over appropriate graphs to obtain physically meaningful values.  A reference in more or less the language used here is \cite{e-fksurvey}.

Applying Feynman rules to the combinatorial Dyson-Schwinger equations gives \textbf{analytic Dyson-Schwinger equations} which are integral equations for the Green functions of the system in question.  They have the same basic recursive structure that the combinatorial Dyson-Schwinger equations did.  Continuing the above example \cite{bkerfc} the combinatorial Dyson-Schwinger equation
\[
  X(x) = \mathbb{I} - x B_+
\left(\frac{1}{X(x)}\right)
\]
interpreted as inserting the graph
\[
\includegraphics{Yukawaeg}
\]
into itself iteratively in all possible ways, yields the analytic Dyson-Schwinger equation
\[
G(x, L) = 1 - \frac{x}{q^2}\int d^4 k \frac{k \cdot q}{k^2 G(x,
  \log k^2)(k+q)^2} - \cdots \bigg|_{q^2 = \mu^2}
\]
where $L = \log(q^2/\mu^2)$.  See \cite{kythesis} (also available as \cite{Ymem}) Example 3.5 for further details.

The analytic Dyson-Schwinger equations are still not in the form we need. Example 3.7 of \cite{kythesis, Ymem} begins with the above example, proceeds to expand $G(x,L)$ in $L$, convert logarithms to powers using $\frac{d^ky^{\rho}}{d\rho^k}  |_{\rho=0} = \log^k(y)$, swap the order of the operators, and thus obtains
\[
 G(x,L) = 1- x G\left(x,\frac{d}{d(-\rho)}\right)^{-1} (e^{-L\rho}-1)F(\rho) \big|_{\rho=0}
\]
where $F(\rho)$ is the Feynman integral of the primitive with the propagator we are inserting on regularized, and the integral evaluated at $q^2=1$.

Rather than set up appropriate hypotheses on the original analytic Dyson-Schwinger equations so as to guarantee that these transformations are always possible (which they will be in physically reasonable circumstances), we will now follow \cite{kythesis} by \emph{defining} the analytic Dyson-Schwinger equation associated to \eqref{comb DSE} to be

\begin{equation}\label{DSE}
  G(x,L) = 1- x G\left(x,\frac{d}{d(-\rho)}\right)^{-1} (e^{-L\rho}-1)F(\rho) \big|_{\rho=0}
\end{equation}
where
\[
  F(\rho) = \frac{f_{0}}{\rho} + f_1 + f_2\rho + f_3\rho^2 + \cdots
\]
which is the integral of the primitive graph regularized by raising the insertion propagator to the power $1+\rho$, evaluated with external momentum equal to $1$, and expanded in $\rho$.  $F(\rho)$ is viewed as given.

Write
\[
G(x,L) = 1-\sum_{k \geq 1}\gamma_k(x)L^k
\]
and view the $\gamma_k(x)$ as unknown series in $x$.  Expanding it out, one can see that \eqref{DSE} determines the $\gamma_k(x)$ in terms of the $f_i$, but only in a quite unwieldy way.  

The goal of \cite{kythesis,Ymem} was to convert considerably more general Dyson-Schwinger equations into a more workable form at the cost of introducing a new unknown function $P$.  This new form proved quite useful, in \cite{vBKUY,vBKUY2}  one of us along with Dirk Kreimer, Guillaume van Baalen, and David Uminsky use the new form to investigate QED and QCD showing in the former case that it is possible to avoid the Landau pole.  Marc Bellon and Fidel Schaposnik \cite{BellonII, BellonDE, BellonDSE, BShigher} have investigated approximation schemes based on this method.  

The present paper is more modest in the sense that only the Dyson-Schwinger equation \eqref{DSE} is considered.  However in another sense it is better; the result gives an explicit solution to this Dyson-Schwinger equation as an expansion indexed by rooted chord diagrams; no mystery functions or recurrences.  In the last section we give some consequences of this expansion and we will pursue the physical consequences more fully in future work with Dirk Kreimer.

\section{Rooted chord diagrams}\label{RCCD section} 

In this section we will define the combinatorial objects and constructions we need.
 
At the heart of the proof that the formal solution to the Dyson--Schwinger equation \eqref{DSEintro} is given by
\begin{equation*}
G(x,L)\,=\,1 - \sum_{i \in \mathbb{N^*}} \frac{(-L)^i}{i!} \, \sum_{\substack{ C \in \mathcal{RCCD} \\ b(C) \geq i}} f_{C}\,f_{b(C)-i}\,x^{|C|}
\end{equation*}
is the combinatorics of rooted connected chord diagrams. In particular, the monomials in the $f_{j}$ appearing in this expansion are indexed by the sequences of gaps between distinguished chords called the terminal chords of the diagrams.\\

We will describe these structures as well as a representation of the chord diagrams by planar binary trees whose decomposition is the main ingredient of the recurrences in section $4$.

\subsection{Chord diagrams}

\begin{definition}
A \textbf{chord diagram} of order n is the data of $2n$ points $(p_{1},\cdots,p_{2n})$ arranged on a circle, together with $n$ distinct pairs of distinct points $\left\{ \{p_{i_{1}},p_{i_{2}}\}, \cdots, \{p_{i_{n}},p_{i_{2n}}\} \right\}$ where $\{p_{i_{k}},p_{i_{2k}} \}$ is represented by a chord joining the points $p_{i_{k}}$ and $p_{i_{2k}}$ on the circle. It is \textbf{rooted} when we distinguish one of those points on the circle. It is \textbf{connected} when after erasing the circle we are left with a connected graph.
\end{definition}

In the following $\mathcal{RCCD}$ denote the set of rooted connected chord diagrams. We say that a diagram with $n$ chords has degree $n$ and we denote by $\mathcal{RCCD}(n)$ the family of these diagrams.
Unless stated otherwise our chord diagrams are oriented counterclockwise, meaning that the points on the outer circle are numbered $p_{1},p_{2},...,p_{2n}$ with $p_{1}$ corresponding to the root and continuing counterclockwise from this point.

\begin{figure}[H]

 \begin{center}
 \hspace{3cm}
\begin{pspicture}(0,0)(12.8,3.2)
\pscircle(1,1.900){1.25}
\psarc[linecolor=black]{*-*}(1.8,0.65){0.9}{68}{178}
\psarc[linecolor=black]{*-*}(3,3.3){2.5}{186}{244}
\psarc[linecolor=black]{*-*}(0.1,4){1.5}{263}{323}
\psarc[linecolor=black]{*-*}(0.1,4){1.9}{260}{326}
\pscircle[linecolor=red](0.9,0.68){0.12}
\rput(1,0.2){\tiny(A)}
\rput(0.8,0.48){\tiny$1$}
\rput(2,0.868){\tiny$2$}
\rput(1.8,3.11){\tiny$3$}
\rput(1.4,3.29){\tiny$4$}

\pscircle(8,1.900){1.25}
\psarc[linecolor=black]{*-*}(8.8,0.65){0.9}{68}{178}
\psarc[linecolor=black]{*-*}(7.1,4){1.5}{263}{323}
\psarc[linecolor=black]{*-*}(6.8,-0.2){2}{30}{91}
\pscircle[linecolor=red](7.9,0.68){0.12}
\rput(8,0.2){\tiny(B)}
\rput(7.8,0.48){\tiny$1$}
\rput(8.6,0.62){\tiny$2$}
\rput(8.42,3.28){\tiny$3$}


\end{pspicture}
\end{center}

\caption{ \small $(A)$ is a rooted connected chord diagram of degree $4$ while $(B)$ has degree $3$ but is not connected. Here the root is the circled vertex, the root chord is numbered $1$ and the rest is labelled in the counterclockwise order.}
\end{figure}
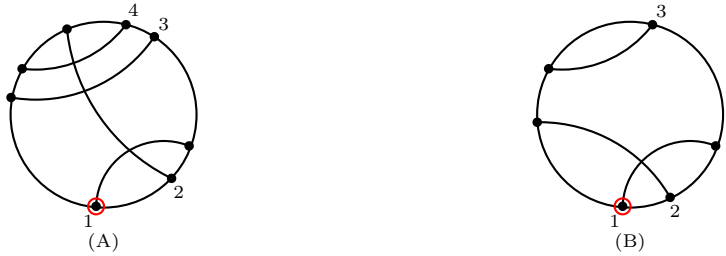 \hspace{0.5cm}

\noindent Observe that in general a chord diagram is distinct from its mirror image as can be seen in the following example:

\begin{figure}[H]

 \begin{center}
 \hspace{3cm}
\begin{pspicture}(0,0)(12.8,3.2)
\pscircle(1,1.900){1.25}
\psarc[linecolor=black]{*-*}(1.8,0.65){0.9}{68}{178}
\psarc[linecolor=black]{*-*}(3,3.3){2.5}{186}{244}
\psarc[linecolor=black]{*-*}(0.1,4){1.5}{263}{323}
\psarc[linecolor=black]{*-*}(0.1,4){1.9}{260}{326}
\pscircle[linecolor=red](0.9,0.68){0.12}
\rput(0.8,0.48){\tiny$1$}
\rput(2,0.868){\tiny$2$}
\rput(1.8,3.11){\tiny$3$}
\rput(1.4,3.29){\tiny$4$}

\rput(15,0){
\psscalebox{-1 1}{
\pscircle(8,1.900){1.25}
\psarc[linecolor=black]{*-*}(8.8,0.65){0.9}{68}{178}
\psarc[linecolor=black]{*-*}(7.1,4){1.5}{263}{323}
\psarc[linecolor=black]{*-*}(10,3.3){2.5}{186}{244}
\psarc[linecolor=black]{*-*}(7.1,4){1.9}{260}{326}
\pscircle[linecolor=red](7.9,0.68){0.12}
}
}
\psline[linestyle=dashed,linewidth=0.1cm]{C-C}(4.1,0.25)(4.1,3.5)
\rput(10,1.2){.}
\rput(7.2,0.38){\tiny$1$}
\rput(8.5,2.1){\tiny$2$}
\rput(8.3,2.6){\tiny$3$}
\rput(7.6,3.2){\tiny$4$}


\end{pspicture}
\end{center}
\end{figure} \hspace{0.5cm}

\begin{definition}
Let $X$ be a rooted connected chord diagram of degree $n$. We denote by $\mathcal{I}(X)$ the labeled directed graph whose set of vertices $\{1,2,...,i,...,n\}$ corresponds to the set of chords of $X$ where $i$ stands for the $i^{\text{th}}$ chords in the counterclockwise order and there is a directed edge from $i$ to $j$ if the $i^{\text{th}}$ chord intersects the $j^{\text{th}}$ chord with $i<j$. The graph $\mathcal{I}(X)$ is called the \textbf{directed intersection diagram} of $X$.
\end{definition}

Here is an example of a rooted connected chord diagram and its directed intersection graph:

\begin{figure}[H]

 \begin{center}
 \hspace{3cm}
\begin{pspicture}(0,0)(12.8,3)
\pscircle(1,1.900){1.25}
\psarc[linecolor=black]{*-*}(1.8,0.65){0.9}{68}{178}
\psarc[linecolor=black]{*-*}(3,3.3){2.5}{186}{244}
\psarc[linecolor=black]{*-*}(0.1,4){1.5}{263}{323}
\psarc[linecolor=black]{*-*}(0.1,4){1.9}{260}{326}
\pscircle[linecolor=red](0.9,0.68){0.12}
\rput(0.8,0.48){\tiny$1$}
\rput(2,0.868){\tiny$2$}
\rput(1.8,3.11){\tiny$3$}
\rput(1.4,3.29){\tiny$4$}

\rput(5,1.9){1}
\psline{->}(5.1,1.9)(6.4,1.9)
\rput(6.5,1.9){2}
\psline{->}(6.6,1.9)(7.9,1.9)
\rput(8,1.9){3}
\psbezier[linestyle=solid]{->}(6.6,2.1)(8,2.5)(8,2.5)(9.4,2.1)
\rput(9.5,1.9){4}

\rput(11.5,1.9){.}

\end{pspicture}
\end{center}
\end{figure} \vspace{-0.5cm}

\begin{definition}
Let $X$ be a rooted connected chord diagram and $\mathcal{I}(X)$ its directed intersection diagram. A chord $i$ is said to be \textbf{terminal} if the vertex $i$ of $\mathcal{I}(X)$ has no outgoing edges.
\end{definition}

Hence a terminal chord does not intersect any chord with a larger index. As one can see in the examples presented below, the linear order on the vertices of the directed intersection graphs makes it easy to observe the gaps between terminal chords. \\

But unfortunately things are not straightforward and at this point we need to relabel the chords of our diagrams in a new order called the intersection order.\\

This order is defined recursively directly on the chord diagrams or on their intersection diagrams. We choose this second option and we express this new order as a permutation of the counterclockwise order.

\begin{definition}
Let $X$ be a rooted connected chord diagram of degree $n$ with its sequence of chords $(1,2,...,n)$ in the counterclockwise order and $\mathcal{I}(X)$ its intersection diagram. Apply the following recursive procedure:\\
$1)$ consider the graph $\mathcal{I}(X)$ and delete the edges going out of its smallest vertex, the vertex $1$;\\
$2)$ obtain $k$ connected components $\mathcal{I}_{1}(X)=\{ 1 \},\mathcal{I}_{2}(X),...,\mathcal{I}_{k}(X)$ where the smallest vertex of $\mathcal{I}_{p}(X)$ is larger than the smallest vertex of $\mathcal{I}_{q}(X)$ when $q<p$;\\
$3)$ then each connected component $\mathcal{I}_{p}(X)$ is associated to its sequence of vertices $(x_{1,p},x_{2,p},...)$ in counterclockwise order. This defines a permutation $(1,2,...,n) \mapsto (1,x_{1,1},x_{2,1},...,x_{1,2},x_{2,2},...,x_{1,k},x_{2,k},...)$;
$4)$ apply this procedure recursively to each $\mathcal{I}_{p}(X),(x_{1,p},x_{2,p},...)$ until we are left with $n$ singleton;\\
This defines a permutation $\sigma_{X}: (1,2,...,n) \mapsto (\sigma_{1},\sigma_{2},...,\sigma_{n})$ that we call the \textbf{intersection order} of $X$.
\end{definition}
\vspace{0.5cm}

This procedure is easily understood on an example:\\ \vspace{1cm}
\begin{figure}[H]

 \begin{center}
 \hspace{3cm}
\begin{pspicture}(0,0)(12.8,3.2)

\pscircle(-2,1.900){1.25}
\psrotate(-2,1.9){45}{\psline{*-*}(-2,0.65)(-2,3.15)}
\psrotate(-2,1.9){-45}{\psline{*-*}(-2,0.65)(-2,3.15)}
\psrotate(-2,1.9){55}{\psarc[linecolor=black]{*-*}(-2,3.75){1}{230}{310}}
\psrotate(-2,1.9){-55}{\psarc[linecolor=black]{*-*}(-2,3.75){1}{230}{310}}

\pscircle[linecolor=red](-2.89,1.01){0.12}
\rput(-2.89,0.75){\tiny$1$}
\rput(-1.0,0.8){\tiny$2$}
\rput(-0.55,2){\tiny$3$}
\rput(-2.57,3.25){\tiny$4$}
\rput(-3.5,0.55){\tiny$(X)$}

\rput(-2,-0.25){$\sigma_{X}=(1243)$}

\rput(2,3.5){1}
\psline{->}(2.1,3.5)(3.4,3.5)
\psbezier[linestyle=solid]{->}(2.1,3.7)(3.5,4)(3.5,4)(4.9,3.7)
\rput(3.5,3.5){2}
\psbezier[linestyle=solid]{->}(3.6,3.3)(5,3.1)(5,3.1)(6.4,3.3)
\rput(5,3.5){3}
\rput(6.5,3.5){4}

\rput(2,1.5){1}
\rput(3.5,1.5){2}
\psline{->}(3.6,1.5)(4.9,1.5)
\rput(5,1.5){4}
\rput(6.5,1.5){3}

\rput(2,-0.5){1}
\rput(3.5,-0.5){2}
\rput(5,-0.5){3}
\rput(6.5,-0.5){4}

\rput(10,4.2){\textbf{Step 1}}
\rput(10,3.5){$(1,2,3,4)$}
\rput(10,2.2){\textbf{Step 2}}
\rput(10,1.5){$(1,2,4,3)$}
\rput(10,0.2){\textbf{Step 3}}
\rput(10,-0.5){$(1,2,4,3)$}

\rput(4.25,2.8){\small$\mathcal{I}(X)$}
\rput(2,0.8){\small$\mathcal{I}_{1}(X)$}
\rput(4.25,0.8){\small$\mathcal{I}_{2}(X)$}
\rput(6.5,0.8){\small$\mathcal{I}_{3}(X)$}
\rput(2,-1.1){\small$\mathcal{I}_{1}(X)$}
\rput(3.5,-1.1){\small$\mathcal{I}_{2,1}(X)$}
\rput(5,-1.1){\small$\mathcal{I}_{2,2}(X)$}
\rput(6.5,-1.1){\small$\mathcal{I}_{3}(X)$}

\end{pspicture}
\end{center}

\end{figure} \vspace{1cm}

 where each step corresponds to rearranging the elements of each block with respect to their smallest label as prescribed in the definition.\\

\noindent So finally we obtain the chord diagram and its corresponding intersection diagram now labeled in the intersection order given by $\sigma_{X}=(1243)$: 

\begin{figure}[H]

 \begin{center}
 \hspace{3.5cm}
\begin{pspicture}(0,0)(12.8,3)

\pscircle(-0,1.900){1.25}
\psrotate(-0,1.9){45}{\psline{*-*}(-0,0.65)(-0,3.15)}
\psrotate(-0,1.9){-45}{\psline{*-*}(-0,0.65)(-0,3.15)}
\psrotate(-0,1.9){55}{\psarc[linecolor=black]{*-*}(-0,3.75){1}{230}{310}}
\psrotate(-0,1.9){-55}{\psarc[linecolor=black]{*-*}(-0,3.75){1}{230}{310}}

\pscircle[linecolor=red](-0.89,1.01){0.12}
\rput(-0.89,0.75){\tiny$1$}
\rput(1.0,0.8){\tiny$2$}
\rput(1.55,2){\tiny$4$}
\rput(-0.57,3.25){\tiny$3$}
\rput(-1.5,0.55){\tiny$(X)$}


\rput(6,2){1}
\psline{->}(6.1,2)(7.4,2)
\psbezier[linestyle=solid]{->}(6.1,2.2)(8,2.5)(8,2.5)(10.4,2.2)
\rput(7.5,2){2}
\psline{->}(7.6,2)(8.9,2)
\rput(9,2){3}
\rput(10.5,2){4}

\rput(8.25,1){\small$\mathcal{I}_{\sigma}(X)$}

\rput(11.25,2){.}

\end{pspicture}
\end{center}

\end{figure} \vspace{0cm}

If $\mathcal{I}(X)$ is a directed intersection graph we denote by $\mathcal{I}_{\sigma}(X)$ the graph obtained by relabelling the vertices with the permutation $\sigma_{X}$. This operation is an automorphism of the graph $\mathcal{I}(X)$ so in if $i$ was a terminal (respectively initial) chord of $X$ in the counterclockwise order, $\sigma_{i}=\sigma_{X}(i)$ is a terminal (respectively initial) chord of $X$ in the intersection order.

For chord diagrams of small degree the intersection order and the counterclockwise order coincide most of the time. It is only at higher degrees that we start to see the differences between these orders as one can observe on some families of examples given below.\\

\begin{definition}
Let $X$ be a rooted connected chord diagram with intersection order $\sigma_{X}=(\sigma_{1},...,\sigma_{n})$. The sequence of terminal chords of $X$ in the intersection order is denoted by $Ter_{\sigma}(X)=(\sigma_{i_{1}},...,\sigma_{i_{k}})$ with $\sigma_{i_{1}}<\sigma_{i_{2}}<...<\sigma_{i_{k}}$. We associate to $Ter_{\sigma}(X)$ the sequence of consecutive gaps between terminal chords in the intersection order:
\begin{equation*}
\delta(X)=(\sigma_{i_{2}}-\sigma_{i_{1}},\sigma_{i_{3}}-\sigma_{i_{2}},...,\sigma_{i_{k}}-\sigma_{i_{k-1}})\,.
\end{equation*}
\end{definition}

We denote by $b(X)$ the first element of $Ter_{\sigma}(X)$ i.e. the smallest terminal chord in the intersection order.\\

It is easier to handle sequences of gaps $\delta(X)$ with constant lengths over the chord diagrams with constant degree. So if $X$ has degree $n$ and $\delta(X)=(\delta_{1},...,\delta_{k})$ we introduce:
\begin{equation*}
\bar{\delta}(X)=\underbrace{(0,...,0,}_{n-k-1 \text{times}}\delta_{1},...,\delta_{k})\,,
\end{equation*}

so that $\bar{\delta}(X)$ has length $n-1$ if $X$ has degree $n$.\\

These are the objects that index the monomials in $f_{j}$ appearing in the $\gamma_{k}$ expansions and we define:
\begin{equation*}
f_{X}=f_{0}^{n-k-1}\,f_{\delta_{1}}\,f_{\delta_{2}}\,...\,f_{\delta_{k}}\,.
\end{equation*}

Now that we understand the correspondence between the monomials in the chord diagram expansion and the gaps separating terminal chords we need to describe a decomposition of the chord diagrams.
Any rooted connected chord diagram of degree $n$ defines $2n$ intervals on the outer circle that we label $0,1,2,....,2n-1$ starting with $0$ for the interval preceding the root and progressing counterclockwise:

\begin{figure}[H]

 \begin{center}
 \hspace{3.5cm}
\begin{pspicture}(0,0)(10,3)

\psscalebox{1.11}{
\pscircle(-0.5,1.9){1.25}
\psline{*-*}(-0.5,0.65)(-0.5,3.15)
\psarc[linecolor=black]{*-*}(-0.5,3.15){1.3}{212}{328}
\pscircle[linecolor=red](-0.5,0.65){0.12}
\rput(-1.1,1.1){\tiny$0$}
\rput(0.15,1.1){\tiny$1$}
\rput(0.15,2.7){\tiny$2$}
\rput(-1.1,2.7){\tiny$3$}
}

\psscalebox{1}{
\pscircle[fillstyle=solid,fillcolor=gray](5.1,1.9){0.5}
\psline{*-}(5.1,0.4)(5.1,1.4)
\psrotate(5.1,1.9){30}{\psline{*-}(5.1,0.4)(5.1,1.4)}
\psrotate(5.1,1.9){-30}{\psline{*-}(5.1,0.4)(5.1,1.4)}
\psrotate(5.1,1.9){60}{\psline{*-}(5.1,0.4)(5.1,1.4)}
\psrotate(5.1,1.9){180}{\psline{*-}(5.1,0.4)(5.1,1.4)}
\psrotate(5.1,1.9){210}{\psline{*-}(5.1,0.4)(5.1,1.4)}
\psarc[linecolor=black,linestyle=dashed]{-}(5.1,1.9){1.5}{0}{70}
\psarc[linecolor=black,linestyle=solid]{-}(5.1,1.9){1.5}{70}{140}
\psarc[linecolor=black,linestyle=dashed]{-}(5.1,1.9){1.5}{145}{220}
\psarc[linecolor=black,linestyle=solid]{-}(5.1,1.9){1.5}{225}{360}
\pscircle[linecolor=red](5.1,0.4){0.12}
\rput(4.8,0.6){\tiny$0$}
\rput(5.4,0.6){\tiny$1$}
\rput(6,1){\tiny$2$}
\rput(4.8,3.2){\tiny$k$}
\rput(8,1.9){.}
}

\end{pspicture}
\end{center}
\end{figure} \vspace{-0.9cm}\hspace{0.55cm}

\subsection{Decomposition and trees}

The rest of the constructions are based on the following insertion operation defined on rooted connected chord diagrams.

\begin{definition}
Let $m,n\in \mathbb{N}^*$ and $0<i\leq 2n-1$. Define the operation $\stackrel{\curvearrowright}{(0,i)} : \mathcal{RCCD}(m) \times \mathcal{RCCD}(n) \longrightarrow \mathcal{RCCD}(m+n)$ for all $X \in \mathcal{RCCD}(m) $ and  $Y \in \mathcal{RCCD}(n) $ by

\begin{figure}[H]

 \begin{center}
 \hspace{1cm}
\begin{pspicture}(0,0)(12.8,3.2)
\psarc[linecolor=black,linestyle=solid]{-}(1,1.9){1.5}{-15}{15}
\psarc[linecolor=black,linestyle=dashed]{-}(1,1.9){1.5}{20}{70}
\psarc[linecolor=black,linestyle=solid]{-}(1,1.9){1.5}{75}{105}
\psarc[linecolor=black,linestyle=dashed]{-}(1,1.9){1.5}{110}{160}
\psarc[linecolor=black,linestyle=solid]{-}(1,1.9){1.5}{165}{195}
\psarc[linecolor=black,linestyle=dashed]{-}(1,1.9){1.5}{200}{240}
\psarc[linecolor=black,linestyle=solid]{-}(1,1.9){1.5}{245}{295}
\psarc[linecolor=black,linestyle=dashed]{-}(1,1.9){1.5}{300}{340}
\pscircle[fillstyle=solid,fillcolor=gray](1,1.9){0.5}
\psline[linecolor=red]{*-}(1,0.4)(1,1.4)
\psrotate(1,1.9){90}{\psline{*-}(1,0.4)(1,1.4)}
\psrotate(1,1.9){180}{\psline[linecolor=red]{*-}(1,0.4)(1,1.4)}
\psrotate(1,1.9){270}{\psline{*-}(1,0.4)(1,1.4)}
\pscircle[linecolor=red](1,0.4){0.12}
\rput(1,1.9){$X$}

\rput(3.25,2){$\stackrel{\curvearrowright}{(0,i)}$}

\psarc[linecolor=black,linestyle=dashed]{-}(5.5,1.9){1.5}{110}{220}
\psarc[linecolor=black,linestyle=dashed]{-}(5.5,1.9){1.5}{320}{345}
\psarc[linecolor=black,linestyle=solid]{-}(5.5,1.9){1.5}{-5}{100}
\psarc[linecolor=black,linestyle=solid]{-}(5.5,1.9){1.5}{230}{315}
\pscircle[fillstyle=solid,fillcolor=gray](5.5,1.9){0.5}
\psline{*-}(5.5,0.4)(5.5,1.4)
\psrotate(5.5,1.9){90}{\psline{*-}(5.5,0.4)(5.5,1.4)}
\psrotate(5.5,1.9){180}{\psline{*-}(5.5,0.4)(5.5,1.4)}
\psrotate(5.5,1.9){35}{\psline[linestyle=solid]{*-}(5.5,0.4)(5.5,1.4)}
\psrotate(5.5,1.9){325}{\psline[linestyle=solid]{*-}(5.5,0.4)(5.5,1.4)}
\pscircle[linecolor=red](5.5,0.4){0.12}
\rput(5.2,0.65){\tiny$0$}
\rput(5.82,0.65){\tiny$1$}
\rput(6.55,2.75){\small$i$}
\rput(5.5,1.9){$Y$}

\rput(7.75,1.9){$=$}


\psarc[linecolor=black,linestyle=dashed]{-}(10,1.9){1.5}{270}{350}
\psarc[linecolor=black,linestyle=dashed]{-}(10,1.9){1.5}{35}{65}
\psarc[linecolor=black,linestyle=dashed]{-}(10,1.9){1.5}{125}{210}

\psarc[linecolor=black,linestyle=solid]{-}(10,1.9){1.5}{-5}{30}
\psarc[linecolor=black,linestyle=solid]{-}(10,1.9){1.5}{70}{120}
\psarc[linecolor=black,linestyle=solid]{-}(10,1.9){1.5}{210}{270}

\pscircle[fillstyle=solid,fillcolor=gray](9.85,1.5){0.35}
\pscircle[fillstyle=solid,fillcolor=gray](10.5,2.35){0.35}
\psbezier[linestyle=solid,linecolor=red]{*-}(9.37,0.54)(9.6,1.5)(9.6,1.5)(10.25,2.1)
\psbezier[linestyle=solid,linecolor=red]{-*}(10.75,2.6)(10.95,2.7)(10.95,2.7)(11.05,2.95)
\psrotate(10,1.9){40}{\psline{*-}(8.5,1.9)(9.3,1.85)}
\psrotate(10,1.9){80}{\psline{*-}(8.5,1.9)(9.249,1.82)}
\psrotate(10,1.9){180}{\psline{*-}(8.5,1.9)(9.85,2.15)}
\psrotate(10,1.9){200}{\psline{*-}(8.5,1.9)(9.08,1.8)}
\psrotate(10,1.9){260}{\psline{*-}(8.5,1.9)(9.19,2.10)}
\psrotate(10,1.9){290}{\psline{*-}(8.5,1.9)(10.0,1.700)}
\pscircle[linecolor=red](9.37,0.54){0.12}
\rput(9.85,1.5){$Y$}
\rput(10.5,2.35){$X$}

\end{pspicture}
\end{center}
\end{figure} \vspace{-0.8cm}\hspace{-0.55cm}
that is the operation of insertion of $X$ in $Y$ by placing the root of $X$ in the interval $0$ of $Y$ and the rest of $X$ in the $i^{\text{th}}$ interval of $Y$. The root of $X \stackrel{\curvearrowright}{(0,i)}Y$ is the root of $X$. 
\end{definition}

Observe that any rooted connected chord diagram can be decomposed canonically using this insertion operation. A share in a chord diagram is formed by two arcs on the outer circle such that if one endpoint of a chord is in the share the other endpoint must also be in the share, e.g.

\begin{figure}[H]

 \begin{center}
 \hspace{3cm}
\begin{pspicture}(0,0)(12.8,3.8)

\psscalebox{1.2}{
\psccurve[showpoints=false,linestyle=dashed,fillcolor=lightgray,fillstyle=solid](5.38,0.6)(5.2,0.81)(5.8,1.9)(6.2,2.7)(6.1,3.4)(6.7,3.2)(7.5,1.8)(6.7,2.1)(6.1,1.8)(5.65,0.65)
}

\psscalebox{1.2}{
\pscircle(1,1.900){1.25}
\psrotate(1,1.9){-75}{\psarc[linecolor=black]{*-*}(3,3.3){2.5}{186}{244}}
\psrotate(1,1.9){-70}{\psarc[linecolor=black]{*-*}(0.1,4){1.5}{263}{323}}
\psrotate(1,1.9){175}{\psarc[linecolor=black]{*-*}(3,3.3){2.5}{186}{244}}
\psarc[linecolor=red]{|-|}(1,1.9){1.35}{210}{265}
\psarc[linecolor=red]{|-|}(1,1.9){1.35}{-5}{90}
\psarc[linecolor=black]{*-*}(-0.9,3.4){1.5}{296}{347}
\pscircle[linecolor=red](0.41,0.81){0.12}
}
\rput(0.1,0.8){$a_-$}
\rput(2.9,3.2){$a_+$}
\rput(4.3,2){\huge$\longleftrightarrow$}

\psscalebox{1.2}{
\pscircle(6,1.900){1.25}
\psrotate(6,1.9){-75}{\psarc[linecolor=black]{*-*}(8,3.3){2.5}{186}{244}}
\psrotate(6,1.9){-70}{\psarc[linecolor=black]{*-*}(5.1,4){1.5}{263}{323}}
\psrotate(6,1.9){175}{\psarc[linecolor=black]{*-*}(8,3.3){2.5}{186}{244}}
\psarc[linecolor=black]{*-*}(4.1,3.4){1.5}{296}{347}
\pscircle[linecolor=red](5.41,0.81){0.12}
}




\end{pspicture}
\end{center}

\end{figure} \vspace{-0.5cm}

\noindent the grey area corresponds to the share of the arcs $(a_-,a_+)$.

\begin{definition}
Let $X$ be a rooted connected chord diagram and $\dot{X}$ the share of $X$ formed on one side by the root and on the other side by what is left of the diagram after deleting consecutively the root-chord of $X$ then the first connected component, for the induced counterclockwise order, of $X$ minus the root-chord. Then the \textbf{root-share decomposition} of $X$ is
\begin{equation*}
X=\dot{X} \stackrel{\curvearrowright}{(0,i)} (X\setminus \dot{X})
\end{equation*}
where $X\setminus \dot{X}$ is the diagram obtained by removing the chords of $\dot{X}$ from $X$ and taking the root to be the second chord in the counterclockwise order of $X$.
\end{definition}

\noindent Here is an example of root-share decomposition:

\begin{figure}[H]

 \begin{center}
 \hspace{3cm}
\begin{pspicture}(0,0)(12.8,3.2)

\psccurve[showpoints=false,linestyle=dashed,fillcolor=lightgray,fillstyle=solid](0.38,0.6)(0.2,0.81)(0.8,1.9)(1.2,2.7)(1.1,3.4)(1.7,3.2)(2.5,1.8)(1.7,2.1)(1.1,1.8)(0.6,0.7)

\pscircle(1,1.900){1.25}
\psrotate(1,1.9){-75}{\psarc[linecolor=black]{*-*}(3,3.3){2.5}{186}{244}}
\psrotate(1,1.9){-70}{\psarc[linecolor=black]{*-*}(0.1,4){1.5}{263}{323}}
\psrotate(1,1.9){175}{\psarc[linecolor=black]{*-*}(3,3.3){2.5}{186}{244}}
\psarc[linecolor=black]{*-*}(-0.9,3.4){1.5}{296}{347}
\pscircle[linecolor=red](0.41,0.81){0.12}
\rput(3.2,1.9){$=$}

\pscircle(5,1.900){1.25}
\psrotate(5,1.9){-75}{\psarc[linecolor=black]{*-*}(7,3.3){2.5}{186}{244}}
\psrotate(5,1.9){-70}{\psarc[linecolor=black]{*-*}(4.1,4){1.5}{263}{323}}
\pscircle[linecolor=red](4.41,0.81){0.12}

\rput(7,1.9){\small$\stackrel{\curvearrowright}{(0,1)}$}

\pscircle(9,1.900){1.25}
\psrotate(9,1.9){175}{\psarc[linecolor=black]{*-*}(11,3.3){2.5}{186}{244}}
\psarc[linecolor=black]{*-*}(7.1,3.4){1.5}{296}{347}
\pscircle[linecolor=red](9.39,0.71){0.12}

\rput(11,1.9){.}

\end{pspicture}
\end{center}

\end{figure} \vspace{-0.5cm}

Then we can use the root-share decomposition to construct planar binary trees from those chord diagrams. Let $\mathcal{PBT}$ denote the set of rooted planar binary trees counted by their number of leaves. We label the edges of those trees from $1$ to $2n-1$  by a preorder traversal starting with $1$ for the root edge. Here are some examples:

\begin{figure}[H]

 \begin{center}
 \vspace{0.5cm}
 \hspace{1cm}
\begin{pspicture}(0,0)(12.8,3.2)

\psscalebox{1.4}{
\rput(2,1.8){
\pstree[treesep=0.5cm,levelsep=0.5cm]{\Tc*{0pt}}
{\pstree[treesep=1.2cm,levelsep=0.5cm]{\Tc*{1.2pt}}{\Tc*{1.2pt} \pstree[treesep=1.2cm,levelsep=0.5cm]{\Tc*{0pt}}{\Tc*{1.2pt}  \Tc*{1.2pt} } } }
}
\pscircle[linecolor=black](1.68,2.07){0.1}
\rput(1.55,2.4){\tiny$1$}
\rput(1.15,1.9){\tiny$2$}
\rput(2.18,1.9){\tiny$3$}
\rput(1.85,1.4){\tiny$4$}
\rput(2.8,1.4){\tiny$5$}
}

\psscalebox{1.4}{
\rput(6,1.8){
\pstree[treesep=0.5cm,levelsep=0.5cm]{\Tc*{0pt}}
{\pstree[treesep=1.2cm,levelsep=0.5cm]{\Tc*{1.2pt}}{\Tc*{1.2pt} \pstree[treesep=1.2cm,levelsep=0.5cm]{\Tc*{0pt}}{\pstree[treesep=0.8cm,levelsep=0.5cm]{\Tc*{0pt}}{\Tc*{1.2pt} \Tc*{1.2pt} }  \Tc*{1.2pt} } } } 
}
\pscircle[linecolor=black](5.69,2.32){0.1}
\rput(5.51,2.6){\tiny$1$}
\rput(5.25,2.2){\tiny$2$}
\rput(6.1,2.2){\tiny$3$}
\rput(5.9,1.75){\tiny$4$}
\rput(5.3,1.15){\tiny$5$}
\rput(6.05,1.15){\tiny$6$}
\rput(6.65,1.75){\tiny$7$}
}

\rput(13,1.95){.}

\end{pspicture}
\end{center}
\end{figure} \vspace{-1.5cm}\hspace{-0.55cm}

\noindent Then we can define the following insertion operation on trees.

\begin{definition}
Let $m,n \in \mathbb{N}^*$ and $0<i\leq 2n-1$. Define the operation $\stackrel{\curvearrowright}{i} : \mathcal{PBT}(m) \times \mathcal{PBT}(n) \longrightarrow \mathcal{PBT}(n+m)$ for all $A\in \mathcal{PBT}(m)$ and $B \in \mathcal{PBT}(n)$  by 

\begin{figure}[H]

 \begin{center}
 \hspace{0.3cm}
\begin{pspicture}(0,0)(12.8,3.2)

\rput(0.2,1.8){$\stackrel{\curvearrowright}{i}$}
\rput(6.5,1.8){$=$}

\psscalebox{1.4}{
\rput(-0.8,1.2){
\pstree[treesep=0.5cm,levelsep=0.5cm]{\Tc*{0pt}}
{\pstree[treesep=1.2cm,levelsep=0.28cm]{\Tc*{1.2pt}}{\pstree{\Ttri{...}}{}}}
}
\pscircle[linecolor=black](-0.8,1.22){0.1}
}

\psscalebox{1.4}{
\rput(2.2,1.4){
\pstree[treesep=0.5cm,levelsep=0.5cm]{\Tc*{0pt}}
{\pstree[linestyle=dashed,treesep=0.6cm,levelsep=0.5cm]{\Tc*{1.2pt}}{\pstree[linestyle=solid,treesep=0.6cm,levelsep=0.5cm]{\Tc*{0pt}}{  \pstree[linestyle=solid,treesep=1.2cm,levelsep=0.5cm]{\Tc*{0pt}}{\Tfan[linestyle=solid]} \pstree[linestyle=solid,treesep=1.2cm,levelsep=0.5cm]{\Tc*{0pt}}{\Tfan[linestyle=solid]}}  \pstree[linestyle=solid,treesep=1.2cm,levelsep=0.5cm]{\Tc*{0pt}}{\Tfan[linestyle=solid]} }  }
}
\pscircle[linecolor=black](2.6,1.9){0.1}
\rput(1.3,1){\tiny$i$}
}

\psscalebox{1.4}{
\rput(5.5,1){
\pstree[treesep=0.5cm,levelsep=0.5cm]{\Tc*{0pt}}
{\pstree[treesep=1.2cm,levelsep=0.28cm]{\Tc*{1.2pt}}{\pstree{\Ttri{...}}{}}}
}
}

\psscalebox{1.4}{
\rput(8.5,1.4){
\pstree[treesep=0.5cm,levelsep=0.5cm]{\Tc*{0pt}}
{\pstree[linestyle=dashed,treesep=0.6cm,levelsep=0.5cm]{\Tc*{1.2pt}}{\pstree[linestyle=solid,treesep=0.6cm,levelsep=0.5cm]{\Tc*{0pt}}{  \pstree[linestyle=solid,treesep=1.2cm,levelsep=0.5cm]{\Tc*{0pt}}{\Tfan[linestyle=solid]} \pstree[linestyle=solid,treesep=1.2cm,levelsep=0.5cm]{\Tc*{0pt}}{\Tfan[linestyle=solid]}}  \pstree[linestyle=solid,treesep=1.2cm,levelsep=0.5cm]{\Tc*{0pt}}{\Tfan[linestyle=solid]} }  }
}
\pscircle[linecolor=black](8.9,1.9){0.1}
\rput(7.8,1){\tiny$i$}
}

\psbezier[linestyle=solid,linecolor=red]{-*}(7.56,2.1)(7.6,4)(7.8,1.5)(10.55,1.6)

\end{pspicture}
\end{center}
\end{figure} \vspace{-0.8cm}\hspace{-0.55cm}
that is the operation of insertion by the root edge of $A$ on the left of the $i^{\text{th}}$ edge in $B$. The root of $A \stackrel{\curvearrowright}{i} B$ is the root of $B$ if $i\neq 1$ or the newly created vertex if $i=1$.
\end{definition}

Matching the insertion operations on the trees and chord diagrams we obtain a correspondence defined recursively.

\begin{definition}
The map $\mathcal{T} : \mathcal{RCCD} \longrightarrow \mathcal{PBT}_{c}$, the set of rooted planar binary trees with leaves colored by the integers, is defined recursively on the chord diagrams labelled in the intersection order by:
\begin{figure}[H]
\begin{pspicture}(0,-1)(8,1)

\rput(-2,0){$\mathcal{T}$}\rput(-1.4,0){\huge $($ }
\rput(-2.1,-1.35){
\psscalebox{0.7}{
\pscircle(2.5,1.900){1.25}
\psline{*-*}(2.5,0.65)(2.5,3.15)
\pscircle[linecolor=red](2.5,0.65){0.12}
\rput(2.5,0.3){$i$} 
}
}
\rput(0.7,0){\huge $)$}
\rput(1,-0.1){$=$}

\rput(2,-0.2){
\pstree[treesep=0.5cm,levelsep=0.7cm]{\Tc*{0pt}}{\pstree[treesep=1.2cm,levelsep=0.5cm]{\Tc*{1.2pt}}{}}
}
\pscircle[linecolor=black](2,-0.3){0.12}
\rput(2,-0.6){\tiny$i$}

\rput(4,-0.1){and}

\rput(8,-0.1){$\mathcal{T}\left(X \stackrel{\curvearrowright}{(0,i)} Y \right) = \mathcal{T}(X) \stackrel{\curvearrowright}{i}  \mathcal{T}(Y)$\,.}

\end{pspicture}
\end{figure}

\end{definition}

It is very important that the $\mathcal{T}$-map is defined for rooted connected chord diagrams with chords labelled in the intersection order, and not the counterclockwise order, as it gives a simple characterisation of the image of $\mathcal{T}$. We will come back to that in the next section.\\

These are all the objects we need to introduce in order to prove the chord diagrams expansion giving the formal solution to the Dyson--Schwinger equation \eqref{DSEintro} however there are still many interesting combinatorial questions, like the statistical distribution of the sequences of gaps, that will need further study. \\

\subsection{Examples}

Next we give some examples of families of rooted connected chord diagrams for which all the objects introduced above are easily computed. 

\begin{example}
One of the simplest family of rooted connected chord diagrams consists in the cycloids. It is an example of family of diagrams which are minimally connected since removing any of the chords (except the first and the last one) result in a disconnected chord diagram. They are the diagrams such that the $i^{\text{th}}$ chord intersects only the $i+1^{\text{th}}$ chord:

\begin{figure}[H]
\begin{center}
\hspace{1.1cm}
\begin{pspicture}(0,0)(12.8,3.2)
\pscircle(-1.5,1.900){1.25} 
\psarc[linecolor=black]{*-*}(-0.7,0.7){1}{66}{181}
\pscircle[linecolor=red](-1.7,0.68){0.12}
\rput(-1.5,0){\small$(Cyc_{1})$}

\rput(0,1.900){,}

\pscircle(1.5,1.900){1.25}
\psarc[linecolor=black]{*-*}(2.3,0.7){1}{66}{181}
\psarc[linecolor=black]{*-*}(3.1,1.8){1}{126}{226}
\pscircle[linecolor=red](1.3,0.68){0.12} 
\rput(1.5,0){\small$(Cyc_{2})$}

\rput(3,1.900){,}

\pscircle(4.5,1.900){1.25} 
\psarc[linecolor=black]{*-*}(5.3,0.7){1}{66}{181}
\psarc[linecolor=black]{*-*}(6.1,1.8){1}{126}{226}
\psarc[linecolor=black]{*-*}(5.4,3.2){1}{184}{287}
\pscircle[linecolor=red](4.3,0.68){0.12} 
\rput(4.5,0){\small$(Cyc_{3})$}

\rput(6,1.900){,}

\pscircle(7.5,1.900){1.25} 
\psarc[linecolor=black]{*-*}(8.3,0.7){1}{66}{181}
\psarc[linecolor=black]{*-*}(9.1,1.8){1}{126}{226}
\psarc[linecolor=black]{*-*}(8.4,3.2){1}{184}{287}
\psarc[linecolor=black]{*-*}(6.8,3.2){1}{242}{354}
\pscircle[linecolor=red](7.3,0.68){0.12} 
\rput(7.5,0){\small$(Cyc_{4})$}

\rput(10,1.900){...}

\psarc[linecolor=black]{*-*}(13,0.5){0.8}{54}{166}
\psarc[linecolor=black]{*-*}(13.9,1.2){0.8}{102}{205}
\psarc[linestyle=dashed]{*-*}(13.9,2.4){0.8}{144}{256}
\psarc[linestyle=dashed]{*-*}(13,3.2){0.8}{188}{312}
\psarc[linestyle=dashed]{*-*}(11.8,3.2){0.8}{241}{355}
\psarc[linecolor=black]{*-*}(11,2.4){0.8}{-69}{32}
\psarc[linestyle=dashed]{-}(12.5,1.900){1.25}{25}{115}
\psarc[linecolor=black]{-}(12.5,1.900){1.25}{120}{379}
\pscircle[linecolor=red](12.23,0.69){0.12}
\rput(12.5,0){\small$(Cyc_{n})$}
\rput(12.2,0.45){\tiny$1$}
\rput(13.25,0.65){\tiny$2$}
\rput(13.6,2.9){\tiny$k$}
\rput(11.6,2.99){\tiny$n$}

\rput(14.2,1.900){...}

\end{pspicture}
\end{center}
\end{figure}

The directed intersection diagram of the cycloid with $n$ chords is simply the line:

\begin{figure}[H]
\begin{center}
\hspace{3.5cm}
\begin{pspicture}(0,0)(12.8,1)
\rput(-1,1){$\mathcal{I}(Cyc_{n})$}
\rput(0.8,1){1}
\psline{->}(1,1)(2,1)
\rput(2.25,1){2}
\psline{->}(2.5,1)(3.5,1)
\rput(3.75,1){3}
\psline{->}(4,1)(5,1)
\rput(5.9,1){ ... ... ...}
\psline{->}(7,1)(8,1)
\rput(8.25,1){$n$}

\rput(9,1){.}

\end{pspicture}
\vspace{-1.3cm}
\end{center}
\end{figure}

In particular its intersection order coincide with the counterclockwise order so we get the identity permutation $\sigma_{Cyc_{n}}=(123...n)$. Moreover there is only one terminal chord, Ter$_{\sigma}(Cyc_{n})=(n)$, which is then the smallest terminal chord of $(Cyc_{n})$. With only one terminal chord the sequence of gaps  is empty, $\delta(X)=\emptyset$. We associate to this chord diagram the sequence with length $n-1$:
\begin{equation*}
\bar{\delta}(Cyc_{n})=(0,0,\,\cdots,0)\,.
\end{equation*}

Finally, it is easy to compute the planar binary tree corresponding to the cycloid with $n$ chords. An easy induction gives that for all $n\in \mathbb{N}^*$ we have:

\vspace{0.2cm}
\begin{figure}[h!]
\begin{center}
\hspace{6cm}
\begin{pspicture}(0,-1)(8,1)
\rput(0,0){$\mathcal{T}(Cyc_{n})=$}
\psscalebox{1.2}{
\rput(3,0.2){
\pstree[treesep=0.5cm,levelsep=0.3cm]{\Tc*{0pt}}
{\pstree[treesep=1.2cm,levelsep=0.5cm]{\Tc*{1.2pt}}{\Tc*{1.2pt} \pstree[treesep=1.2cm,levelsep=0.5cm]{\Tc*{0pt}}{\Tc*{1.2pt} \psset{linestyle=dashed} \pstree[treesep=1.2cm,levelsep=0.5cm]{\Tc*{0pt}}{\Tc*{1.2pt}  \pstree[treesep=1.2cm,levelsep=0.5cm]{\Tc*{0pt}}{  \psset{linestyle=solid} \Tc*{1.2pt} \Tc*{1.2pt}}}}}}
}
\pscircle[linecolor=black](2.06,1.07){0.1}
\rput(1.44,0.4){\tiny$1$}
\rput(2.05,-0.1){\tiny$2$}
\rput(2.68,-0.6){\tiny$k$}
\rput(3.28,-1.13){\tiny$n-1$}
\rput(4.58,-1.13){\tiny$n$}
\rput(5.3,-0.1){.}
}
\end{pspicture}
\end{center}
\end{figure}

\end{example}

\begin{example}
Another very simple family of diagrams is given by the wheels with $n$ spokes. If the cycloids formed a family of minimally connected diagrams the wheel spokes are then maximally connected since removing any chord leaves us with a connected chord diagram.

\begin{figure}[H]
\begin{center}
\hspace{1.1cm}
\begin{pspicture}(0,0)(12.8,3.2)
\pscircle(-1.5,1.900){1.25} 
\psline{*-*}(-1.5,0.65)(-1.5,3.15)
\pscircle[linecolor=red](-1.5,0.65){0.12}
\rput(-1.5,0){\small$(W_{1})$}

\rput(0,1.900){,}

\pscircle(1.5,1.900){1.25}
\psline{*-*}(1.5,0.65)(1.5,3.15)
\psline{*-*}(0.25,1.900)(2.75,1.900)
\pscircle[linecolor=red](1.5,0.65){0.12}
\rput(1.5,0){\small$(W_{2})$}

\rput(3,1.900){,}

\pscircle(4.5,1.900){1.25} 
\psline{*-*}(4.5,0.65)(4.5,3.15)
\psrotate(4.5,1.9){45}{\psline{*-*}(3.25,1.900)(5.75,1.900)}
\psrotate(4.5,1.9){-45}{\psline{*-*}(3.25,1.900)(5.75,1.900)}
\pscircle[linecolor=red](4.5,0.65){0.12}
\rput(4.5,0){\small$(W_{3})$}

\rput(6,1.900){,}

\pscircle(7.5,1.900){1.25} 
\psline{*-*}(7.5,0.65)(7.5,3.15)
\psline{*-*}(6.25,1.900)(8.75,1.900)
\psrotate(7.5,1.9){45}{\psline{*-*}(6.25,1.900)(8.75,1.900)}
\psrotate(7.5,1.9){-45}{\psline{*-*}(6.25,1.900)(8.75,1.900)}
\pscircle[linecolor=red](7.5,0.65){0.12}
\rput(7.5,0){\small$(W_{4})$}

\rput(10,1.900){...}

\psarc[linestyle=dashed]{-}(12.5,1.900){1.25}{-25}{65}
\psarc[linestyle=dashed]{-}(12.5,1.900){1.25}{160}{230}
\psarc[linecolor=black]{-}(12.5,1.900){1.25}{60}{160}
\psarc[linecolor=black]{-}(12.5,1.900){1.25}{-123}{-30}
\pscircle[linecolor=red](12.5,0.65){0.12}
\psline{*-*}(12.5,0.65)(12.5,3.15)
\psrotate(12.5,1.9){35}{\psline[linestyle=dashed]{*-*}(11.25,1.900)(13.75,1.900)}
\psrotate(12.5,1.9){-60}{\psline[linestyle=solid]{*-*}(11.25,1.900)(13.75,1.900)}
\psrotate(12.5,1.9){60}{\psline[linestyle=solid]{*-*}(11.25,1.900)(13.75,1.900)}
\rput(12.5,0){\small$(W_{n})$}
\rput(12.5,0.41){\tiny$1$}
\rput(13.25,0.65){\tiny$2$}
\rput(13.71,2.75){\tiny$k$}
\rput(13.3,3.1){\tiny$n$}

\rput(14.2,1.900){...}

\end{pspicture}
\end{center}
\end{figure}

Any pair of chords intersects in $(W_{n})$ so its intersection diagram is a directed version of the complete graph on $n$ vertices:

\begin{figure}[H]
\begin{center}
\hspace{3cm}
\begin{pspicture}(0,0)(12.8,1.5)
\rput(-1,1){$\mathcal{I}(W_{n})$}
\rput(0.8,1){1}
\psline{->}(1,1)(2,1)
\psbezier{->}(0.85,1.15)(1.5,1.5)(1.5,1.5)(3.65,1.2)
\psbezier[linestyle=dashed]{->}(0.85,1.2)(2,1.8)(2,1.8)(5.9,1.1)
\psbezier[linestyle=solid]{->}(0.85,1.2)(2.1,2)(2.1,2)(8.25,1.2)
\psbezier[linestyle=dashed]{->}(2.28,0.8)(3.5,0.55)(3.5,0.55)(5.9,0.9)
\psbezier[linestyle=solid]{->}(2.28,0.8)(3.5,-0.15)(3.5,-0.15)(8.2,0.85)
\psbezier[linestyle=solid]{->}(3.72,0.8)(4.5,0.3)(4.5,0.3)(8.15,0.95)
\rput(2.25,1){2}
\psline{->}(2.5,1)(3.5,1)
\rput(3.75,1){3}
\psline{->}(4,1)(5,1)
\rput(5.9,1){ ... ... ...}
\psline{->}(7,1)(8,1)
\rput(8.25,1){$n$}

\rput(9,1){.}

\end{pspicture}
\end{center}
\end{figure}

Once more the counterclockwise order coincides with the intersection order and we get the identity permutation $\sigma_{W_{n}}=(123...n)$. Also there is only one terminal chord, Ter$_{\sigma}(W_{n})=( n )$ which is the smallest terminal chord of $W_{n}$. With only one terminal chord the sequence of gaps is empty $\delta(X)=\emptyset$ and we associate to this chord diagram the sequence with length $n-1$:
\begin{equation*}
\bar{\delta}(W_{n})=(0,0,\, \cdots ,0)\,.
\end{equation*} 

It is also easy to compute the planar binary tree corresponding to the wheel spoke with $n$ chords. An induction an the number of chords gives for all $n \in \mathbb{N}^*$ 

\vspace{0.2cm}
\begin{figure}[H]
\begin{center}
\hspace{6cm}
\begin{pspicture}(0,-1)(8,1)
\rput(0,0){$\mathcal{T}(W_{n})=$}
\psscalebox{1.2}{
\rput(3,0.2){
\pstree[treesep=0.5cm,levelsep=0.3cm]{\Tc*{0pt}}{\pstree[treesep=1.2cm,levelsep=0.5cm]{\Tc*{1.2pt}}{\pstree[treesep=1.2cm,levelsep=0.5cm]{\Tc*{0pt}}{ \pstree[treesep=1.2cm,levelsep=0.5cm]{ \psset{linestyle=dashed} \Tc*{0pt}}{\psset{linestyle=dashed} \pstree[treesep=1.2cm,levelsep=0.5cm]{\Tc*{0pt}}{\psset{linestyle=solid} \Tc*{1.2pt} \Tc*{1.2pt}} \Tc*{1.2pt}} \Tc*{1.2pt}} \Tc*{1.2pt}}}
}
\pscircle[linecolor=black](3.94,1.07){0.1}
\rput(1.44,-1.1){\tiny$1$}
\rput(2.74,-1.1){\tiny$2$}
\rput(3.32,-0.6){\tiny$k$}
\rput(4.01,-0.1){\tiny$n-1$}
\rput(4.58,0.4){\tiny$n$}
\rput(5.3,-0.1){.}
}
\end{pspicture}
\end{center}
\end{figure}

\end{example}

\begin{example}

We can also construct the family of ladders which maximize the number of terminal chords one can get for a connected chord diagram with a fixed number of chords: 

\begin{figure}[H]
\begin{center}
\hspace{1.1cm}
\begin{pspicture}(0,0)(12.8,3.2)
\pscircle(-1.5,1.900){1.25} 
\psline{*-*}(-1.5,0.65)(-1.5,3.15)
\pscircle[linecolor=red](-1.5,0.65){0.12}
\rput(-1.5,0){\small$(L_{1})$}

\rput(0,1.900){,}

\pscircle(1.5,1.900){1.25}
\psline{*-*}(1.5,0.65)(1.5,3.15)
\psline{*-*}(0.25,1.900)(2.75,1.900)
\pscircle[linecolor=red](1.5,0.65){0.12}
\rput(1.5,0){\small$(L_{2})$}

\rput(3,1.900){,}

\pscircle(4.5,1.900){1.25} 
\psline{*-*}(4.5,0.65)(4.5,3.15)
\psline{*-*}(3.32,1.5)(5.68,1.5)
\psline{*-*}(3.32,2.35)(5.68,2.35)
\pscircle[linecolor=red](4.5,0.65){0.12}
\rput(4.5,0){\small$(L_{3})$}

\rput(6,1.900){,}

\pscircle(7.5,1.900){1.25} 
\psline{*-*}(7.5,0.65)(7.5,3.15)
\psline{*-*}(6.25,1.900)(8.75,1.900)
\psline{*-*}(6.4,1.3)(8.61,1.3)
\psline{*-*}(6.4,2.5)(8.61,2.5)
\pscircle[linecolor=red](7.5,0.65){0.12}
\rput(7.5,0){\small$(L_{4})$}

\rput(10,1.900){...}

\psarc[linestyle=dashed]{-}(12.5,1.900){1.25}{140}{220}
\psarc[linestyle=dashed]{-}(12.5,1.900){1.25}{320}{400}
\psarc[linecolor=black]{-}(12.5,1.900){1.25}{40}{140}
\psarc[linecolor=black]{-}(12.5,1.900){1.25}{220}{320}
\pscircle[linecolor=red](12.5,0.65){0.12}
\psline{*-*}(12.5,0.65)(12.5,3.15)
\psline{*-*}(11.56,2.7)(13.45,2.7)
\psline[linestyle=dashed]{*-*}(11.28,2.1)(13.72,2.1)
\psline[linestyle=dashed]{*-*}(11.28,1.7)(13.72,1.7)
\psline{*-*}(11.56,1.1)(13.45,1.1)
\rput(12.5,0){\small$(L_{n})$}
\rput(12.5,0.39){\tiny$1$}
\rput(13.5,0.90){\tiny$2$}
\rput(13.9,2.1){\tiny$k$}
\rput(13.65,2.8){\tiny$n$}

\rput(14.2,1.900){...}

\end{pspicture}
\end{center}
\end{figure}

In $L_{n}$ the first chord intersects all the other ones so its intersection diagram has one edge $(1,k)$ for each $k \in \lbb 2,n \rbb$:

\begin{figure}[H]
\begin{center}
\hspace{3cm}
\begin{pspicture}(0,0)(12.8,1.5)
\rput(-1,1){$\mathcal{I}(L_{n})$}
\rput(0.8,1){1}
\psline{->}(1,1)(2,1)
\psbezier{->}(0.85,1.1)(2.5,1.5)(2.5,1.5)(3.75,1.2)
\rput(2.25,1){2}
\psbezier{->}(0.8,1.2)(3,1.8)(3,1.8)(6.5,1.2)
\rput(3.75,1){3}
\rput(4.9,1){ ... ... }
\psbezier[linestyle=dashed]{->}(0.95,0.85)(2.5,0.5)(2.5,0.5)(4.9,0.8)
\rput(6.5,1){$n-1$}
\psbezier{->}(0.85,0.8)(3,0.1)(3,0.1)(8.25,0.8)
\rput(8.25,1){$n$}

\rput(9,1){.}

\end{pspicture}
\end{center}
\end{figure}

So in $L_{n}$ we have $n-1$ terminal chords, Ter$(L_{n})=\{2,3,...,n\}$ and the gap between any two consecutive chords is $1$ hence $\delta(L_{n})=(1,1,...,1)$. Once more the counterclockwise order coincide with the intersection order so $\sigma_{L_{n}}=(123....n)$.\\

We can compute inductively the planar binary tree corresponding to $L_{n}$. For all $n \in \mathbb{N}^*$ we have:

\vspace{0.2cm}
\begin{figure}[H]
\begin{center}
\hspace{6cm}
\begin{pspicture}(0,-1)(8,1)
\rput(0,0){$\mathcal{T}(L_{n})=$}
\psscalebox{1.2}{
\rput(3,0.2){
\pstree[treesep=0.5cm,levelsep=0.3cm]{\Tc*{0pt}}{\pstree[treesep=1.2cm,levelsep=0.5cm]{\Tc*{1.2pt}}{\pstree[treesep=1.2cm,levelsep=0.5cm]{\Tc*{0pt}}{ \pstree[treesep=1.2cm,levelsep=0.5cm]{ \psset{linestyle=dashed} \Tc*{0pt}}{\psset{linestyle=dashed} \pstree[treesep=1.2cm,levelsep=0.5cm]{\Tc*{0pt}}{\psset{linestyle=solid} \Tc*{1.2pt} \Tc*{1.2pt}} \Tc*{1.2pt}} \Tc*{1.2pt}} \Tc*{1.2pt}}}
}
\pscircle[linecolor=black](3.94,1.07){0.1}
\rput(1.44,-1.1){\tiny$1$}
\rput(2.74,-1.1){\tiny$n$}
\rput(3.32,-0.6){\tiny$k$}
\rput(4.01,-0.1){\tiny$3$}
\rput(4.58,0.4){\tiny$2$}
\rput(5.3,-0.1){.}
}
\end{pspicture}
\end{center}
\end{figure}

\end{example}

\begin{example}
Here is a family of rooted connected chord diagrams that shows we can get any sequence of gaps and also illustrates the difference between the counterclockwise order and the intersection order:

\begin{figure}[H]

 \begin{center}
 \hspace{3.0cm}
\begin{pspicture}(0,0)(12.8,3.5)
\psscalebox{1}{
\pscircle(1,1.900){2} 
\psrotate(1,1.9){90}{\psline{*-*}(-1,1.9)(3,1.9)}
\psrotate(1,1.9){160}{\psline{*-*}(-1,1.9)(3,1.9)}
\psrotate(1,1.9){205}{\psline{*-*}(-1,1.9)(3,1.9)}
\psarc[linecolor=black]{*-*}(0.7,4.3){0.9}{225}{330} 
\psarc[linecolor=black]{*-*}(-0.3,4){0.8}{255}{348} 
\psarc[linecolor=black]{*-*}(-1.2,2.4){0.6}{-70}{45} 
\psarc[linecolor=black]{*-*}(-1,0.8){0.6}{-27}{86} 
\psarc[linecolor=black]{*-*}(-0.65,0.22){0.6}{-4}{94} 
\psrotate(1,1.9){20}{\psarc[linecolor=black]{*-*}(-0.65,0.22){0.6}{-4}{94} }
\psrotate(1,1.9){90}{\pscircle[linecolor=red](-1,1.9){0.12}}
\rput(1,-0.35){\tiny$1$}
\rput(3,1.05){\tiny$2$}
\rput(3.05,2.9){\tiny$3$}
\rput(1.55,4){\tiny$4$}
\rput(0.35,4){\tiny$5$}
\rput(-1,2.9){\tiny$6$}
\rput(-1.15,1.45){\tiny$7$}
\rput(-0.9,0.8){\tiny$8$}
\rput(-0.4,0.2){\tiny$9$}
\rput(1,-0.95){\small$CW_{3}(2,1,3)$}

\psarc[linestyle=solid]{-}(8,1.9){2}{205}{335}
\psarc[linestyle=dashed]{-}(8,1.9){2}{-25}{35}
\psarc[linestyle=solid]{-}(8,1.9){2}{39}{160}
\psarc[linestyle=dashed]{-}(8,1.9){2}{163}{203}
\psline{*-*}(8,-0.1)(8,3.9)
\psrotate(8,1.9){55}{\psline{*-*}(8,-0.1)(8,3.9)}
\psrotate(8,1.9){135}{\psline{*-*}(8,-0.1)(8,3.9)}
\psarc[linestyle=dashed,linecolor=blue]{*-*}(7.8,4.4){0.8}{230}{320}
\psarc[linestyle=dashed,linecolor=blue]{*-*}(7.1,4.1){0.55}{250}{335}
\psrotate(8,1.9){60}{
\psarc[linestyle=dashed,linecolor=blue]{*-*}(7.8,4.4){0.8}{230}{320}
\psarc[linestyle=dashed,linecolor=blue]{*-*}(7.1,4.1){0.55}{250}{335}
}
\psrotate(8,1.9){135}{
\psarc[linestyle=dashed,linecolor=blue]{*-*}(7.8,4.4){0.8}{230}{320}
\psarc[linestyle=dashed,linecolor=blue]{*-*}(7.1,4.1){0.55}{250}{335}
}
\pscircle[linecolor=red](8,-0.1){0.12}
\rput(8,-0.35){\tiny$1$}
\rput(9.7,0.55){\tiny$2$}
\rput(9.6,3.45){\tiny$n$}
\rput(7.5,4.15){\tiny$B_{1}$}
\rput(5.75,2.45){\tiny$B_{2}$}
\rput(6.75,-0.05){\tiny$B_{n}$}

\rput(8,-0.95){\small$CW_{n}(\beta_{1},\beta_{2},...,\beta_{n})$}

\rput(12,1.9){.}

}
\end{pspicture}
\end{center}
\vspace{0.5cm}

\end{figure} \hspace{0.5cm}

Here $CW_{n}(\beta_{1},...,\beta_{n})$ is formed by a wheel spoke with $n$ chords such that the target of the $k^{\text{th}}$ spoke lies in the first interval of a block $B_{k}$ consisting of cycloid with $\beta_{k}$ chords.

The intersection diagram of $CW_{n}(\beta_{1},...,\beta_{n})$ consists of two main parts mixing what we already know for the wheels and for the cycloids. First, the vertices from $1$ to $n$ form a directed version of the complete graph on $n$ vertices then we have $n$ disjoint lines corresponding to the cycloids $B_{1}$ to $B_{n}$ and for each $k \in \lbb 1,n \rbb$ an edge directed from the $k^{\text{th}}$ vertex to the initial vertex of the block $\mathcal{I}(B_{k})$.\\




The terminal chords of $CW_{n}(\beta_{1},...,\beta_{n})$ are the last chords of each of the $n$ cycloids. This time the intersection order does not corresponds to the counterclockwise order. The first $n$ chords are in a wheel spokes configuration so their counterclockwise order is preserved. Inside each block $B_{k}$, since we have cycloids, the counterclockwise order is preserved. However the first spoke lies in the block $B_{1}$, so in the intersection order this is the last block. The second spoke lies in $B_{k}$ which becomes the next to last block, and so on. We end up with the permutation:
\begin{equation*}
\sigma_{CW_{n}}=(1...n\,B_{n}\,...B_{2}\,B_{1})\,.
\end{equation*}
\noindent In particular we obtain $CW_{3}(2,1,3)$ in the intersection order:

\begin{figure}[H]

 \begin{center}
 \hspace{3.0cm}
\begin{pspicture}(-1,0)(12.8,3.5)

\rput(5,0){
\psscalebox{1}{
\pscircle(1,1.900){2} 
\psrotate(1,1.9){90}{\psline{*-*}(-1,1.9)(3,1.9)}
\psrotate(1,1.9){160}{\psline{*-*}(-1,1.9)(3,1.9)}
\psrotate(1,1.9){205}{\psline{*-*}(-1,1.9)(3,1.9)}
\psarc[linecolor=black]{*-*}(0.7,4.3){0.9}{225}{330} 
\psarc[linecolor=black]{*-*}(-0.3,4){0.8}{255}{348} 
\psarc[linecolor=black]{*-*}(-1.2,2.4){0.6}{-70}{45} 
\psarc[linecolor=black]{*-*}(-1,0.8){0.6}{-27}{86} 
\psarc[linecolor=black]{*-*}(-0.65,0.22){0.6}{-4}{94} 
\psrotate(1,1.9){20}{\psarc[linecolor=black]{*-*}(-0.65,0.22){0.6}{-4}{94} }
\psrotate(1,1.9){90}{\pscircle[linecolor=red](-1,1.9){0.12}}
\rput(1,-0.35){\tiny$1$}
\rput(3,1.05){\tiny$2$}
\rput(3.05,2.9){\tiny$3$}
\rput(1.55,4){\tiny$8$}
\rput(0.35,4){\tiny$9$}
\rput(-1,2.9){\tiny$7$}
\rput(-1.15,1.45){\tiny$4$}
\rput(-0.9,0.8){\tiny$5$}
\rput(-0.4,0.2){\tiny$6$}
\rput(1,-0.95){\small$CW_{3}(2,1,3)$}

\rput(7,1.9){.}

}
}
\end{pspicture}
\end{center}
\vspace{0.5cm}

\end{figure} \hspace{0.5cm}

\noindent The sequence of gaps is then the list of the differences between the consecutive terminal chords of the cycloid blocks in their intersection order:
\begin{equation*}
\delta(CW_{n}(\beta_{1},...,\beta_{n}))=\left(\sigma_{CW_{n}}(\beta_{n-1})-\sigma_{CW_{n}}(\beta_{n}],...,\sigma_{CW_{n}}(\beta_{1})-\sigma_{CW_{n}}(\beta_{2}) \right)\,.
\end{equation*}

\noindent A simple induction gives the corresponding planar binary tree. For all $n \in \mathbb{N}^*$ we have:

\vspace{0.7cm}
\begin{figure}[H]
\begin{center}
\hspace{6cm}
\begin{pspicture}(0,-1.5)(8,1)
\rput(-2.5,0){
\rput(0,0){$\mathcal{T}(W_{n})=$}
\psscalebox{1.1}{
\rput(3.5,0.25){
\pstree[treesep=0.5cm,levelsep=0.3cm]{\Tc*{0pt}}{\pstree[treesep=0.5cm,levelsep=0.8cm]{\Tc*{1.2pt}}{\pstree[treesep=0.5cm,levelsep=0.8cm]{\Tc*{0pt}}{ \pstree[treesep=0.5cm,levelsep=0.8cm]{ \psset{linestyle=dashed} \Tc*{0pt}}{\psset{linestyle=dashed} \pstree[treesep=0.5cm,levelsep=0.5cm]{\Tc*{0pt}}{\psset{linestyle=solid} \Tc*{1.2pt} \Tr{\tiny$\mathcal{T}(B_{1})$}} \pstree[treesep=0.5cm,levelsep=0.5cm]{\Tc*{1.2pt}}{\Tc*{1.2pt} \Tr{\tiny$\mathcal{T}(B_{k})$}}} \pstree[treesep=0.5cm,levelsep=0.5cm]{\Tc*{1.2pt}}{\Tc*{1.2pt} \Tr{\tiny$\mathcal{T}(B_{n-1})$}}} \pstree[treesep=0.5cm,levelsep=0.5cm]{\Tc*{1.2pt}}{\Tc*{1.2pt} \Tr{\tiny$\mathcal{T}(B_{n})$}}}}
}
\pscircle[linecolor=black](4.3,1.6){0.1}
\rput(0.79,-1.5){\tiny$1$}
\rput(2.65,-1.5){\tiny$k$}
\rput(3.8,-0.65){\tiny$n-1$}
\rput(4.88,0.1){\tiny$n$}
\rput(7.3,-0.1){,}
}
}
\end{pspicture}
\end{center}
\end{figure}
\noindent where $\mathcal{T}(B_{k})$ corresponds to the tree of the $k^{th}$ cycloid block.

\end{example}

\section{Recurrences}\label{rec section}

In this section we will use the constructions above to prove some technical lemmas and then the main theorem.

\subsection{A refinement of a recurrence of Stein}

Let $c_n = |\mathcal{RCCD}(n)|$.
A classical recurrence for $c_n$ is 
\[
c_n = (n-1)\sum_{k=1}^{n-1}c_kc_{n-k} \quad \text{for $n\geq 2$} \qquad c_1=1
\]
due to Stein \cite{Schord}.  Nijenhuis and Wilf \cite{NWchord} give a proof of this recurrence using the root-share decomposition.  What their proof naturally gives is the equivalent recurrence
\[
  c_n = \sum_{k=1}^{n-1}(2k-1)c_k c_{n-k} \quad \text{for $n\geq 2$} \qquad c_1=1
\]

 By following essentially the same proof while keeping track of the terminal chords we get the following 
\begin{prop}\label{gamma k in g}
Let
\[
  g_i = \sum_{\substack{C\in\mathcal{RCCD}\\b(C)\geq i}}x^{|C|}f_{C}f_{b(C)-i}
\]
then
\[
   g_k = g_1 \left(2x\frac{d}{dx}-1\right)g_{k-1} \quad \text{for $k\geq 2$} 
\]
\end{prop}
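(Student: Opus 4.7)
The plan is to use the root-share decomposition to set up a bijection between $\{C \in \mathcal{RCCD}: |C| \geq 2\}$ and triples $(C_1, C_2, i)$ with $C_1, C_2 \in \mathcal{RCCD}$ and $1 \leq i \leq 2|C_2|-1$, realized by $C = C_1 \stackrel{\curvearrowright}{(0,i)} C_2$; this is exactly the Nijenhuis--Wilf bijection underlying the Stein recurrence. Expanding
\[
g_1 \Bigl(2x\tfrac{d}{dx}-1\Bigr) g_{k-1} \;=\; \sum_{\substack{(C_1,C_2,i) \\ b(C_2) \geq k-1}} x^{|C_1|+|C_2|} \, f_{C_1} f_{b(C_1)-1} \, f_{C_2} f_{b(C_2)-(k-1)},
\]
where the weight $2|C_2|-1$ is absorbed into the sum over $i$, and comparing with the target $g_k = \sum_{C:\, b(C) \geq k} x^{|C|} f_C f_{b(C)-k}$, it suffices to prove for every $C = C_1 \stackrel{\curvearrowright}{(0,i)} C_2$ the two identities $b(C) = b(C_2)+1$ (which handles both the constraint mismatch $b(C_2) \geq k-1$ versus $b(C) \geq k$ and the factor $f_{b(C)-k}=f_{b(C_2)-(k-1)}$) and $f_C = f_{C_1} f_{C_2} f_{b(C_1)-1}$.

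The core step is a structural lemma about the intersection order of $C$: position $1$ is the root chord of $C_1$; positions $2,\ldots,|C_2|+1$ carry the chords of $C_2$ in the same order as $\sigma_{C_2}$; and positions $|C_2|+2,\ldots,|C|$ carry the non-root chords of $C_1$ in the order inherited from $\sigma_{C_1}$. I would prove this by applying the recursive definition of the intersection order directly to $C$. The geometric input is that the non-root chords of $C_1$ sit entirely inside interval $i$ of $C_2$ and thus cross no $C_2$-chord, so the induced subgraph of $\mathcal{I}(C)$ on the $C_2$-chords is exactly $\mathcal{I}(C_2)$ and its restriction to the $C_1$-chords is $\mathcal{I}(C_1)$. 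Removing the edges out of chord $1$ of $C$ therefore yields three kinds of components: $\{1\}$, the single component formed by all of $C_2$'s chords (since $\mathcal{I}(C_2)$ is connected), and the non-root components that appear at step 1 of $C_1$'s own intersection order. The smallest CCW-label in $C$ of a $C_2$-chord is $2$, whereas for any non-root $C_1$-component it is at least $i+2 \geq 3$, which orders them in step 2 as claimed; the recursion of step 3 then reproduces $\sigma_{C_2}$ and $\sigma_{C_1}$ restricted to positions $\geq 2$.

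Both identities then follow by bookkeeping. A chord of $C_2$ is terminal in $C$ iff it is terminal in $C_2$ (its only possible outgoing edges in $\mathcal{I}_{\sigma}(C)$ go to other $C_2$-chords), and similarly a non-root chord of $C_1$ is terminal in $C$ iff it is terminal in $C_1$. Chord $1$ of $C$ itself is never terminal, because connectivity of $C$ forces it to intersect some chord at higher intersection position (a $C_2$-chord when $|C_1|=1$, a $C_1$-chord when $|C_1|>1$). Hence $b(C) = b(C_2)+1$. For $f_C$, the terminal chords of $C$ in intersection order are the $C_2$-terminals (shifted by $1$) followed, when $|C_1|>1$, by the $C_1$-terminals (shifted by $|C_2|$); since the largest terminal chord of any diagram $D$ sits at intersection position $|D|$, the gap between the last $C_2$-terminal and the first $C_1$-terminal is exactly $b(C_1)-1$, so $\delta(C) = \delta(C_2) \cup \{b(C_1)-1\} \cup \delta(C_1)$. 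When $|C_1|=1$ there are no $C_1$-terminals to add, but $f_{C_1}=1$ and $f_{b(C_1)-1}=f_0$ supplies exactly the extra $f_0$-factor required by $f_C = f_0^{|C|-t_C} \prod_j f_{\delta_j(C)}$. Matching exponents of $f_0$ and indices of the remaining $f_j$'s in each case then gives $f_C = f_{C_1} f_{C_2} f_{b(C_1)-1}$.

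The main obstacle is the structural lemma, in particular the component analysis and the CCW-label comparison at step 1 of the recursion; once that is in place, everything else is multiset arithmetic on terminal chords, and summing the reindexed right-hand side over the root-share bijection finishes the proof.
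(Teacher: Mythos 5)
Your proposal is correct and follows essentially the same route as the paper: the root-share decomposition supplies the bijection with triples $(C_1,C_2,i)$ whose $2|C_2|-1$ insertion slots account for the operator $2x\frac{d}{dx}-1$, and the two identities $b(C)=b(C_2)+1$ and $f_C=f_{C_1}f_{C_2}f_{b(C_1)-1}$ are exactly the paper's Lemma~\ref{first rec claim}, proved there from the same description of how the intersection order of $C_1 \stackrel{\curvearrowright}{(0,i)} C_2$ splices together those of $C_1$ and $C_2$. Your explicit treatment of the $|C_1|=1$ case (where the root chord of $C_1$ ceases to be terminal and the factor $f_{b(C_1)-1}=f_0$ absorbs the discrepancy) is if anything slightly more careful than the paper's wording.
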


Before proving the recurrence lets see how it refines the classic chord diagram recurrence.  Note that the lowest power of $x$ appearing in $g_k$ is $k$.  Write
\[
  g_k = \sum_{i\geq k}g_{k,i}x^i
\]
so
\[
g_{k,i} = \sum_{\substack{C\\|C|=i\\b(C)\geq i}}f_{C}f_{b(C)-i}
\]
Then Proposition \ref{gamma k in g} is the statement
\begin{equation}\label{g rec spread out}
  g_{k,i} = \sum_{\ell=1}^{i-1}(2\ell-1)g_{1,i-\ell}g_{k-1,\ell} \quad \text{for $2\leq k\leq i$}
\end{equation}
which has the same form as the classic chord diagram recurrence, but with the $g_{k,i}$ rather than simple counts.

The key to the proof is the behaviour of $\bar{\delta}$ in the root-share decomposition.  This is encapsulated in the following lemma.

\begin{lemma}\label{first rec claim}

  Let $C_1$ and $C_2$ be rooted connected chord diagrams, and take $1 \leq m \leq 2|C_2|-1$.
  Let
  \[
  C=C_1 \stackrel{\curvearrowright}{(0,m)}C_2.
  \]
Then for any $k$ for which all terms are defined we have
\[
f_{C}f_{b(C)-k} = f_{C_1}f_{b(C_1)-1}f_{C_2}f_{b(C_2)-k+1}
\]
\end{lemma}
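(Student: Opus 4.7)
The plan is to analyze how $\sigma_C$ and the terminal chords of $C$ decompose under the insertion $C=C_1\stackrel{\curvearrowright}{(0,m)}C_2$, deduce the corresponding decomposition of $\delta(C)$ and $b(C)$, and then read off the identity by matching factors on both sides.

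I would start by observing that the chords of $C_1$ other than its root lie entirely inside interval $m$ of $C_2$, so they cross no chord of $C_2$; only the root chord of $C_1$, which spans from interval $0$ to interval $m$ of $C_2$, can intersect chords of $C_2$. Hence $\mathcal{I}(C)$ is the disjoint union $\mathcal{I}(C_1)\sqcup\mathcal{I}(C_2)$ together with some edges from vertex $1$ of $C$ (the root of $C_1$) to certain chords of $C_2$. Because $m\geq 1$, in the counterclockwise order of $C$ the root of $C_2$ sits at label $2$ while every non-root chord of $C_1$ has counterclockwise label at least $3$; within each of the two pieces the relative counterclockwise order in $C$ agrees with that in $C_1$ or $C_2$.

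Next I would unfold the recursive definition of $\sigma_C$. After deleting the outgoing edges of vertex $1$, the remaining graph splits into $\{1\}$, one component containing all of $C_2$ (which is connected), and whatever components $C_1\setminus\{\text{root}\}$ inherits from $\mathcal{I}(C_1)$. By the counterclockwise observation above, the $C_2$-component is ranked just after $\{1\}$, and the recursion applied on either packet is identical to the procedure defining $\sigma_{C_2}$, respectively $\sigma_{C_1}$ once restricted to $C_1\setminus\{\text{root}\}$. This yields
\[
\sigma_C=\bigl(1,\ \sigma_{C_2}\text{-ordered chords of }C_2,\ \sigma_{C_1}\text{-ordered chords of }C_1\setminus\{\text{root}\}\bigr),
\]
so an intersection-order label $s$ in $C_2$ becomes $s+1$ in $C$ and an intersection-order label $s\geq 2$ in $C_1$ becomes $|C_2|+s$ in $C$. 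Since the only inter-piece intersections involve vertex $1$, which carries the smallest intersection-order label of $C$, a chord of $C_2$ (respectively a non-root chord of $C_1$) is terminal in $C$ if and only if it is terminal in $C_2$ (respectively in $C_1$); and the root of $C_1$ is not terminal in $C$ since $|C|\geq 2$ and $C$ is connected.

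Listing the terminals of $C$ in increasing intersection-order therefore yields the terminals of $C_2$ (shifted by $+1$) followed by the non-root terminals of $C_1$ (shifted by $|C_2|$), so $b(C)=b(C_2)+1$ and $\delta(C)$ is the concatenation of $\delta(C_2)$, a single bridging gap $g$, and $\delta(C_1)$. The crucial simplification is that the chord with the largest intersection-order label in any rooted connected chord diagram is automatically terminal, so the largest intersection-order label of a terminal chord of $C_2$ equals $|C_2|$, giving $g=b(C_1)-1$. Collecting $f_0$-exponents and $\delta$-factors then yields $f_C=f_{C_1}\,f_{C_2}\,f_{b(C_1)-1}$, and multiplying by $f_{b(C)-k}=f_{b(C_2)-k+1}$ gives the stated identity. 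The boundary case $|C_1|=1$ is absorbed automatically: the root of $C_1$ is terminal in $C_1$ but not in $C$, producing one extra $f_0$ in $f_C$ exactly matched by $f_{b(C_1)-1}=f_0$. The main obstacle is the structural claim about $\sigma_C$; pictorially it is essentially forced once one realises the two pieces interact only through the root of $C_1$, but checking it requires careful bookkeeping of counterclockwise positions of endpoints in $C$ and of how the recursive procedure descends into each component.
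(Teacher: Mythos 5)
Your proof is correct and follows essentially the same route as the paper's: both track how the intersection order and the terminal chords decompose under the insertion $C=C_1 \stackrel{\curvearrowright}{(0,m)} C_2$, deduce $b(C)=b(C_2)+1$ and that $\bar{\delta}(C)$ is the concatenation of $\bar{\delta}(C_2)$, a single bridging gap, and $\bar{\delta}(C_1)$, and then read off the identity. Your value $b(C_1)-1$ for the bridging gap is the one the lemma statement requires (and your explicit handling of the $|C_1|=1$ boundary case and of the structural claim about $\sigma_C$ supplies detail the paper only asserts), whereas the paper's displayed intermediate value $b(C_1)-2$ contains an off-by-one slip.
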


\begin{proof}
The terminal chords of $C$ are the terminal chords of $C_1$ along with the terminal chords of $C_2$.  Furthermore, if $c$ is a chord in $C_2$ with index $i$ in the intersection order, then $c$ has index $i+1$ in the intersection order in $C$, and if $c$ is a terminal chord in $C_1$ with index $j$ in the intersection order then $c$ has index $j+|C_2|$ in the intersection order in $C$.  This gives that 
\[ 
b(C) = b(C_2)+1.
\]
Also the last terminal chord of $C_2$ is the last chord of $C_2$ which has index $|C_2|+1$ in $C$ and the next terminal chord in $C$ is the first terminal chord of $C_1$ which has index $b(C_1)+|C_2|$ in $C$.

So $\bar{\delta}(C)$ is the concatenation of $\bar{\delta}(C_2)$, $b(C_1)+|C_2|-(|C_2|+1)-1$, and $\bar{\delta}(C_1)$.  Simplifying
\[
\bar{\delta}(C) = (\bar{\delta}(C_2),b(C_1)-2,\bar{\delta}(C_1))
\]
The result follows.
\end{proof}

\begin{proof}[Proof of Proposition \ref{gamma k in g}]
  Take $k \geq 2$.

  Take two rooted connected chord diagrams $C_1$ and $C_2$, where $|C_2|=\ell$.  Take $1 \leq m \leq 2\ell-1$ and 
  let
  \[
  C=C_1 \stackrel{\curvearrowright}{(0,m)}C_2
  \]
By Lemma \ref{first rec claim}
\[
f_{C}f_{b(C)-k} = f_{C_1}f_{b(C_1)-1}f_{C_2}f_{b(C_2)-k+1}
\]
so the term contributed to the right hand side of \eqref{g rec spread out} by $C$ is the same as the term contributed to the left hand side of \eqref{g rec spread out} by $C_1$ and $C_2$.
Since the root share decomposition is unique and there are $2\ell-1$ choices for $m$, we thus obtain \eqref{g rec spread out}
which gives the proposition.
\end{proof}

Scaling to match what we will ultimately need, if we let
\[
  \gamma_i = \frac{(-1)^i}{i!}g_i
\]
then Proposition \ref{gamma k in g} gives
\begin{equation}\label{gamma k rec rephrased}
\gamma_k(x) = \frac{1}{k}\gamma_1(x)\left(-1+2x\frac{d}{dx}\right)\gamma_{k-1}(x
) \quad \text{for $k\geq 2$}.
\end{equation}


\subsection{A tree recurrence}

The goal of this subsection is the following somewhat technical recurrence.
\begin{prop}\label{second rec}
\begin{equation}\label{key lemma}
\sum_{\substack{C \in\mathcal{RCCD}\\|C|=i+1\\b(C) = j+1}}f_{C} = \sum_{k=1}^{i}\sum_{\ell = 1}^{j}\binom{j}{\ell}\left(\sum_{\substack{C\in\mathcal{RCCD}\\|C|=k\\b(C)\geq \ell}}f_{C}f_{b(C)-\ell}\right)\left(\sum_{\substack{C\in\mathcal{RCCD}\\|C|=i-k+1\\b(C)=j-\ell+1}}f_{C}\right)
\end{equation}
for $i \geq 1$ and $j \geq 1$.
\end{prop}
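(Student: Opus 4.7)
The plan is to prove the identity by an iterated root-share decomposition combined with a shuffle argument for the binomial factor.

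First, for a chord diagram $C$ with $|C|=i+1$ and $b(C)=j+1$, I would iteratively apply the root-share decomposition $\ell$ times, for each $\ell \in \{1, \ldots, j\}$, obtaining a sequence of peeled pieces $D_1, \ldots, D_\ell$ together with a residue chord diagram $E$ satisfying $b(E)=j+1-\ell$. This residue identity is obtained by $\ell$-fold application of the relation $b(A\curvearrowright B)=b(B)+1$ established in the proof of Lemma~\ref{first rec claim}.

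Second, I would regroup the pieces into a pair $(C_1,C_2)$ by setting $C_2 := E$ (so that $|C_2|=i-k+1$ and $b(C_2)=j-\ell+1$ with $k:=\sum_t |D_t|$) and forming $C_1 := D_1 \curvearrowright \cdots \curvearrowright D_\ell$ via iterated insertion (with $|C_1|=k$ and $b(C_1)=b(D_\ell)+\ell-1\geq \ell$). Iterating the formula $\bar\delta(A\curvearrowright B) = (\bar\delta(B), b(A)-2, \bar\delta(A))$ from the proof of Lemma~\ref{first rec claim} separately on $C$ and on $C_1$ yields explicit product expressions for $f_C$ and for $f_{C_1} f_{b(C_1)-\ell} f_{C_2}$, and comparing these gives the weight identity $f_C = f_{C_1}\, f_{b(C_1)-\ell}\, f_{C_2}$. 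The degenerate size-one case (where $\bar\delta(A\curvearrowright B)$ is better described as $(0,\bar\delta(B))$) must be handled with care at this stage.

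Third, the binomial coefficient $\binom{j}{\ell}$ is to be explained by a shuffle argument: once $(C_1,C_2)$ are fixed subject to the constraints on the right-hand side, there are exactly $\binom{j}{\ell}$ distinct chord diagrams $C$ obtainable from the assembly, indexed by the interleavings of the $\ell$ peelings from $C_1$ with the $j-\ell$ peelings from $C_2$ (each ordered internally) when inserted into the iterated decomposition of $C$. Via the tree map $\mathcal{T}$, this is the number of shuffles of the root-share steps of $\mathcal{T}(C_1)$ into those of $\mathcal{T}(C_2)$ to produce $\mathcal{T}(C)$; summing over all such shuffles and all valid $(C_1,C_2,\ell)$ realizes each chord diagram counted on the left-hand side exactly once.

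The main obstacle is the second step. Tracking the cross-terms $f_{b(D_t)-2}$ coming from the iterated $\bar\delta$-formula for $C$ against those appearing in $f_{C_1} f_{b(C_1)-\ell} f_{C_2}$ requires precise matching at the interface between the pieces assigned to $C_1$ and those assigned to $C_2$. In particular, the extra factor $f_{b(C_1)-\ell}$ on the right-hand side must coincide exactly with the residue cross-term at this interface, and verifying this—together with the degenerate situation when some $|D_t|=1$—is the technical heart of the argument.
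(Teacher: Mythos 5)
Your overall strategy (decompose $C$, match weights via Lemma \ref{first rec claim}, count reassemblies by a shuffle) has the right shape, but the specific decomposition you choose does not give a bijection, and this is a genuine gap rather than a technicality. The problem is that your splitting of $C$ into $(C_1,C_2)$ is carried out \emph{for every} $\ell\in\{1,\dots,j\}$: since each root-share peel lowers $b$ by exactly one, the residue after $\ell$ peels always satisfies $b(E)=j+1-\ell$, and your reassembled $C_1$ always satisfies $b(C_1)\ge\ell$, so every diagram $C$ on the left-hand side produces a valid triple $(\ell,C_1,C_2)$ for all $j$ values of $\ell$ and is therefore charged to the right-hand side $j$ times, not once. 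Concretely, for $i=j=2$ the three diagrams with $|C|=3$ and $b(C)=3$ each decompose under your rule both at $\ell=1$ and at $\ell=2$, whereas the right-hand side equals $2\,g_{1,1}F_{1,1}+g_{2,2}F_{0,0}=3f_0^2$ and so requires exactly two of them to be assigned to $\ell=1$ and one to $\ell=2$. A correct argument must determine $\ell$ canonically from $C$; the paper does this by splitting $\mathcal{T}(C)$ at its root into the left subtree $D_1$ and the right subtree $D_2$ and setting $\ell=b(C)-b(D_2)$, which is genuinely different from, and not recoverable from, ``the first $\ell$ root-share peels,'' because successive peels may be inserted into either subtree (this is exactly the three-way case analysis in Lemmas \ref{old claim 1} and \ref{old claim 2}).

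A second, related problem is your accounting for $\binom{j}{\ell}$. You describe it as the number of interleavings of ``the $\ell$ peelings from $C_1$'' with ``the $j-\ell$ peelings from $C_2$,'' but $C_1$ has $b(C_1)-1$ linear peelings, and the right-hand side only constrains $b(C_1)\ge\ell$, so the number of peelings of $C_1$ need not be $\ell$ and the interleaving count does not come out to $\binom{j}{\ell}$; moreover the reassembly $D_1\curvearrowright\cdots\curvearrowright D_\ell$ is not well defined without specifying insertion positions, which do not transfer from $C$ to the smaller partial assemblies. In the paper the binomial instead counts shuffles of \emph{leaf labels}: among the $j$ non-root labels of $\mathcal{T}(C)$ smaller than $b(C)$, one chooses which $\ell$ lie in the left subtree, and Lemma \ref{old claim 3} shows each such choice yields a unique admissible tree. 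Your step 2, by contrast, is essentially sound --- the weight identity $f_C=f_{C_1}f_{b(C_1)-\ell}f_{C_2}$ does follow from iterating Lemma \ref{first rec claim}, interface term included --- so the technical heart is not where you located it; it is the canonicity of the split and the label-shuffle count.
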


This recurrence comes from the decomposition of $\mathcal{T}(C)$ into the left and right subtrees of the root.  This is a very natural decomposition at the level of trees, but it is not at all apparent at the level of chord diagrams as can be seen on the following examples where the chords colored in red are in the right subtree, the chords in blue are in the left subtree and the subtree of the root-share have dotted edges :\\

\begin{figure}[H]

 \begin{center}
 \hspace{3cm}
\begin{pspicture}(0,0)(12.8,3.2)

\psrotate(-2,1.9){130}{
\pscircle(-2,1.900){1.25}
\psline[linecolor=red]{*-*}(-2,0.65)(-2,3.15)
\psarc[linecolor=red]{*-*}(-2,3.15){1}{204}{336}
\psrotate(-2,1.9){180}{\psarc[linecolor=blue]{*-*}(-2,3.15){1}{204}{336}}
}
\pscircle[linecolor=red](-2.175,0.65){0.12}
\rput(-2.175,0.4){\tiny$1$}
\rput(-0.725,1.75){\tiny$2$}
\rput(-1.05,2.85){\tiny$3$}

\rput(2.2,2){
\pstree[treesep=0.5cm,levelsep=0.5cm]{\Tc*{0pt}}{\pstree[treesep=1.2cm,levelsep=0.7cm]{\Tc*{1.2pt}}{\Tc*{1.2pt} \pstree[treesep=1.2cm,levelsep=0.7cm]{\Tc*{1.2pt}}{\psset{linestyle=dotted,dotsep=1pt} \Tc*{1.2pt} \psset{linestyle=solid} \Tc*{1.2pt}} }}
}
\pscircle[linecolor=black](1.885,2.45){0.12}
\rput(1.89,0.9){\tiny$1$}
\rput(3.165,0.9){\tiny$3$}
\rput(1.25,1.6){\tiny$2$}

\pscircle(7,1.900){1.25}
\psline[linecolor=blue]{*-*}(7,0.65)(7,3.15)
\psrotate(7,1.9){60}{\psline[linecolor=blue]{*-*}(7,0.65)(7,3.15)}
\psrotate(7,1.9){120}{\psline[linecolor=red]{*-*}(7,0.65)(7,3.15)}
\pscircle[linecolor=red](7,0.65){0.12}
\rput(7,0.4){\tiny$1$}
\rput(8.3,1.2){\tiny$2$}
\rput(8.25,2.65){\tiny$3$}

\rput(11.8,2){
\pstree[treesep=0.5cm,levelsep=0.5cm]{\Tc*{0pt}}{\pstree[treesep=1.2cm,levelsep=0.7cm]{\Tc*{1.2pt}}{\pstree[treesep=1.2cm,levelsep=0.7cm]{\Tc*{1.2pt}}{\psset{linestyle=dotted,dotsep=1pt} \Tc*{1.2pt} \psset{linestyle=solid} \Tc*{1.2pt}} \Tc*{1.2pt}}}
}
\pscircle(12.125,2.45){0.12}
\rput(10.85,0.9){\tiny$1$}
\rput(12.15,0.9){\tiny$2$}
\rput(12.825,1.6){\tiny$3$}

\rput(13.825,1.6){.}

\end{pspicture}
\end{center}

\end{figure} \hspace{0.5cm}

In order to prove Proposition \ref{second rec} we need to characterize $\mathcal{T}(\mathcal{RCCD})$ and then we need some technical lemmas. 

\begin{definition} The branch of a planar binary tree which follows the right child at each vertex will be called the \textbf{fully right branch}. The number of leaves of a tree $t$ will be denoted $\ell(t)$
\end{definition}

The elements of $\mathcal{T}(\mathcal{RCCD})$ are built inductively by insertions.  These insertions shift the labels of the leaves in the intersection order in a very simple manner. This is what the following property encodes.

\begin{lemma}
Trees in $\mathcal{T}(\mathcal{RCCD})$ have the following property:
\begin{itemize}
\item[P1:] At any vertex $w$, the smallest label in the left subtree of $w$ is smaller than the label at the end of the fully right branch from $w$.
\end{itemize}
\end{lemma}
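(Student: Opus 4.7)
The plan is induction on $n=|C|$, using the root-share decomposition $C=C_1\stackrel{\curvearrowright}{(0,m)}C_2$ together with the recursive identity $\mathcal{T}(C)=\mathcal{T}(C_1)\stackrel{\curvearrowright}{m}\mathcal{T}(C_2)$. The base case $n=1$ is vacuous, since $\mathcal{T}(C)$ is then a single labeled leaf with no internal vertices.

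For the inductive step, I would first record how the intersection-order labels on chords move from $C_1$ and $C_2$ into $C$. A small extension of Lemma~\ref{first rec claim} gives that the label map $C_1\to C$ fixes $1$ (the root chord) and sends $j\mapsto j+|C_2|$ for $j\geq 2$, while the label map $C_2\to C$ is simply $k\mapsto k+1$; both of these maps are strictly order-preserving on their respective domains. The critical consequence is that label $1$ is globally smallest in $C$, is carried by exactly one leaf, and by the definition of the tree insertion $\stackrel{\curvearrowright}{m}$ this leaf lies inside $\mathcal{T}(C_1)$, which sits as the entire left subtree of the new vertex $v_{\mathrm{new}}$ created at edge $m$ of $\mathcal{T}(C_2)$.

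Taking P1 for $\mathcal{T}(C_1)$ and $\mathcal{T}(C_2)$ from the inductive hypothesis, I would verify P1 at each internal vertex $w$ of $\mathcal{T}(C)$ by cases. (i) If $w=v_{\mathrm{new}}$, its left subtree is all of $\mathcal{T}(C_1)$ and contains the leaf labeled $1$, while its fully right branch terminates in $\mathcal{T}(C_2)$ at a leaf with $C$-label $\geq 2$, so P1 is immediate. (ii) If $w$ lies in $\mathcal{T}(C_2)$ and the $m$-th edge lies in the left subtree of $w$, then that left subtree now contains a copy of $\mathcal{T}(C_1)$, hence the leaf labeled $1$, while the fully right branch from $w$ still ends in $\mathcal{T}(C_2)$ at a leaf of $C$-label $\geq 2$. (iii) In every other case (either $w\in\mathcal{T}(C_1)$, or $w\in\mathcal{T}(C_2)$ with the $m$-th edge not in its left subtree), the subtree rooted at $w$ and the leaf at the end of its fully right branch are unchanged by the insertion (possibly the right branch now passes through $v_{\mathrm{new}}$, but since $v_{\mathrm{new}}$'s right child is the original child the endpoint leaf does not move), and both the smallest left-subtree label and the endpoint label at $w$ are images under the order-preserving label maps above of the corresponding quantities in $\mathcal{T}(C_1)$ or $\mathcal{T}(C_2)$, so IH transfers directly.

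The main subtlety lies in case (iii): one must check that when the insertion lands on an edge of the fully right branch from $w$ the endpoint leaf is unchanged (direct from the definition of $\stackrel{\curvearrowright}{m}$), and that the non-uniform shift $1\mapsto 1$, $j\mapsto j+|C_2|$ on $C_1$-labels really is order-preserving on $\{1,\ldots,|C_1|\}$ (which holds because $1<2+|C_2|$, and in particular an IH inequality of the form $a<b$ never forces $b=1$). Beyond these points everything is routine bookkeeping of where $v_{\mathrm{new}}$ sits relative to $w$.
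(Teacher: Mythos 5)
Your proposal is correct and follows essentially the same route as the paper: induction on the size of the diagram via the root-share decomposition $C=C_1\stackrel{\curvearrowright}{(0,m)}C_2$, the key observation about how the intersection-order labels of $C_1$ and $C_2$ shift inside $C$, and a case analysis on the position of $w$ relative to the insertion vertex. Your case (ii) (where $w$ lies in $\mathcal{T}(C_2)$ but the insertion lands in its left subtree) is a sub-case the paper's three-way split passes over somewhat tersely, so your treatment is, if anything, slightly more careful than the original.
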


\begin{proof}
After we have made a critical observation concerning the properties of the intersection order, the proof follows from an elementary induction.\\

Let $X$ and $Y$ be elements of $\mathcal{RCCD}$ whose sequence of chords labelled in the intersection order are respectively $(\sigma_{i_{1}},\cdots,\sigma_{i_{n}})$ and $(\pi_{i_{1}},\cdots,\pi_{i_{m}})$. Then for any $0 < k \leq 2m-1$, the chords of $X \stackrel{\curvearrowright}{(0,k)} Y $ in the intersection order are labelled:
\begin{equation*}
\sigma_{i_{1}} < \pi_{i_{1}} + 1 < \cdots < \pi_{i_{m}}+1 < \sigma_{i_{2}}+m < \cdots < \sigma_{i_{n}} + m 
\end{equation*}
since the connected component attached to the root coming from $X$ comes last in the definition of the intersection order.\\

The rest follows immediately by induction on the number of leaves of our trees. The base case is obvious:

\begin{figure}[H]

 \begin{center}
 \vspace{1cm}
\begin{pspicture}(0,0)(12.8,1)

\rput(7,2){
\pstree[treesep=0.5cm,levelsep=0.5cm]{\Tc*{0pt}}{\pstree[treesep=1.2cm,levelsep=0.7cm]{\Tc*{1.2pt}}{\Tc*{1.2pt} \Tc*{1.2pt}}}
}
\pscircle(7,2.1){0.12}
\rput(6.35,1.2){\tiny$1$}
\rput(7.65,1.2){\tiny$2$}

\end{pspicture}
\end{center}

\end{figure} \vspace{-1.2cm}

So assume that any element of $\mathcal{T}(\mathcal{RCCD})$ with at most $n-1$ leaves has the desired property and now consider a tree $T$ with $n$ leaves.

Because of the root-share decomposition there is some integer $k$ so that we can write $T=T_{0}\stackrel{\curvearrowright}{k} T_{1} $ with $T_{0},T_{1} \in \mathcal{T}(\mathcal{RCCD})$ having less than $n$ leaves. There are three possibilities:
\begin{enumerate}
\item if $w$ is the insertion vertex in $T$ then the left subtree must contain the label $1$ and the property is true;
\item if $w$ is a vertex of $T$ that was a vertex of $T_{0}$ then by the induction hypothesis the property must hold since by the preliminary remark the labels in the subtree at $w$ have all been shifted by $\ell(T_1)$;
\item if $w$ is a vertex of $T$ that was a vertex of $T_{1}$ then by the induction hypothesis the property must hold since by the preliminary remark the labels in the subtree at $w$ have all been shifted by $1$;
\end{enumerate}

So the property holds at any vertex of $T$ which concludes the induction.

\end{proof}

\begin{definition}
Given $T\in\mathcal{PBT}_{c}$, by \textbf{removing} the subtree rooted at a vertex $w$ we mean deleting this subtree and replacing the parent of $w$ with its other child.

Given $T\in\mathcal{PBT}_{c}$  which satisfies P1, define the \textbf{smallest removable subtree containing 1} to be the minimal subtree rooted at some $w$ of $T$ which has $1$ as a leaf and which can be removed while maintaining P1. 
\end{definition}

\noindent Here are some trees where the smallest removable subtrees have been marked in red:

\begin{figure}[H]

 \begin{center}
 \vspace{0.5cm}
 \hspace{1cm}
\begin{pspicture}(0,0)(12.8,3.2)

\psscalebox{1.4}{
\rput(-0.5,1.8){
\pstree[treesep=0.5cm,levelsep=0.5cm]{\Tc*{0pt}}
{\pstree[treesep=1.2cm,levelsep=0.5cm]{\Tc*{1.2pt}}{ \pstree[treesep=1.2cm,levelsep=0.5cm,linecolor=red]{\Tc*{0pt}}{\Tc*{1.2pt}  \Tc*{1.2pt} } \Tc*{1.2pt} } }
}
\pscircle[linecolor=black](-0.18,2.07){0.1}
\rput(-1.45,0.9){\tiny$1$}
\rput(-0.15,0.9){\tiny$3$}
\rput(0.45,1.4){\tiny$2$}
}

\psscalebox{1.4}{
\rput(3.5,1.8){
\pstree[treesep=0.5cm,levelsep=0.5cm]{\Tc*{0pt}}
{\pstree[treesep=1.2cm,levelsep=0.5cm]{\Tc*{1.2pt}}{\Tc*{1.2pt} \pstree[treesep=1.2cm,levelsep=0.5cm]{\Tc*{0pt}}{\pstree[treesep=0.8cm,levelsep=0.5cm]{\Tc*{0pt}}{\psset{linecolor=red} \Tc*{1.2pt} \psset{linecolor=black} \Tc*{1.2pt} }  \Tc*{1.2pt} } } } 
}
\pscircle[linecolor=black](3.19,2.32){0.1}
\rput(2.58,1.65){\tiny$3$}
\rput(2.75,0.65){\tiny$1$}
\rput(3.65,0.65){\tiny$2$}
\rput(4.45,1.15){\tiny$4$}
}

\psscalebox{1.4}{
\rput(7.7,1.8){
\pstree[treesep=0.5cm,levelsep=0.5cm]{\Tc*{0pt}}
{\pstree[treesep=1cm,levelsep=0.7cm]{\Tc*{1.2pt}}{ \pstree[treesep=1cm,levelsep=0.5cm,linecolor=red]{\Tc*{0pt}}{\Tc*{1.2pt}  \Tc*{1.2pt} } \pstree[treesep=1cm,levelsep=0.5cm]{\Tc*{0pt}}{\Tc*{1.2pt}  \Tc*{1.2pt} } } }
}
\pscircle[linecolor=black](7.7,2.17){0.1}
\rput(6.07,0.8){\tiny$1$}
\rput(7.15,0.8){\tiny$4$}
\rput(8.25,0.8){\tiny$2$}
\rput(9.33,0.8){\tiny$3$}
}

\rput(14,1.95){.}

\end{pspicture}
\end{center}
\end{figure} \vspace{-1.2cm}\hspace{-0.55cm}

\begin{lemma}
A tree $T$ in $\mathcal{T}(\mathcal{RCCD})$ has the following property:
\begin{itemize}
\item[P2:] Let $H$ be the smallest removable subtree of $T$ containing $1$.  $H$ has leaf labels $1$, $\ell(T)-\ell(H)+2, \ell(T)-\ell(H)+3, \ldots, \ell(T)$. 
\end{itemize}
\end{lemma}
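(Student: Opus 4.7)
The plan is to induct on $|C|$, where $T = \mathcal{T}(C)$; the base case $|C|=1$ is vacuous since the tree has only leaf $1$. For the inductive step with $|C|=n\geq 2$, I would apply the root-share decomposition $C = \dot{C} \stackrel{\curvearrowright}{(0,k)} (C\setminus\dot{C})$ to get $T = T_1 \stackrel{\curvearrowright}{k} T_2$ with $T_1 = \mathcal{T}(\dot{C})$ and $T_2 = \mathcal{T}(C\setminus\dot{C})$. The same intersection-order shift observation that drove the proof of P1 shows that, when $T_1$ is embedded into $T$, its leaves carry precisely the labels $\{1,\, n-\ell(T_1)+2,\, n-\ell(T_1)+3, \ldots,\, n\}$---exactly the leaf set that P2 predicts for $H = T_1$. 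So the goal reduces to showing that this embedded copy of $T_1$ is itself the smallest removable subtree of $T$ containing leaf~$1$. Since any subtree of $T$ containing leaf~$1$ is rooted at an ancestor of that leaf, such subtrees are totally ordered by inclusion, and only two things need to be checked.

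First, I would verify that $T_1$ is removable: removal collapses the insertion vertex $w_0$ (whose left subtree is $T_1$) back to its right child, restoring $T_2$ with every label shifted by $+1$. The inductive hypothesis gives P1 for $T_2$, and P1 is manifestly invariant under a uniform shift of labels, so this step is essentially free. Second, no strict subtree $S \subsetneq T_1$ containing leaf~$1$ is removable: at the insertion vertex $w_0$ the fully right branch descends into $T_2$, and hence has endpoint label at most $n-\ell(T_1)+1$, whereas removing $S$ strips leaf~$1$ out of the left subtree of $w_0$, so the remaining labels there (coming from $T_1\setminus S$) are all at least $n-\ell(T_1)+2$. These two bounds contradict the P1 inequality at $w_0$, so $S$ cannot be removable. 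Pinning $H = T_1$ then yields P2.

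The main obstacle I anticipate is the boundary case $k=1$, where the insertion acts on the root edge and, by the definition of $\stackrel{\curvearrowright}{1}$, the newly created vertex is declared to be the root of the combined tree. One must verify that in this case $w_0$ is still a genuine binary vertex at which P1 is required to hold, so that the final inequality really does obstruct removability; once this bit of bookkeeping is settled, the rest of the argument proceeds exactly as sketched.
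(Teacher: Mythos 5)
Your proof is correct and follows essentially the same route as the paper: identify the root-share insertion subtree $T_1$ (which carries leaf $1$ together with the top $\ell(T_1)-1$ labels by the intersection-order shift), observe it is removable, and rule out any strictly smaller removable subtree containing $1$ by exhibiting a violation of P1 at the insertion vertex. You are in fact slightly more explicit than the paper on two points it leaves implicit — that removing $T_1$ preserves P1 because P1 is shift-invariant, and the $k=1$ boundary case — so no changes are needed.
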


\begin{proof}
  Let $T$, $T_1$, $T_2$ as in the previous proof, $v$ be a vertex of $T$ and $C$, $C_1$, $C_2$ the chord diagrams corresponding to $T$, $T_1$, $T_2$ respectively.

  $H$ can not be strictly contained in $T_2$ as if it were then when $H$ is removed the leaves of the left child of $v$ are labelled with some labels from $|C_2|+1$ to $|C|$ while all other labels in the tree are at most $|C_2|$.  Thus P1 is not satisfied.

  $T_2$ can not be strictly contained in $H$ as $T_2$ is a removable subtree of $T$ by construction.

  Thus $T_2=H$ and as a consequence $H$ has P2.
\end{proof}

Say a tree satisfies $P2$ recursively if it satisfies $P2$ and what remains after removing the largest removable subtree containing $1$ satisfies $P2$ and so on.

\begin{thm}
  $\mathcal{T}(\mathcal{RCCD})$ is the set of $T\in \mathcal{PBT}_{c}$
which satisfy $P1$ and satisfy $P2$ recursively.
\end{thm}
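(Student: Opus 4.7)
The theorem is an ``if and only if'' characterization of the image of $\mathcal{T}$. The ``only if'' direction follows readily from the root-share decomposition and the two preceding lemmas by induction on $|C|$; the substantive content is the ``if'' direction, which reconstructs a chord diagram from a tree satisfying the two properties.

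For the forward direction, I would induct on $|C|$. The base case $|C|=1$ is immediate. In the inductive step, apply the root-share decomposition to write $C = \dot{C} \stackrel{\curvearrowright}{(0,i)} (C \setminus \dot{C})$, giving at the tree level $T = T_2 \stackrel{\curvearrowright}{i} T_1$ with $T_2 = \mathcal{T}(\dot{C})$ and $T_1 = \mathcal{T}(C\setminus\dot{C})$. The first lemma above gives P1 for $T$, and the second lemma identifies $H=T_2$ as the smallest removable subtree of $T$ containing the leaf labeled $1$, giving P2. To propagate P2 recursively, observe that removing $H$ from $T$ yields a tree isomorphic to $T_1$ after canonically relabeling its leaves $1,\ldots,\ell(T_1)$; by the inductive hypothesis $T_1$ already satisfies P1 and P2 recursively, so $T$ satisfies P2 recursively too.

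For the reverse direction, I would induct on $\ell(T)$, with the single-leaf base case being trivial. Given $T$ satisfying P1 and P2 recursively, let $H$ be the smallest removable subtree of $T$ containing the leaf $1$ and write $T = H \stackrel{\curvearrowright}{i} T'$ for the appropriate insertion edge $i$ determined by the position of $H$'s root in $T$. By P2 the leaf labels of $H$ are $1,\ell(T)-\ell(H)+2,\ldots,\ell(T)$, and the leaf labels of $T'$ are $2,3,\ldots,\ell(T)-\ell(H)+1$. Relabel canonically to obtain $\widetilde{H}$ on $\{1,\ldots,\ell(H)\}$ and $\widetilde{T'}$ on $\{1,\ldots,\ell(T')\}$. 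If both $\widetilde{H}$ and $\widetilde{T'}$ satisfy P1 and P2 recursively, then the inductive hypothesis produces $\dot{C}, C_1 \in \mathcal{RCCD}$ with $\mathcal{T}(\dot{C})=\widetilde{H}$ and $\mathcal{T}(C_1)=\widetilde{T'}$, and the recursive definition of $\mathcal{T}$ yields $\mathcal{T}(\dot{C} \stackrel{\curvearrowright}{(0,i)} C_1) = T$, completing the induction.

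The main obstacle is verifying this hereditary claim: that removing $H$ and relabeling preserves both properties on each piece. For $\widetilde{T'}$ the recursive condition is essentially built into the hypothesis, but P1 must be checked afresh, since the fully right branch through a vertex on the root-to-$H$ path in $T$ is altered by the removal. Here the key point is that the labels removed are extremal — namely $1$ together with the $\ell(H)-1$ largest — so at any surviving vertex $w$ the smallest label in its left subtree either persists or shifts to the second-smallest, while the end of the fully right branch from $w$ either is unaffected (if the branch avoids $H$) or is replaced by the sibling of $H$'s parent; in either case the inequality of P1 in $T$ transfers to $\widetilde{T'}$. For $\widetilde{H}$, P1 is inherited directly since relabeling preserves relative order. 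The subtler ingredient is P2 recursively for $\widetilde{H}$, which requires arguing that the smallest removable subtree of $\widetilde{H}$ containing $1$ itself has the extremal-label structure of P2 and that this nesting iterates correctly; the cleanest way is to observe that the minimality of $H$ inside $T$ forces analogous minimality at each step of the recursion inside $\widetilde{H}$, mirroring the iterated root-share decomposition of any chord diagram that would map to $\widetilde{H}$.
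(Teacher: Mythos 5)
Your proposal follows essentially the same route as the paper: the forward direction is the same appeal to the two preceding lemmas via the root-share decomposition, and the reverse direction is the same inductive construction of the inverse of $\mathcal{T}$ by stripping off the removable subtree containing the leaf labelled $1$ and reassembling with the insertion operations. If anything you are more careful than the paper, which simply asserts the construction ``is clearly inverse to $\mathcal{T}$''; the hereditary verification you flag --- that after removal and relabelling both pieces again satisfy P1 and P2 recursively, so the induction hypothesis applies --- is exactly the step the paper leaves implicit (and note that P1 for the remainder is immediate from the very definition of a removable subtree, which shortens your argument there).
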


\begin{proof}
By the preceding lemmas every $T \in \mathcal{T}(\mathcal{RCCD})$ satisfies $P1$ and satisfies $P2$ recursively.

To show that these properties characterize the image, we will inductively define the inverse of $\mathcal{T}$.  Map the tree with only one vertex to the rooted chord diagram with only one chord.
Consider $T\in \mathcal{PBT}_{c}$ with at least 2 vertices and
which satisfies $P1$ and satisfies $P2$ recursively.
Let $T_2$ be the largest removable subtree of $T$ containing $1$.  Let $T_1$ be $T$ with $T_2$ removed.  Let $k$ be the index of the vertex where $T_2$ was inserted.
Shift the vertex labels of $T_1$ and $T_2$ preserving their orders.  Inductively associate $T_1$ and $T_2$ to chord diagrams $C_1$ and $C_2$.  Associate $T$ to $C_1  \stackrel{\curvearrowright}{(0,k)} C_2$. 

This construction is clearly inverse to $\mathcal{T}$.
\end{proof}

Next we need one further nice observation and three technical lemmas.

\begin{prop}
  Let $C$ be a connected rooted chord diagram.  The $b(C)$ is the label of the leaf of the fully right branch of $\mathcal{T}(C)$.
\end{prop}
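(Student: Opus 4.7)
My plan is to induct on $|C|$ using the root-share decomposition $C = C_1 \stackrel{\curvearrowright}{(0,m)} C_2$, which corresponds under $\mathcal{T}$ to the tree insertion $\mathcal{T}(C) = \mathcal{T}(C_1) \stackrel{\curvearrowright}{m} \mathcal{T}(C_2)$. The base case $|C|=1$ is immediate: $b(C)=1$, the tree $\mathcal{T}(C)$ has its unique labelled leaf carrying label $1$, and the fully right branch trivially terminates there.

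For the inductive step I would split the argument into two pieces. The first is purely geometric: the insertion $\stackrel{\curvearrowright}{m}$ grafts $\mathcal{T}(C_1)$ on the \emph{left} of edge $m$ of $\mathcal{T}(C_2)$, and I claim this never disturbs the endpoint of the fully right branch of $\mathcal{T}(C_2)$. When $m>1$ the root of the resulting tree is still the root of $\mathcal{T}(C_2)$, and a left-graft cannot affect any right children encountered along the right-most path; when $m=1$ the fresh root has $\mathcal{T}(C_1)$ as its left child and the old root of $\mathcal{T}(C_2)$ as its right child, so taking right children immediately re-enters $\mathcal{T}(C_2)$ and continues along its existing rightmost path. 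In both cases the fully right branch of $\mathcal{T}(C)$ terminates at the same physical leaf as that of $\mathcal{T}(C_2)$.

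The second piece handles the labels. From the bookkeeping done at the start of the proof of Lemma~\ref{first rec claim}, the chord of $C_2$ whose index in the intersection order of $C_2$ is $i$ receives index $i+1$ in the intersection order of $C$, and $b(C)=b(C_2)+1$. Combining this with the inductive hypothesis, the fully right branch endpoint of $\mathcal{T}(C_2)$ carries label $b(C_2)$, so after the uniform $+1$ shift it carries label $b(C_2)+1=b(C)$ when viewed as a leaf of $\mathcal{T}(C)$, as required.

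The only genuinely delicate step is the geometric claim, and specifically its $m=1$ case where the overall root of the tree is replaced by a freshly created vertex; once one checks that the rightmost path still immediately plunges into $\mathcal{T}(C_2)$, everything else reduces to Lemma~\ref{first rec claim} and to unwinding the recursive definition of $\mathcal{T}$.
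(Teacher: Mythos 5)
Your proof is correct and follows essentially the same route as the paper's: induction via the root-share decomposition, observing that the left-graft $\mathcal{T}(C_1) \stackrel{\curvearrowright}{m} \mathcal{T}(C_2)$ leaves the endpoint of the fully right branch untouched while the intersection-order bookkeeping of Lemma~\ref{first rec claim} gives $b(C)=b(C_2)+1$ and shifts the relevant leaf label by one. Your explicit case split on $m=1$ versus $m>1$ is a welcome elaboration of a point the paper states in one sentence.
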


\begin{proof}
  The result is clear if $|C|=1$.

  Suppose $|C|>1$.  Let $C_1$, $C_2$, $T$, $T_1$, and $T_2$ be as in the above proofs.  By construction $b(C) = b(C_2)+1$.  The insertion of $T_2$ into $T_1$ does not affect the fully right branch except to shift the labels of $T_2$ up by 1.   By induction the label of the leaf of the fully right branch of $T_2$ is $b(C_2)$ and so the label of the leaf of the fully right branch of $T$ is $b(C_2)+1=b(C)$.
\end{proof}

\begin{lemma}\label{old claim 1}
  Let $C\in\mathcal{RCCD}$ with $|C|\geq 2$.  Let $T=\mathcal{T}(C)$, and let $H_1$ and $H_2$ be the left and right subtrees, respectively, of $T$.  Suppose $D_1$ and $D_2$ are chord diagrams with $\mathcal{T}(D_i) = H_i$.  Then
 \[b(D_1) \geq b(C)-b(D_2).\]
\end{lemma}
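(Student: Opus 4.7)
The plan is to proceed by induction on $|C|$, exploiting the root-share decomposition together with Lemma \ref{first rec claim} to track how $b$ behaves under insertion. For the base case $|C|=2$, the tree $\mathcal{T}(C)$ is a single root with two leaves, so $H_1$ and $H_2$ are each a single leaf, $D_1 = D_2$ is the one-chord diagram, and $b(D_1)=1 \geq 2-1 = b(C)-b(D_2)$.

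For the inductive step, write the root-share decomposition $C = \dot{C} \stackrel{\curvearrowright}{(0,k)} C'$ and correspondingly $T = T_2 \stackrel{\curvearrowright}{k} T_1$, where $T_2 = \mathcal{T}(\dot{C})$ and $T_1 = \mathcal{T}(C')$. If $k=1$, a fresh vertex is created at the root of $T$ with $H_1 = T_2$ and $H_2 = T_1$, so $D_1 = \dot{C}$ and $D_2 = C'$. Lemma \ref{first rec claim} gives $b(C)=b(C')+1=b(D_2)+1$, and since $b(D_1)\geq 1$ the inequality follows at once.

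If $k \geq 2$, the root of $T$ coincides with the root of $T_1$, and $T_2$ is inserted into either the left subtree $H_1^{(1)}$ or the right subtree $H_2^{(1)}$ of that root; set $D_i^{(1)} = \mathcal{T}^{-1}(H_i^{(1)})$. The hypothesis $k \geq 2$ forces $|C'| \geq 2$, so the induction hypothesis on $C'$ gives $b(D_1^{(1)}) \geq b(C')-b(D_2^{(1)})$. If $T_2$ lands inside $H_1^{(1)}$, then $H_2 = H_2^{(1)}$ and $H_1 = T_2 \stackrel{\curvearrowright}{k'} H_1^{(1)}$ for an appropriate $k'$; by the uniqueness of the largest removable subtree containing leaf $1$ (from the characterization theorem), this is the root-share decomposition of $D_1$, so Lemma \ref{first rec claim} yields $b(D_1) = b(D_1^{(1)})+1$ while $b(D_2) = b(D_2^{(1)})$. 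Combining with $b(C) = b(C')+1$ and the induction hypothesis gives the desired inequality. If instead $T_2$ lands inside $H_2^{(1)}$, the symmetric analysis yields $b(D_1)=b(D_1^{(1)})$ and $b(D_2)=b(D_2^{(1)})+1$; the $+1$ shifts on $b(C)$ and $b(D_2)$ cancel, and the induction hypothesis gives the conclusion directly.

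The main subtlety is to recognize, in the $k \geq 2$ case, that the copy of $T_2$ sitting inside $H_i$ is precisely the root-share subtree of $D_i = \mathcal{T}^{-1}(H_i)$. This is where the characterization of $\mathcal{T}(\mathcal{RCCD})$ via P1 and recursive P2 is needed: it guarantees that the removable subtree containing leaf $1$ is unique, which validates the clean application of Lemma \ref{first rec claim} and makes the $+1$ bookkeeping work out.
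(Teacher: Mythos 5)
Your proof is correct and follows essentially the same route as the paper's: both use the root-share decomposition $C=\dot C \stackrel{\curvearrowright}{(0,k)} C'$, split into the three cases of the insertion vertex being the root, in the left subtree, or in the right subtree, and track $b$ via the shift $b(X\stackrel{\curvearrowright}{(0,k)}Y)=b(Y)+1$. The only cosmetic difference is that you induct on $|C|$ while the paper inducts on $|C'|$ (its $C_2$), and you make explicit the point, left implicit in the paper, that the copy of $T_2$ inside $H_i$ realizes the root-share decomposition of $D_i$.
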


Another way to say this is that the leaf label of the fully right branch of $H_1$, with labels shifted to be consecutive, is $\geq b(C)-b(D_2)$.  That is, this lemma tells us that  $D_1$ and $D_2$ decompose $C$ into pieces which satisfy the correct conditions to be from a term on the right hand side of \eqref{key lemma}.

\begin{proof}
  Let $b(C) = j+1$ and let $b(D_2)= j-\ell+1$, so $\ell=b(C)-b(D_2)$.  $b(C)$ is the label of the leaf of the fully right branch of $T$ and $b(D_2)$ is the label of the leaf of the fully right branch of $H_2$, but these are the same leaf, the only difference is the label shifting.   Thus there must be $\ell$ labels on the $H_1$ side of $t$ which are less than $b(C)$. 
  
  Let $C=C_1 \stackrel{\curvearrowright}{(0,k)} C_2$ be the root-share decomposition of $C$.  Let $T_i = \mathcal{T}(C_i)$, and let $v$ be the $k$th vertex of $T$, that is the insertion vertex.
The proof of the lemma 
is an induction on the size of $C_2$.

If $|C_2|=1$ then $C_2=D_2$ and $b(C)=2$.  Also $b(D_2)=1$, so $\ell=1$, and since $1$ is the smallest label $b(D_1) \geq 1 = \ell$.

Now suppose $|C_2| > 1$.
By induction the result holds for $C_2$.  Let $D_1'$ and $D_2'$ be the chord diagrams corresponding to the left and right subtrees of $T_2$.  Now consider how $C_1$ is inserted into $C_2$.  There are three cases
\begin{enumerate}
  \item If $v$ is the root of $T$, then $C_1=D_1$ and $|D_1|=1$, so the lemma is true.
  \item If $v$ is in $D_1$ then $b(D_1) = b(D_1')+1$ and $b(C)= b(C_2)+1$ as label $1$ causes additional shifting while $b(D_2) = b(D_2')$ since the right subtree is not affected.  The induction hypothesis says $b(D_1') \geq b(C_2)-b(D_2')$.  Thus
\[
  b(D_1) \geq b(C_2) - b(D_2')+1 = b(C)-1 - b(D_1)+1 = b(C) - b(D_1)
\]
\item If $v$ is in $D_2$ then $b(D_1)= b(D_1')$ since the right subtree is not affected, while $b(D_2)= b(D_2')+1$ and $b(C)= b(C_2)+1$.  Thus
\[
  b(D_1) \geq b(C_2)-b(D_2') = b(C)-b(D_2)
\]  
\end{enumerate}
This completes the proof.
\end{proof}

\begin{lemma}\label{old claim 2}
  Let $C\in\mathcal{RCCD}$ with $|C|\geq 2$.  Let $T=\mathcal{T}(C)$, and let $H_1$ and $H_2$ be the left and right subtrees, respectively, of $T$. Suppose $D_1$ and $D_2$ are chord diagrams with $\mathcal{T}(D_i) = h_i$.  Then
\[
  f_{C} = f_{D_1}f_{b(D_1)+b(D_2)-b(C)}f_{D_2}
\]
\end{lemma}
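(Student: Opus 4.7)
The plan is to induct on $|C|$ using the root-share decomposition and Lemma \ref{first rec claim}. For the base case $|C|=2$, both $D_1$ and $D_2$ are single-chord diagrams, so $b(D_1)=b(D_2)=1$, $b(C)=2$, and $\bar{\delta}(C)=(0)$, reducing the identity to $f_C = f_0$, which holds.

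For the inductive step, write $C = C_1 \stackrel{\curvearrowright}{(0,m)} C_2$ and set $T_i = \mathcal{T}(C_i)$, so that $T = T_1 \stackrel{\curvearrowright}{m} T_2$. If $m=1$, the root of $T$ is the newly created vertex with $T_1$ and $T_2$ as left and right subtrees, forcing $H_1 = T_1$, $H_2 = T_2$ and hence $D_1 = C_1$, $D_2 = C_2$. By Lemma \ref{first rec claim} (after cancelling the common $f_{b(C)-k}=f_{b(C_2)-k+1}$ factor) one gets $f_C = f_{C_1}f_{b(C_1)-1}f_{C_2}$. Since $b(C)=b(C_2)+1$, one reads off $b(D_1)+b(D_2)-b(C) = b(C_1)-1$, so the identity holds.

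If $m\geq 2$, the root of $T$ coincides with the root of $T_2$, and $H_1,H_2$ arise from the left and right subtrees $L_2,R_2$ of that root. Let $D_2^L,D_2^R$ be the chord diagrams with $\mathcal{T}(D_2^L)=L_2$ and $\mathcal{T}(D_2^R)=R_2$, and apply the inductive hypothesis to $C_2$ to obtain
\[
f_{C_2} = f_{D_2^L}\, f_{b(D_2^L)+b(D_2^R)-b(C_2)}\, f_{D_2^R}.
\]
The insertion of $T_1$ at edge $m$ of $T_2$ lies either inside $L_2$ (when $2\leq m \leq 2\ell(L_2)$) or inside $R_2$; accordingly, either $D_2 = D_2^R$ and $D_1 = C_1\stackrel{\curvearrowright}{(0,m-1)}D_2^L$, or $D_1 = D_2^L$ and $D_2 = C_1\stackrel{\curvearrowright}{(0,m-2\ell(L_2))}D_2^R$. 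A second use of Lemma \ref{first rec claim} folds the three factors $f_{C_1}f_{b(C_1)-1}f_{D_2^{L/R}}$ on the modified side into $f_{D_1}$ or $f_{D_2}$, leaving
\[
f_C = f_{D_1}\, f_{b(D_2^L)+b(D_2^R)-b(C_2)}\, f_{D_2}.
\]

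What remains is a $b$-bookkeeping check: Lemma \ref{first rec claim} produces $b(D_1) = b(D_2^L)+1$ in the first sub-case and $b(D_2) = b(D_2^R)+1$ in the second, while $b(C) = b(C_2)+1$ in both; the two shifts cancel and give $b(D_1)+b(D_2)-b(C) = b(D_2^L)+b(D_2^R)-b(C_2)$, so the middle factor above is exactly $f_{b(D_1)+b(D_2)-b(C)}$. The main obstacle is being careful in the case analysis to correctly identify which of $H_1,H_2$ is unchanged by the root-share insertion and which is modified; once that is set up, the rest is forced by two applications of Lemma \ref{first rec claim}.
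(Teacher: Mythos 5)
Your proposal is correct and follows essentially the same route as the paper: induction via the root-share decomposition, an application of Lemma \ref{first rec claim} to $C$, the inductive hypothesis applied to $C_2$, a case split according to whether the insertion vertex is the root of $T$ or lands in the left or right subtree, a second application of Lemma \ref{first rec claim} to fold $f_{C_1}f_{b(C_1)-1}$ into the modified piece, and the cancellation of the $+1$ shifts in $b$. The paper organizes the three cases by the location of the insertion vertex rather than by the value of $m$, but the content is identical.
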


\begin{proof}
 Let $C=C_1 \stackrel{\curvearrowright}{(0,k)} C_2$ be the root-share decomposition of $C$.  Let $T_i = \mathcal{T}(C_i)$, and let $v$ be the $k$th vertex of $T$.  
Lemma \ref{first rec claim} gives
\begin{equation}\label{easy rec}
f_{C}f_{b(C)-k} = f_{C_1}f_{b(C_1)-1}f_{C_2}f_{b(C_2)-k+1}
\end{equation}

If $|C_2|=1$ then $C_1=D_1$, $C_2=D_2$, $k=1$, $b(C)=2$, and $b(D_2)=1$, so \eqref{easy rec} gives the result.

Now assume $|C_2|>1$.
Let $D_1'$ and $D_2'$ be the chord diagrams corresponding to the left and right subtrees of $T_2$.  By induction the result holds for $C_2$, that is
\begin{equation}\label{claim in ind}
  f_{C_2} = f_{D_1'}f_{b(D_1')+b(D_2')-b(C_2)}f_{D_2'}
\end{equation}
There are again three cases
  \begin{enumerate}
    \item If $v$ is the root of $T$, then $k=1$ so $C_1=D_1$ and $C_2=D_2$.  Also $b(C_2)=b(C)-1$, and so \eqref{easy rec} becomes the statement of the lemma.
    \item If $v$ is in $D_1$ then $D_1$ is $C_1$ inserted into the $(k-1)^{st}$ slot of $D_1'$, and so 
\begin{equation}\label{easy rec in ind}
  f_{D_1}f_{b(D_1)-k+1} = f_{C_1}f_{b(C_1)-1}f_{D_1'}f_{b(D_1')-k+2}
\end{equation}
Next, note that
  $b(C)  = b(C_2)+1$ and $b(D_1)  = b(D_1')+1$
because inserting shifts the labels up by 1 in the tree being inserted into.
Then substituting \eqref{claim in ind} and \eqref{easy rec in ind} into \eqref{easy rec} we get
\begin{align*}
f_{C}f_{b(C)-k} & = \frac{f_{D_1}f_{b(D_1)-k+1}}{f_{D_1'}f_{b(D_1')-k+2}}f_{D_1'}f_{b(D_1')+b(D_2')-b(C_2)}f_{D_2'}f_{b(C_2)-k+1} \\
& = f_{D_1}f_{b(D_1)+b(D_2)-b(C)}f_{D_2}f_{b(C)-k}
\end{align*}
which implies the statement of the lemma.
\item If $v$ is in $D_2$ then $D_2$ is $C_1$ inserted into the $(k-1-|D_1|)^{st}$ slot of $D_2'$, and so 
\[
  f_{D_2}f_{b(D_2)-k+1+|D_1|} = f_{C_1}f_{b(C_1)-1}f_{D_2'}f_{b(D_2')-k+|D_1|+2}
\]
Similarly to the previous case
  $b(C)  = b(C_2)+1$ and $b(D_2)  = b(D_2')+1$.  So substituting
\begin{align*}
f_{C}f_{b(C)-k} & = \frac{f_{D_2}f_{b(D_2)-k+1+|D_1|}}{f_{D_2'}f_{b(D_2')-k+|D_1|+2}}f_{D_1}f_{b(D_1)+b(D_2')-b(C_2)}f_{D_2'}f_{b(C_2)-k+1} \\
& = f_{D_2}f_{D_1}f_{b(D_1)+b(D_2)-b(C)}f_{b(C)-k}
\end{align*}
which implies the statement of the lemma.
\end{enumerate}
All together, by induction, the result is proved.
\end{proof}

\begin{lemma}\label{old claim 3}
Let $D_1,D_2 \in \mathcal{RCCD}$ and let $G_i = \mathcal{T}(D_i)$.
For every choice of $j$ with $j \geq b(D_2)-1$ and $b(D_1) \geq j-b(D_2)+1$ and for every shuffle of the first $j$ labels of $G_1$ and the first $b(D_2)-1$ labels of $G_2$, there is a unique $T$ in $\mathcal{T}(\mathcal{RCCD})$ with $G_1$ as left child and $G_2$ as right child. 
\end{lemma}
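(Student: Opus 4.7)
My plan is to prove the lemma by induction on $|D_1|+|D_2|$, combined with the characterization of $\mathcal{T}(\mathcal{RCCD})$ given above (property $P1$ everywhere together with $P2$ recursively). Writing $\ell = j - b(D_2) + 1$, the hypotheses become $\ell \in \{1,\dots,b(D_1)\}$, and a shuffle is encoded by a subset $S \subseteq \{1,\dots,j\}$ of size $\ell$, giving the count $\binom{j}{\ell}$ required by Proposition~\ref{second rec}. The base case $|D_1|=|D_2|=1$ forces $j=\ell=1$, and both sides collapse to the unique two-leaf tree labeled $1,2$.

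For the inductive step I first place the small labels as dictated by the shuffle: the $\ell$ elements of $S$ go on the first $\ell$ leaves of $G_1$ (in $G_1$-label order) in increasing order, the $b(D_2)-1$ elements of $\{1,\dots,j\}\setminus S$ go on the first $b(D_2)-1$ leaves of $G_2$ similarly, and label $j+1$ goes on the fully-right-branch leaf of $G_2$ so that the preceding proposition identifying $b(C)$ with this label yields $b(T)=j+1$. Property $P1$ at the root of $T$ is immediate because $\min S\leq j<j+1=b(T)$, and $P1$ inside $G_1$ and $G_2$ is inherited from $G_i\in\mathcal{T}(\mathcal{RCCD})$ since the relabelling is order-preserving. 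The remaining labels $\{j+2,\dots,\ell(T)\}$ are then placed by appealing to the induction hypothesis: $P2$ recursively forces label $\ell(T)$, and more generally the top labels, to lie in the smallest removable subtree containing $1$, which sits inside $G_1$ when $1\in S$ and inside $G_2$ otherwise; peeling off this subtree reduces the problem to a smaller instance of the lemma on the residual tree.

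For uniqueness and bijectivity, given any valid $T$ with left subtree $G_1$ and right subtree $G_2$ I read off $j=b(T)-1$, $\ell=j-b(D_2)+1$, and take $S$ to be the intersection of the $G_1$-leaf labels with $\{1,\dots,j\}$; Lemma~\ref{old claim 1} furnishes $b(D_1)\geq\ell$, and the construction above reconstructs exactly this $T$ from $(j,S)$, so distinct valid trees give distinct shuffles. The hardest case is $1\notin S$, where label $1$ sits on a leaf of $G_2$ and the smallest removable subtree containing $1$ lies strictly inside $G_2$; verifying that peeling it off leaves a tree still matching the lemma's hypotheses with correctly reduced parameters is the most delicate part, but it can be handled by directly analysing how the root-share decomposition of the underlying chord diagram interacts with the top-level split of $T$ into $G_1$ and $G_2$, after which the induction carries through.
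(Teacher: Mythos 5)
Your proof follows essentially the same route as the paper's: induct on the total number of leaves, place the shuffled small labels, peel off the smallest removable subtree containing the leaf labelled $1$ from whichever of $G_1$, $G_2$ holds it, apply the induction hypothesis to the residue, and reinsert with shifted labels; your "read off the shuffle and reconstruct" uniqueness argument is the same in substance as the paper's observation that removability of a subtree is insensitive to whether its labels are consecutive. The step you flag as delicate (that peeling leaves an instance still satisfying the hypotheses) is treated just as tersely in the paper's own proof, and your use of $\ell=j-b(D_2)+1$ labels from $G_1$ in the shuffle is exactly the reading needed to produce the count $\binom{j}{\ell}$ required in Proposition~\ref{second rec}.
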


\begin{proof}
Proceed again by induction.
Suppose $G_1$ and $G_2$ both have just one vertex.  So $T$ is the tree with a root with two leaves.  In view of P1 there is exactly one way to label this so that it is in the image of $\mathcal{T}$.  Furthermore, $b(D_2)-1 = 1-1=0$, so the shuffle is trivial.

Take $|g_1|+|g_2| > 2$.
Build a tree $T$ with $G_1$ as right child, $G_2$ as left child, but only label the vertices which had the first $j$ labels in $G_1$ and the first $b(D_2)-1$ labels in $G_2$, and label these leaves by the specified shuffle.

With this labelling there is a leaf with label $1$.  Say $1$ is on the $G_i$ side ($i=1$ or $i=2$).  Let $G_i'$ be $G_i$ with the smallest removable subtree of $G_i$ containing $1$ removed.  Let $T'$ be $T$ built with $G_i'$ in place of $G_i$.  By induction we have a unique labelling of $T'$ consistent with the given shuffle.  

Shift all the labels in $T'$ up by one, and reinsert the smallest removable subtree containing $1$ with all labels other than $1$ shifted to be larger than all labels in $T'$.  This gives a labelling of $T$ which is in the image $\mathcal{T}$.  

To show uniqueness, suppose there were another labelling of $T$ consistent with the given shuffle.  By removing largest removable subtrees containing $1$ as long as they match, we may assume that the two labellings of $T$ have different largest removable subtrees containing $1$.  However removability of a subtree is not sensitive to whether the labels are consecutive, but the two labellings of $T$ only differ by how they shuffle the labels of $G_1$ and $G_2$, so this is impossible.
\end{proof}

Now we are ready to prove Proposition \ref{second rec}

\begin{proof}[Proof of Proposition \ref{second rec}]
 Take a connected rooted chord diagram $C$ with $|C|\geq 2$, and hence $b(C) \geq 2$.  Let the associated tree be $T=\mathcal{T}(C)$.  Let $H_1$ and $H_2$ be the left and right subtrees respectively of $T$.  $H_1$ and $H_2$ satisfy P1 and satisfy P2 recursively since $T$ did; thus $H_1$ and $H_2$ are in the image of $\mathcal{T}$. Let $D_1$ and $D_2$ be the chord diagrams corresponding to $H_1$ and $H_2$.

Let $b(C) = j+1$ and let $b(D_2)= j-\ell+1$, so $\ell=b(C)-b(D_2)$.  $b(C)$ is the label of the leaf of the fully right branch of $T$ and $b(D_2)$ is the label of the leaf of the fully right branch of $H_2$, but these are the same leaf, the only difference is the label shifting.   Thus there must be $\ell$ labels on the $H_1$ side of $T$ which are less than $b(C)$. 

By Lemma \ref{old claim 1} $D_1$ and $D_2$ decompose $C$ into pieces which satisfy the correct conditions to be from a term on the right hand side of \eqref{key lemma}.  Furthermore by Lemma \ref{old claim 2}, they contribute the correct monomial in the $f_i$ to each side.

Now we need to argue the other way.   Suppose $D_1,D_2\in \mathcal{RCCD}$. Viewing $D_1$ and $D_2$ as contributing to a term on the left hand side of the lemma, we need to see that they can be reattached to get a chord diagram $C$, and that this can be done $\binom{j}{\ell}$ ways. 

Let $G_1=\mathcal{T}(D_1)$ and $G_2=\mathcal{T}(D_2)$.  Clearly, we wish to build a tree with $G_1$ as the right child of the root and $G_2$ as the left child.  The question is how to shuffle the leaf labels of $G_1$ and $G_2$ in building the new tree. Lemma \ref{old claim 3} tells us that the answer is $\binom{j}{\ell}$ times, as desired.
\end{proof}

\subsection{The main theorem}

\begin{thm}
\[
\gamma_i(x) = \frac{(-1)^i}{i!}\sum_{\substack{C\in \mathcal{RCCD} \\ b(C) \geq i}}x^{|C|}f_{C}
f_{b(C)-i} 
\]
solves the Dyson-Schwinger equation
\[
G(x,L) = 1 - xG(x,\partial_{-\rho})^{-1}(e^{-L\rho}-1)F(\rho) \big|_{\rho=0}
\]
where
\begin{align*}
F(\rho) & = \frac{f_{0}}{\rho} + f_1 + f_2\rho + f_3\rho^2 + \cdots \\
  G(x,L) & = 1 - \sum_{n \geq 1} \gamma_n(x)L^n
\end{align*}
\end{thm}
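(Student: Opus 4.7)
The plan is to substitute the ansatz $G(x,L) = 1 - \sum_{n \geq 1}\gamma_n(x)L^n$ into the Dyson--Schwinger equation and compare coefficients of powers of $L$. Expanding $e^{-L\rho}-1 = \sum_{n\geq 1}(-L)^n \rho^n/n!$ and commuting the scalar $L$ past the operator $G(x,\partial_{-\rho})^{-1}$, the $L^n$-coefficient yields the identity
\[
\gamma_n(x) \;=\; \frac{(-1)^n x}{n!}\,\bigl[\,G(x,\partial_{-\rho})^{-1}\,\rho^n F(\rho)\,\bigr]_{\rho=0}, \qquad n \geq 1.
\]
For $n=1$ this reduces to $\gamma_1(x) = -x[G^{-1}\rho F]_{\rho=0}$. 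Expanding $G^{-1}$ as a geometric series in $1-G$ turns this into a polynomial identity in $x$, the $f_j$, and the lower-order $\gamma_m$; the plan is to verify that the proposed $\gamma_1$ satisfies this identity directly. The lowest-order term is $x f_0$, corresponding to the unique chord diagram with one chord, and higher-order contributions match the refined Nijenhuis--Wilf recurrence $c_n = \sum_\ell (2\ell-1)c_\ell c_{n-\ell}$ weighted by $f_C$-monomials, which is precisely the content of Proposition \ref{gamma k in g} with $k=1$.

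For $n \geq 2$, the strategy is to reduce the $L^n$ coefficient equation to the recurrence \eqref{gamma k rec rephrased}, namely $\gamma_n = \tfrac{1}{n}\gamma_1(-1+2x\partial_x)\gamma_{n-1}$. Using the operator identity $G^{-1} = 1 + G^{-1}(1-G)$ together with Leibniz's rule for $(-\partial_\rho)^m(\rho^n F(\rho))$, one peels off one power of $\rho$ from $\rho^n$, at the cost of introducing $\gamma_m$ factors and a derivative in $\rho$. After reorganizing, the two surviving contributions at $\rho=0$ produce exactly the two pieces of $-1 + 2x\partial_x$: the $-1$ comes from the undifferentiated factor, while $2x\partial_x$ arises from differentiating the peeled-off $\rho$, combined with the two slot choices in the root-share decomposition (the same $2\ell-1$ factor that appears in the classical chord diagram recurrence, summed against the weight $x^{|C|}$).

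By Proposition \ref{gamma k in g} the proposed expansion satisfies precisely \eqref{gamma k rec rephrased}, so combined with the verified base case, induction on $n$ shows that the proposed $\gamma_n$ solve the $L^n$-coefficient equation of the DSE for every $n \geq 1$, which is equivalent to the full $G(x,L)$ solving the DSE. The main obstacle is the clean extraction of the operator $-1 + 2x\partial_x$ from $[G^{-1}\rho^n F]_{\rho=0}$: one must carefully track the interplay between the weight $x^{|C|}$, the $2\ell-1$ factor from the number of insertion slots, and the multinomial structure of the operator expansion. The tree decomposition of Section~\ref{RCCD section} together with Lemma \ref{first rec claim} and Proposition \ref{second rec} furnishes the bookkeeping that identifies each term in the operator expansion with a single contribution from the chord-diagram side, and this identification is what makes the reduction to \eqref{gamma k rec rephrased} transparent.
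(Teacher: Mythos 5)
Your overall skeleton (extract the $L^n$ coefficient, verify $n=1$ directly, reduce $n\geq 2$ to the recurrence \eqref{gamma k rec rephrased} and invoke Proposition \ref{gamma k in g}) matches the paper's, and the coefficient extraction $\gamma_n = \frac{(-1)^n}{n!}x\left[G(x,\partial_{-\rho})^{-1}\rho^n F(\rho)\right]_{\rho=0}$ is correct. However, you have assigned the two combinatorial inputs to the wrong halves of the argument, and this leaves the hard half unproved. The $n=1$ equation is \emph{not} ``precisely the content of Proposition \ref{gamma k in g} with $k=1$'': that proposition is stated and proved only for $k\geq 2$, and it relates $g_k$ to $g_{k-1}$; it says nothing about $g_1$ satisfying the fixed-point identity \eqref{main thm eqn}. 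Expanding $G^{-1}$ as a geometric series in the $n=1$ equation does not produce the Stein/Nijenhuis--Wilf factors $2\ell-1$ at all; each application of $\frac{d^\ell}{d\rho^\ell}$ to $\rho^j$ produces falling factorials, hence binomial coefficients $\binom{j}{\ell}$, and the identity one must verify is exactly the recurrence of Proposition \ref{second rec}. That proposition rests on the left/right-subtree decomposition of $\mathcal{T}(C)$ (not the root-share decomposition), with the $\binom{j}{\ell}$ coming from the shuffle count of Lemma \ref{old claim 3}. So the base case of your induction --- the only place where the tree machinery is genuinely needed --- is asserted rather than proved.

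Conversely, for $n\geq 2$ you propose to extract the operator $-1+2x\frac{d}{dx}$ from $\left[G^{-1}\rho^n F\right]_{\rho=0}$ and attribute its appearance to insertion-slot counting in the root-share decomposition. This conflates two logically independent facts: that any solution of \eqref{DSE} satisfies $\gamma_k = \frac{1}{k}\gamma_1\left(-1+2x\frac{d}{dx}\right)\gamma_{k-1}$ is a purely analytic statement about the Dyson--Schwinger equation (the paper imports it from \cite{kythesis}, Chapter 4, as the renormalization group equation), whereas the $2\ell-1$ slot count is what shows that the \emph{chord-diagram expansion} satisfies that same recurrence (Proposition \ref{gamma k in g}, via Lemma \ref{first rec claim}). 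Your sketch of the analytic step is too vague to check --- Leibniz applied to $\partial_{-\rho}^{m}(\rho^n F)$ does not visibly reorganize into $-1+2x\frac{d}{dx}$, and the factor of $2$ there has nothing a priori to do with chord diagrams --- so you must either carry out that computation in full or cite the RGE. Either way the proof remains incomplete until the $n=1$ identity is established by Proposition \ref{second rec} or an equivalent argument, since that is the base case anchoring the whole induction.
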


\begin{proof}
  By \cite{kythesis} chapter 4, the $\gamma_i$ which solve this Dyson-Schwinger equation satisfy 
\[
\gamma_k(x) = \frac{1}{k}\gamma_1(x)\left(-1+2x\frac{d}{dx}\right)\gamma_{k-1}(x
) \quad \text{for $k\geq 2$}.
\]
This $\gamma_k$ recurrence is a rephrasing of the renormalization group equation for the Dyson-Schwinger equation.

By Proposition \ref{gamma k in g} (see in particular the formulation in \eqref{gamma k rec rephrased}) the $\gamma_i$ defined in the statement of the theorem satisfy the same recurrence.  Thus it suffices to show that $\gamma_1$ as defined in the statement of the theorem satisfies the Dyson-Schwinger equation; namely, we only need to check that  
\begin{equation}\label{interior reduction}
  \gamma_1 = x \left(1 - \sum_{k \geq 1}\gamma_k \frac{d^k}{d(-\rho)^k}\right)^{-1} (-\rho)F(\rho) \big|_{\rho=0}
\end{equation}

To simplify signs let
\[
  g_i = (-1)^ii!\gamma_i
\]
This agrees with the definition of $g_i$ from Proposition \ref{gamma k in g}.
Rephrasing in terms of the $g_k$ and expanding the geometric series, \eqref{interior reduction} is equivalent to checking
\[
  g_1 = x\sum_{n\geq 0}\left(\sum_{\ell \geq 1}g_\ell \frac{1}{\ell!}\frac{d^\ell}{d\rho^\ell}\right)^n (f_{0} + f_1\rho+f_2\rho^2+\cdots)\big|_{\rho=0}
\]
Simplifying the right hand side we get
\begin{align*}
&x\sum_{n\geq 0}\left(\sum_{\ell \geq 1}g_\ell \frac{1}{\ell!}\frac{d^\ell}{d\rho^\ell}\right)^n (f_{0} + f_1\rho+f_2\rho^2+\cdots)\big|_{\rho=0} \\
& = xf_{0} + x\sum_{n\geq 1}\left(\sum_{\ell \geq 1}g_\ell \frac{1}{\ell!}\frac{d^\ell}{d\rho^\ell}\right)^n (f_{0} + f_1\rho+f_2\rho^2+\cdots)\big|_{\rho=0} \\
\end{align*}

So it suffices to show
\[
g_1= xf_{0} + x\sum_{n\geq 1}\left(\sum_{\ell \geq 1}g_\ell \frac{1}{\ell!}\frac{d^\ell}{d\rho^\ell}\right)^n (f_{0} + f_1\rho+f_2\rho^2+\cdots)\big|_{\rho=0}
\]
This is the content of the next lemma, following which the proof is complete.
\end{proof}

\begin{lemma}
Let
\[
  g_i = \sum_{\substack{C\in \mathcal{RCCD}\\b(C)\geq i}}x^{|C|}f_{C}f_{b(C)-i}
\]
then
\begin{equation}\label{main thm eqn}
g_1= xf_{0} + x\sum_{n\geq 1}\left(\sum_{\ell \geq 1}g_\ell \frac{1}{\ell!}\frac{d^\ell}{d\rho^\ell}\right)^n (f_{0} + f_1\rho+f_2\rho^2+\cdots)\big|_{\rho=0}
\end{equation}
\end{lemma}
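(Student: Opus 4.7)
The plan is to match coefficients of the $f_j$ on both sides of \eqref{main thm eqn}, reducing the identity to a clean combinatorial statement about the generating function
\[
  H_j := \sum_{\substack{C \in \mathcal{RCCD} \\ b(C)=j+1}} x^{|C|} f_C
\]
which I can control using Proposition~\ref{second rec}.

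First I would simplify the right-hand side via operator algebra. Since each $g_\ell$ commutes with $\rho$ and with $d/d\rho$, the $n$-fold power $D^n$ of $D := \sum_{\ell \geq 1} g_\ell \frac{1}{\ell!}\frac{d^\ell}{d\rho^\ell}$ expands so that the differentiations collect into a single $L$-th derivative with $L = \ell_1 + \cdots + \ell_n$; applied to $\sum_{k\geq 0} f_k \rho^k$ and evaluated at $\rho = 0$, only the term with $k = L$ survives and yields a multinomial coefficient, giving
\[
D^n\Big(\sum_{k\geq 0} f_k \rho^k\Big)\bigg|_{\rho=0} = \sum_{\ell_1,\ldots,\ell_n \geq 1} \binom{\ell_1+\cdots+\ell_n}{\ell_1,\ldots,\ell_n} g_{\ell_1}\cdots g_{\ell_n} f_{\ell_1+\cdots+\ell_n}.
\]
Hence the right-hand side of \eqref{main thm eqn} equals $xf_0 + x\sum_{j \geq 1} f_j\,R_j$, where
\[
R_j := \sum_{n=1}^{j} \sum_{\substack{\ell_1,\ldots,\ell_n \geq 1 \\ \ell_1+\cdots+\ell_n=j}} \binom{j}{\ell_1,\ldots,\ell_n} g_{\ell_1}\cdots g_{\ell_n}.
\]

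On the left-hand side, splitting $g_1 = \sum_C x^{|C|} f_C f_{b(C)-1}$ by the value of $b(C)-1$ gives $g_1 = \sum_{j \geq 0} f_j H_j$. For a connected chord diagram with at least two chords the root chord must cross some other chord (otherwise the intersection graph would be disconnected), so $b(C)=1$ occurs only for the single-chord diagram; thus $H_0 = x$, matching the $xf_0$ on the right. Matching the remaining $f_j$-coefficients, the lemma reduces to showing $H_j = x R_j$ for every $j \geq 1$.

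The final step is to extract a recurrence for the $H_j$ from Proposition~\ref{second rec}. Multiplying the proposition by $x^{i+1}$ and summing over $i \geq 1$ yields, after the reindexing $m = i-k+1$,
\[
  H_j = \sum_{\ell=1}^{j} \binom{j}{\ell} g_\ell H_{j-\ell} \quad (j \geq 1),
\]
and combined with $H_0 = x$ this gives $H_j = x R_j$ by induction on $j$: the $\ell = j$ summand contributes the $n=1$ term $xg_j$, while for each $1 \leq \ell < j$ substituting the inductive formula for $H_{j-\ell}$ and applying the multinomial identity $\binom{j}{\ell}\binom{j-\ell}{\ell_1,\ldots,\ell_{n-1}} = \binom{j}{\ell,\ell_1,\ldots,\ell_{n-1}}$ reproduces exactly the compositions of $j$ into $n \geq 2$ positive parts whose first part is $\ell$. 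The main obstacle is the operator-algebra reduction of the right-hand side into multinomial coefficients; once that is carried out the combinatorial content is entirely concentrated in Proposition~\ref{second rec}, and the remaining induction is routine.
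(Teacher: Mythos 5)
Your proposal is correct and follows essentially the same route as the paper: both reduce the identity to the convolution recurrence with binomial coefficients supplied by Proposition~\ref{second rec}, after checking that $b(C)=1$ forces $|C|=1$. The only difference is packaging — you expand the operator power fully into multinomial sums over compositions and solve the recurrence for $H_j=\sum_{b(C)=j+1}x^{|C|}f_C$ by induction on $j$, whereas the paper peels off one factor of the operator to get the same recurrence coefficientwise in $x$ and matches the two sides recurrence-against-recurrence.
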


\begin{proof}
  One can check directly that the linear term of $g_1$ is correct.
  
  For $i\geq 1$ we have
  \[
  [x^{i+1}]g_1 = \sum_{\substack{C\in\mathcal{RCCD}\\b(C)\geq 1\\|C|=i+1}}f_{C}f_{b(C)-1}
  \]
  while the coefficient of $x^{i+1}$ on the right hand side of \eqref{main thm eqn} is
  \[
  [x^{i}]\sum_{n\geq 1}\left(\sum_{\ell \geq 1}g_\ell \frac{1}{\ell!}\frac{d^\ell}{d\rho^\ell}\right)^n (f_{0} + f_1\rho+f_2\rho^2+\cdots)\big|_{\rho=0} \\
  \] 
  One possible way for these to be equal is if the $f_j$ explicitly showing on the right hand side matches with the $f_{b(C)-1}$ from the left hand side.  That is it would suffice to show for $1 \leq j \leq i$
  \begin{equation}\label{deriving lemma}
    \sum_{\substack{C\in\mathcal{RCCD}\\b(C) = j+1\\|C|=i+1}}f_{C} = [x^{i}]\sum_{n\geq 1}\left(\sum_{\ell \geq 1}g_\ell \frac{1}{\ell!}\frac{d^\ell}{d\rho^\ell}\right)^n \rho^{j}\big|_{\rho=0}
  \end{equation}
  To save space, let $G_\rho = \sum_{\ell \geq 1}g_\ell \frac{1}{\ell!}\frac{d^\ell}{d\rho^\ell}$, and let $F_{i,j} = [x^{i}]\sum_{n\geq 0}\left(\sum_{\ell \geq 1}g_\ell \frac{1}{\ell!}\frac{d^\ell}{d\rho^\ell}\right)^n \rho^{j}\big|_{\rho=0}$.  Note that for $j\geq 1$, the $n=0$ term does not contribute and so $F_{i,j}$ is the right hand side of \eqref{deriving lemma}.  Calculate, for $1 \leq j \leq i$
  \begin{align*}
    F_{i,j} = & [x^{i}]\sum_{n\geq 1}\left(G_\rho\right)^n \rho^{j}\big|_{\rho=0} \\
    & = \sum_{k=1}^{i}\left([x^k]G_\rho\right)\left([x^{i-k}]\sum_{n\geq 0}\left(G_\rho\right)^n\right)\rho^j\big|_{\rho=0} \\
    & = \sum_{k=1}^{i}\sum_{\ell=1}^{j}\binom{j}{\ell}\left([x^k]G_d \rho^{\ell}\big|_{\rho=0}\right)\left([x^{i-k}]\sum_{n\geq 0}\left(G_\rho\right)^n \rho^{j-\ell}\big|_{\rho=0}\right) \\
    & = \sum_{k=1}^{i}\sum_{\ell=1}^{j}\binom{j}{\ell}\left([x^k]g_\ell\right)F_{i-k,j-\ell} \\
    & =  \sum_{k=1}^{i}\sum_{\ell=1}^{j}\binom{j}{\ell}\left(\sum_{\substack{C\in\mathcal{RCCD}\\b(C)\geq \ell \\|C|=k}}f_{i(C)}f_{b(C)-\ell}\right)F_{i-k,j-\ell}
  \end{align*}

Thus we have a recurrence which gives $F_{i,j}$.  To prove the lemma it suffices to prove \eqref{deriving lemma}; that is, it suffices to prove that
\[
  F_{i,j} = \sum_{\substack{C\in\mathcal{RCCD}\\b(C) = j+1\\|C|=i+1}}f_{C}
\]
for $1\leq i \leq j$.  It still suffices to prove it with extended bounds $0\leq i \leq j$.
To do this we check the initial terms directly and then check that 
$
\sum_{\substack{C\in\mathcal{RCCD}\\b(C) = j+1\\|C|=i+1}}f_{C}
$
satisfies the recurrence for $F_{i,j}$.  The second of these is the content of Proposition \ref{second rec}.  For the first of these, if $j=0$ then 
\begin{align*}
F_{i,0} & = [x^{i}]\sum_{n\geq 0}\left(\sum_{\ell \geq 1}g_\ell \frac{1}{\ell!}\frac{d^\ell}{d\rho^\ell}\right)^n \rho^0\big|_{\rho=0} \\
& = [x^i]1 = \begin{cases} 1 & \text{if $i=0$} \\ 0 & \text{otherwise}\end{cases}
\end{align*}
while
\[
\sum_{\substack{C\in\mathcal{RCCD}\\b(C) = 0+1\\|C|=i+1}}f_{C} = \begin{cases} 1 & \text{if $i=0$} \\ 0 & \text{otherwise}\end{cases}
\]
as the only rooted connected cord diagram with $b(C)=1$ is the diagram with exactly one chord.
This completes the proof.
\end{proof}

\section{Consequences and conclusions}\label{conclusion section}

\subsection{Asymptotic analysis}

Given the Laurent series expansion $F(\rho)$ we obtained a formal solution $G(x,L)$ of the analytic Dyson--Schwinger equation \eqref{DSE}:

\begin{equation*}
\left \{
\begin{aligned}
&G(x,L)\,=\,1 - \sum_{k\in \mathbb{N}^*} \gamma_{k} (x) \, L^k \,,\\
& \gamma_{k}(x)\in \mathbb{C}[[x]]\,
\end{aligned}
\right .
\end{equation*}

\noindent with an explicit expression for the series $\gamma_{k}(x)$ in terms of the coefficients of $F(\rho)$,

\begin{equation*}
\gamma_{k}(x) \,=\, \sum_{\substack{ X \in \mathcal{RCCD} \\  b(x) \geq k}} f_{X}\,f_{b(X)-k} \,x^{|X|}\,.
\end{equation*}

So by choosing the analytic properties of $F(\rho)$ we are on good grounds to study the analytic properties of the $\gamma_{k}$. This will be done in a future work.\\

As a preview lets look at an example of Gevrey classification of the $\gamma_{k}$. Remember that we say $\gamma_{k}(x)$ is a series of Gevrey class $q\in \mathbb{R}_{+}$ if there are positive constant $K_{k}$ and $A_{k}$ such that for all $n\in \mathbb{N}$ its coefficients satisfy

\begin{equation*}
|\gamma_{k,n}| \, \leq\, A_{k} \, K_{k}^n  \, (n!)^q\,.
\end{equation*}

\noindent We have the following result.

\begin{prop}
 Assume that there is a positive constant $C$ such that for all $n\in \mathbb{N}$ the coefficients of $F(\rho)$ satisfy $|f_{n}| \leq C^{n+1}$, then for all $k \in \mathbb{N}^*$ the formal power series $\gamma_{k}(x)$ is of Gevrey class $1$.
\end{prop}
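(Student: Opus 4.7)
The plan is to bound the coefficient
\[
\gamma_{k,n} := [x^n]\gamma_k(x) = \sum_{\substack{X\in\mathcal{RCCD}(n) \\ b(X) \geq k}} f_X\, f_{b(X)-k}
\]
by separating the two sources of growth: a uniform exponential estimate $|f_X f_{b(X)-k}| \leq C^{2n-k}$ on each monomial, and a factorial estimate $c_n = |\mathcal{RCCD}(n)| \leq 2^n\, n!$ on the number of diagrams of fixed degree. The triangle inequality will then deliver the Gevrey $1$ bound at once.

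For the monomial estimate I would write $f_X f_{b(X)-k} = f_0^{n-r-1} f_{\delta_1} \cdots f_{\delta_r}\, f_{b(X)-k}$ where $\delta(X) = (\delta_1, \ldots, \delta_r)$, and sum the exponents of $C$ under the hypothesis $|f_j| \leq C^{j+1}$, getting
\[
|f_X\, f_{b(X)-k}| \leq C^{(n-1+\delta_1+\cdots+\delta_r) + (b(X)-k+1)}.
\]
The key combinatorial input, which is really the heart of the proof, is the identity read off directly from the definition: since $b(X)$ is the smallest terminal chord and the $\delta_i$ are the successive gaps between terminal chords in the intersection order, $b(X) + \delta_1 + \cdots + \delta_r$ is precisely the index of the largest terminal chord of $X$, and is therefore at most $n$. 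This collapses the exponent to $2n-k$, uniformly in $X$.

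For the counting estimate I would use that a rooted chord diagram of order $n$ is a matching of the $2n$ points on the boundary circle with a distinguished root, so $c_n$ is bounded above by the total number of such matchings, namely $(2n-1)!!$. The termwise inequality $2j-1 \leq 2j$ gives $(2n-1)!! \leq 2^n\, n!$. Combining,
\[
|\gamma_{k,n}| \leq c_n \cdot C^{2n-k} \leq C^{-k}\,(2C^2)^n\, n!,
\]
so taking $A_k = C^{-k}$ and $K_k = 2C^2$ exhibits $\gamma_k(x)$ as a series of Gevrey class $1$.

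There is no real obstacle: both inputs amount to unpacking definitions, and the only creative step is recognizing the conservation identity $b(X) + \sum_i \delta_i \leq n$ that makes the monomial bound uniform in $X$. It is worth remarking, though, that the factorial growth of $c_n$ alone (in fact one has $c_n \sim n!/e$) already forces the Gevrey class of $\gamma_k$ to be at least $1$, so class $1$ is the best one can hope for from this purely pointwise argument; the analytic hypothesis on $F$ only contributes the exponential factor $K_k^n$.
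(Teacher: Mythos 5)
Your proof is correct and follows essentially the same route as the paper's: a uniform exponential bound $C^{O(n)}$ on each monomial $f_X f_{b(X)-k}$ combined with the count $c_n \le (2n-1)!! \le 2^n\,n!$. Your version of the exponent bookkeeping is in fact cleaner than the paper's (which resorts to a vague $P(X)\,n+Q(X)$ decomposition), since you isolate the precise identity that $b(X)+\sum_i \delta_i$ equals the largest terminal chord and is hence at most $n$, giving the explicit constants $A_k=C^{-k}$, $K_k=2C^2$.
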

\begin{proof}
We need to show that for all $k \in \mathbb{N}^*$ and $n \in \mathbb{N}$
\begin{equation*}
\left | \sum_{|X|=n\,,\, b(X)\geq k} \,f_{X}\,f_{b(X)-k} \right | \leq A_{k}\,B_{k}^{n}\,n!
\end{equation*}
with $A_{k},B_{k} \in \mathbb{R}_{+}$. To do so we write the monomial $f_{X}=f_{0}^{p_{0}(X)} \cdots f_{n}^{p_{n}(X)} $ with $p_{i}(X)$ the number of times the factor $f_{i}$ appears in the product $f_{X}$.\\

\noindent We have $p_{0}(X)+\cdots+p_{n}(X)=n-1$, the length of $\bar{\delta}(X)$. So we get:
\begin{align*}
\left | \sum_{|X|=n\,,\,b(X)\geq k} f_{0}^{p_{0}(X)} \cdots f_{n}^{p_{n}(X)} \, f_{b(X)-k} \right | & \leq \sum_{|X|=n\,,\,b(X)\geq k} |f_{0}|^{p_{0}(X)} \cdots |f_{n}|^{p_{n}(X)} \, |f_{b(X)-k}|\\
& \leq \sum_{|X|=n\,,\,b(X)\geq k} C^{p_{0}(X)+2\,p_{1}(X)+ \cdots + (n+1)\,p_{n}(X)}\,C^{b(X)-k+1}\,.
\end{align*} 

\noindent But since $p_{0}(X)+\cdots+p_{n}(X)=n-1$, for each diagram $X$ there are some integers $P(X)$ and $Q(X)$ such that
\begin{equation*}
\sum_{i=0}^{n} (i+1)\,p_{i}(X) =P(X) \, n + Q(X). 
\end{equation*}

\noindent Hence with $k \leq b(X) \leq n$ we obtain the bound
\begin{equation*}
\left | \sum_{|X|=n\,,\, b(X)\geq k} \,f_{X}\,f_{b(X)-k} \right | \leq \sum_{|X|=n\,,\,b(X)\geq k} C^{P(X) n + Q(X) + b(X) -k + 1} \leq \sum_{|X|=n\,,\,b(X)\geq k} C^{P_{k} n + Q_{k}} \, c_{n,k} 
\end{equation*}

\noindent with $P_{k},Q_{k}$ some integers and $c_{n,k}$ the number of rooted connected chord diagrams of degree $n$ with smallest terminal chord larger than $k$. So for all $k \in \mathbb{N}$ this is bounded by the total number of rooted chord diagrams of degree $n$ i.e.
\begin{equation*}
c_{n,k} \leq (2n-1)!! \leq (2n)!! = 2^n \, n!\,.
\end{equation*}

\noindent Putting everything together we get the desired bound:

\begin{equation*}
\left | \sum_{|X|=n\,,\, b(X)\geq k} \,f_{X}\,f_{b(X)-k} \right | \leq C{Q_{k}}\,(2C^{P_{k}})^{n}\,n!\,.
\end{equation*}

\end{proof}

For the $\gamma_{k}$ satisfying the conditions of this proposition, applying the Borel transform gives series with a non zero radius of convergence so that they are amenable to a study of their Borel summability properties (see \cite{Bal}).

\subsection{$P$ and the differential equations}

In \cite{kythesis} more general Dyson-Schwinger equations, including the one case considered in the present paper, are converted into differential equations which are then analysed in some particular cases in \cite{vBKUY,vBKUY2}.  The principle difficulty of this method is that this conversion process builds a new series $P(x)$ out of the primitive graphs.  $P$ is not generally well understood and so all results must be conditional on assumptions on $P$.  

The chord diagram techniques considered above give a solution to the one particular Dyson-Schwinger equation \eqref{DSE}, and so in that case give
\[
  P(x) = \sum_{\substack{C\in \mathcal{RCCD} \\ b(C) \geq 1}}x^{|C|}f_{C}(f_{b(C)-2}-f_{b(C)-1})
\]
This shows us that in this case $P(x)$ has an expansion which is a modified form of the expansion $\gamma_1(x)$, with the final factor of $f_k$ replaced by $f_{k-1}-f_k$.  We expect a similar overall shape to hold for other Dyson-Schwinger equations which gives us some basis on which to evaluate the reasonableness of assumptions on $P$.

\subsection{Four-term relation}

We are tempted to try to make a connection with the classical subject of the study of algebraic structures on chord diagrams (and their linear / rooted version) linked to the theory of Vassiliev invariants of knots \cite{cdm}. Indeed non connected chord diagrams do not appear in the $\gamma_{k}$ expansions, which we can interpret as saying that the monomials already satisfy a one-term relation. Hence it seems legitimate to ask whether or not these monomials define weight systems on rooted chord diagrams. So do these products of $f_k$ satisfy a four-term relation? \\

Here we give a minimal (for the number of chords involved in the diagrams) counter-example proving that it is not the case in general, that is when we consider the coefficients of $F(\rho)$ as formal objects. Define a family of maps $M_{\alpha}: \mathcal{RCCD} \longrightarrow \mathbb{C}[f_{0},f_{1},...] $, $X \mapsto  M_{\alpha}(X)=f_{X}\,f_{b(X)-\alpha(X)}$ which associates to $X$ a monomial in the $f_{k}$ where $\alpha: \mathcal{RCCD} \longrightarrow \ZZ$ plays the role of a parameter. Then the expansion $\gamma_{k}(x)$ corresponds to the choice of $\alpha$ being the constant map equal to $k$. We are going to show that, for $\alpha$ satisfying some reasonable properties, the map $M_{\alpha}$ does not satisfy a four-term relation. But lets make these notions precise.

\begin{definition}
A sequence $(A,B,C,D)$ of rooted connected chord diagrams is said to be in a \textbf{four-term configuration} if they differ precisely in the following way:

\begin{figure}[H]
\begin{center}
\hspace{-1cm}
\begin{pspicture}(0,0)(12.8,3.2)

\psarc[linecolor=red,linestyle=solid]{-}(1.25,1.9){1.15}{0}{30}
\psarc[linecolor=red,linestyle=solid]{-}(1.25,1.9){1.15}{150}{190}
\psarc[linecolor=red,linestyle=solid]{-}(1.25,1.9){1.15}{245}{300}
\psarc[linecolor=black,linestyle=dashed]{-}(1.25,1.9){1.15}{38}{150}
\psarc[linecolor=black,linestyle=dashed]{-}(1.25,1.9){1.15}{190}{245}
\psarc[linecolor=black,linestyle=dashed]{-}(1.25,1.9){1.15}{300}{360}
\psrotate(1.25,1.9){150}{\psarc[linecolor=red]{*-*}(2,4.75){2.5}{233}{278}}
\psrotate(1.25,1.9){245}{\psarc[linecolor=red]{*-*}(2,4.75){2.5}{233}{278}}
\rput(1.25,0.40){\small$(A)$}

\rput(3.1,1.900){,}

\psrotate(5,1.9){135}{\psarc[linecolor=red]{*-*}(5.75,4.75){2.5}{233}{278}}
\psrotate(5,1.9){260}{\psarc[linecolor=red]{*-*}(5.75,4.75){2.5}{233}{278}}
\psarc[linecolor=red,linestyle=solid]{-}(5,1.9){1.15}{10}{50}
\psarc[linecolor=red,linestyle=solid]{-}(5,1.9){1.15}{135}{180}
\psarc[linecolor=red,linestyle=solid]{-}(5,1.9){1.15}{245}{300}
\psarc[linecolor=black,linestyle=dashed]{-}(5,1.9){1.15}{54}{130}
\psarc[linecolor=black,linestyle=dashed]{-}(5,1.9){1.15}{185}{245}
\psarc[linecolor=black,linestyle=dashed]{-}(5,1.9){1.15}{300}{360}
\rput(5,0.40){\small$(B)$}

\rput(6.85,1.900){,}

\psarc[linecolor=red,linestyle=solid]{-}(8.75,1.9){1.15}{10}{50}
\psarc[linecolor=red,linestyle=solid]{-}(8.75,1.9){1.15}{135}{180}
\psarc[linecolor=red,linestyle=solid]{-}(8.75,1.9){1.15}{245}{300}
\psarc[linecolor=black,linestyle=dashed]{-}(8.75,1.9){1.15}{54}{130}
\psarc[linecolor=black,linestyle=dashed]{-}(8.75,1.9){1.15}{185}{245}
\psarc[linecolor=black,linestyle=dashed]{-}(8.75,1.9){1.15}{300}{360}
\psrotate(8.75,1.9){25}{\psarc[linecolor=red]{*-*}(9.5,4.75){2.5}{233}{278}}
\psrotate(8.75,1.9){255}{\psarc[linecolor=red]{*-*}(9.5,4.75){2.5}{233}{278}}
\rput(8.75,0.40){\small$(C)$}

\rput(10.65,1.900){,}

\psrotate(12.5,1.9){5}{
\psarc[linecolor=red,linestyle=solid]{-}(12.5,1.9){1.15}{10}{50}
\psarc[linecolor=red,linestyle=solid]{-}(12.5,1.9){1.15}{135}{180}
\psarc[linecolor=red,linestyle=solid]{-}(12.5,1.9){1.15}{245}{300}
\psarc[linecolor=black,linestyle=dashed]{-}(12.5,1.9){1.15}{54}{130}
\psarc[linecolor=black,linestyle=dashed]{-}(12.5,1.9){1.15}{185}{245}
\psarc[linecolor=black,linestyle=dashed]{-}(12.5,1.9){1.15}{300}{360}
}
\psrotate(12.5,1.9){15}{\psarc[linecolor=red]{*-*}(13.25,4.75){2.5}{233}{278}}
\psrotate(12.5,1.9){275}{\psarc[linecolor=red]{*-*}(13.25,4.75){2.5}{233}{278}}
\rput(12.5,0.4){\small$(D)$}

\rput(14.5,1.900){.}

\end{pspicture}
\end{center}
\end{figure}
\end{definition}

\noindent Now if $(A,B,C,D)$ is in a four-term configuration we want to evaluate:
\begin{equation*}
\left<(A,B,C,D),M_{\alpha} \right>_{4T}= M_{\alpha}(A)-M_{\alpha}(B)+M_{\alpha}(C)-M_{\alpha}(D).
\end{equation*}

\begin{definition}
We say that $M_{\alpha}$ satisfies a four-term relation if for all the sequences $(A,B,C,D)$ in four-term configurations we have $\left<(A,B,C,D),M_{\alpha} \right>_{4T}=0$.
\end{definition}

\noindent We show that for a reasonable $\alpha$ the map $M_{\alpha}$ does not satisfy a four-term relation.

\begin{prop}
If the map $M_{\alpha}$ satisfies a four-term relation then $\alpha: \mathcal{RCCD} \longrightarrow \mathbb{Z}$ is a multivalued map.
\end{prop}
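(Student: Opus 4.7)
My plan is to prove the proposition by contradiction: assume $\alpha$ is a well-defined single-valued function on $\mathcal{RCCD}$ making $M_\alpha$ satisfy the four-term relation, and then exhibit a four-term configuration on which $\langle (A,B,C,D), M_\alpha\rangle_{4T}$ cannot vanish as a polynomial identity in the formal variables $f_0, f_1, f_2, \ldots$

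First I would select the minimal nontrivial four-term configuration (from the picture this appears to involve five chords, two of which are the ``moving'' red chords and three of which form a fixed dashed skeleton). For each of the four diagrams $A, B, C, D$ I would carry out the algorithm of Section~\ref{RCCD section}: draw the directed intersection graph, compute the intersection permutation $\sigma_X$ via the recursive block decomposition, identify the terminal chords in intersection order, read off the gap sequence $\bar\delta(X)$, and write down the monomial $f_X = f_0^{n-k-1}\,f_{\delta_1}\cdots f_{\delta_k}$. Because the four diagrams differ only in a localized region, a large common factor can be pulled out of all four $f_X$, and the four-term sum reduces to a small universal expression in finitely many $f_j$ depending on $\alpha$ only through the factors $f_{b(X)-\alpha(X)}$.

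Second, treating the $f_j$ as algebraically independent indeterminates, the vanishing of this universal polynomial is equivalent to a pairing of its monomials with opposite signs. Any such pairing forces strict equalities between the shifted indices $b(A)-\alpha(A), b(B)-\alpha(B), b(C)-\alpha(C), b(D)-\alpha(D)$ as well as constraints identifying the remaining gap entries. Since the values $b(A), b(B), b(C), b(D)$ are fixed by the combinatorics of the four-term configuration (and in fact typically coincide up to a uniform shift coming from the local move), these equalities pin down concrete integer relations that $\alpha$ must satisfy on the four diagrams. I would then exhibit either a second four-term configuration that shares one of the four diagrams, or a symmetry operation on the first configuration producing an equivalent one, which imposes a different integer relation on $\alpha$ at the shared diagram; the two incompatible requirements force $\alpha$ to assume distinct values there, so $\alpha$ cannot be single-valued.

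The main obstacle will be the delicate interaction between the localized four-term move and the global recursive reordering of chords into intersection order: moving the endpoint of one chord can in principle reshuffle the whole permutation $\sigma_X$, and hence alter $\bar\delta$ in ways that propagate far from the site of the move. I expect to tame this by choosing the four-term configuration so that the local move is confined inside a single block of the recursive procedure defining $\sigma_X$, which localizes its effect to a small change in at most one or two gap entries and in $b(X)$. Once this localization is established, the polynomial comparison above becomes an explicit finite check in a handful of $f_j$'s, and the multivaluedness conclusion follows by comparing two configurations.
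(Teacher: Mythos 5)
Your plan is essentially the paper's proof: the authors exhibit two four-term configurations of $4$-chord diagrams sharing a common diagram $X$, compute the monomials $f_{X}\,f_{b(X)-\alpha(X)}$ treating the $f_j$ as independent indeterminates, and observe that vanishing of the first sum forces $\alpha(X)=2$ while vanishing of the second forces $\alpha(X)=1$, so $\alpha$ must be multivalued. The only work your proposal leaves undone is the explicit choice of the two configurations and the computation of the monomials, which the paper carries out directly.
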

\begin{proof}
We exhibit a chord diagram $X$ with minimal degree such that $\alpha(X)$ depends on the four-term configuration where we find it. So consider the following four-term configurations:

\begin{figure}[H]
\begin{center}
\hspace{-1cm}
\begin{pspicture}(0,0.5)(12.8,3.4)

\pscircle[linestyle=solid](1.25,1.9){1.5}
\psarc[linecolor=red,linestyle=solid]{*-*}(-1.45,-0.3){3}{14}{64.5}
\psrotate(1.25,1.9){100}{\psarc[linecolor=red,linestyle=solid]{*-*}(-1.45,-0.3){3}{14}{64.5}}
\psrotate(1.25,1.9){-65}{\psarc[linecolor=black,linestyle=solid]{*-*}(-1.45,-0.3){3}{14}{64.5}}
\psarc[linecolor=black,linestyle=solid]{*-*}(1.25,-4){6}{75.5}{104.5}
\pscircle(1.12,3.38){0.12}
\rput(1.25,0.10){\small$(A)$}

\rput(3.1,1.900){,}

\pscircle[linestyle=solid](5,1.9){1.5}
\psrotate(5,1.9){-15}{\psarc[linecolor=red,linestyle=solid]{*-*}(2.3,-0.3){3}{14}{64.5}}
\psrotate(5,1.9){120}{\psarc[linecolor=red,linestyle=solid]{*-*}(2.3,-0.3){3}{14}{64.5}}
\psrotate(5,1.9){-65}{\psarc[linecolor=black,linestyle=solid]{*-*}(2.3,-0.3){3}{14}{64.5}}
\psarc[linecolor=black,linestyle=solid]{*-*}(5,-4){6}{75.5}{104.5}
\pscircle(4.87,3.38){0.12}
\rput(5,0.10){\small$(B)$}

\rput(6.85,1.900){,}

\pscircle[linestyle=solid](8.75,1.9){1.5}
\psrotate(8.75,1.9){100}{\psarc[linecolor=red,linestyle=solid]{*-*}(6.05,-0.3){3}{14}{64.5}}
\psrotate(8.75,1.9){240}{\psarc[linecolor=red,linestyle=solid]{*-*}(6.05,-0.3){3}{14}{64.5}}
\psrotate(8.75,1.9){-65}{\psarc[linecolor=black,linestyle=solid]{*-*}(6.05,-0.3){3}{14}{64.5}}
\psarc[linecolor=black,linestyle=solid]{*-*}(8.75,-4){6}{75.5}{104.5}
\pscircle(8.62,3.38){0.12}
\rput(8.75,0.10){\small$(X)$}

\rput(10.65,1.900){,}

\pscircle[linestyle=solid](12.5,1.9){1.5}
\psrotate(12.5,1.9){125}{\psarc[linecolor=red,linestyle=solid]{*-*}(9.8,-0.3){3}{14}{64.5}}
\psrotate(12.5,1.9){225}{\psarc[linecolor=red,linestyle=solid]{*-*}(9.8,-0.3){3}{14}{64.5}}
\psrotate(12.5,1.9){-65}{\psarc[linecolor=black,linestyle=solid]{*-*}(9.8,-0.3){3}{14}{64.5}}
\psarc[linecolor=black,linestyle=solid]{*-*}(12.5,-4){6}{75.5}{104.5}
\pscircle(12.37,3.38){0.12}
\rput(12.5,0.1){\small$(D)$}


\end{pspicture}
\end{center}
\end{figure}

\noindent which gives the sum

\begin{equation*}
\left<(A,B,X,D),M_{\alpha} \right>_{4T}=f_{0}^2 \left( f_{0}f_{4-\alpha(A)} - f_{0}f_{4-\alpha(B)} + f_{2}f_{2-\alpha(X)} - f_{0}f_{4-\alpha(D)} \right)
\end{equation*}

\noindent and

\begin{figure}[H]
\begin{center}
\hspace{-1cm}
\begin{pspicture}(0,0.5)(12.8,3.4)

\pscircle[linestyle=solid](1.25,1.9){1.5}
\psarc[linecolor=red,linestyle=solid]{*-*}(-1.45,-0.3){3}{14}{64.5}
\psrotate(1.25,1.9){100}{\psarc[linecolor=red,linestyle=solid]{*-*}(-1.45,-0.3){3}{14}{64.5}}
\psrotate(1.25,1.9){-180}{\psarc[linecolor=black,linestyle=solid]{*-*}(-1.45,-0.3){3}{14}{64.5}}
\psarc[linecolor=black,linestyle=solid]{*-*}(1.25,-4){5}{77.5}{102.5}
\pscircle(0.17,0.88){0.12}
\rput(1.25,0.10){\small$(A')$}

\rput(3.1,1.900){,}

\pscircle[linestyle=solid](5,1.9){1.5}
\psrotate(5,1.9){-15}{\psarc[linecolor=red,linestyle=solid]{*-*}(2.3,-0.3){3}{14}{64.5}}
\psrotate(5,1.9){120}{\psarc[linecolor=red,linestyle=solid]{*-*}(2.3,-0.3){3}{14}{64.5}}
\psrotate(5,1.9){-180}{\psarc[linecolor=black,linestyle=solid]{*-*}(2.3,-0.3){3}{14}{64.5}}
\psarc[linecolor=black,linestyle=solid]{*-*}(5,-4){5}{77.5}{102.5}
\pscircle(3.92,0.88){0.12}
\rput(5,0.10){\small$(X)$}

\rput(6.85,1.900){,}

\pscircle[linestyle=solid](8.75,1.9){1.5}
\psrotate(8.75,1.9){100}{\psarc[linecolor=red,linestyle=solid]{*-*}(6.05,-0.3){3}{14}{64.5}}
\psrotate(8.75,1.9){240}{\psarc[linecolor=red,linestyle=solid]{*-*}(6.05,-0.3){3}{14}{64.5}}
\psrotate(8.75,1.9){-180}{\psarc[linecolor=black,linestyle=solid]{*-*}(6.05,-0.3){3}{14}{64.5}}
\psarc[linecolor=black,linestyle=solid]{*-*}(8.75,-4){5}{77.5}{102.5}
\pscircle(7.67,0.88){0.12}
\rput(8.75,0.10){\small$(C')$}

\rput(10.65,1.900){,}

\pscircle[linestyle=solid](12.5,1.9){1.5}
\psrotate(12.5,1.9){125}{\psarc[linecolor=red,linestyle=solid]{*-*}(9.8,-0.3){3}{14}{64.5}}
\psrotate(12.5,1.9){225}{\psarc[linecolor=red,linestyle=solid]{*-*}(9.8,-0.3){3}{14}{64.5}}
\psrotate(12.5,1.9){-180}{\psarc[linecolor=black,linestyle=solid]{*-*}(9.8,-0.3){3}{14}{64.5}}
\psarc[linecolor=black,linestyle=solid]{*-*}(12.5,-4){5}{77.5}{102.5}
\pscircle(11.42,0.88){0.12}
\rput(12.5,0.1){\small$(D')$}


\end{pspicture}
\end{center}
\end{figure}

\noindent which gives the sum

\begin{equation*}
\left<(A',X,C',D'),M_{\alpha} \right>_{4T}=f_{0}^2 \left( f_{1}f_{3-\alpha(A')} - f_{2}f_{2-\alpha(X)} + f_{0}f_{4-\alpha(C')} - f_{0}f_{4-\alpha(D')} \right)\,.
\end{equation*}

\noindent So if $M_{\alpha}$ satisfies a four-term relation we must have at the same time $\alpha(X)=2$ and $\alpha(X)=1$ i.e. $\alpha$ is multivalued.

\end{proof}

As a consequence there can be no four-term relation linking the monomials appearing in the series $\gamma_{k}$ since the map $\alpha$ is a constant map in this situation.\\

Of course this does not rule out the possibility of a four-term relation holding for some specific $F(\rho)$ and it would be interesting to exhibit non trivial examples of such a situation. There might also be a four-term relation satisfied by $M_{\alpha}$ if we allow a sum over all possible roots in a four-term configuration. It is finally possible that there are non-trivial general relations among the monomials $M_{\alpha}(X)$ that are not related to the algebra of linear chord diagrams. Investigating these questions would require a more detailed understanding of the distribution of terminal chords in the rooted connected chord diagrams and will be done in a future work.

\appendix

\section{The objects}\label{obj appendix}

The following table contains all the rooted connected chord diagrams up to $4$ chords together with their corresponding rooted planar binary trees. The chords of the diagrams and the leaves of the trees are labelled in the intersection order.


\begin{figure}[H]
 \begin{center}
 \hspace{3cm}
\begin{pspicture}(0,1)(12.8,1)
\rput(5,1){
\psshadowbox{\bf Chord Diagrams and their Planar Binary Trees ; $n=1$ }
}
\end{pspicture}
\end{center}
\end{figure} \hspace{-0.5cm}

\begin{figure}[H]

 \begin{center}
 \hspace{3cm}
\begin{pspicture}(0,0)(12.8,3.2)
\pscircle(2.5,1.900){1.25}
\psline{*-*}(2.5,0.65)(2.5,3.15)
\pscircle[linecolor=red](2.5,0.65){0.12}
\rput(2.5,0.4){\tiny$1$}

\rput(7,2.1){
\pstree[treesep=0.5cm,levelsep=0.7cm]{\Tc*{0pt}}{\pstree[treesep=1.2cm,levelsep=0.5cm]{\Tc*{1.2pt}}{}}
}
\pscircle[linecolor=black](7,2){0.12}
\rput(7,1.7){\tiny$1$}

\end{pspicture}
\end{center}

\end{figure} \hspace{0.5cm}


\begin{figure}[H]
 \begin{center}
 \hspace{3cm}
\begin{pspicture}(0,1)(12.8,1)
\rput(5,1){
\psshadowbox{\bf Chord Diagrams and their Planar Binary Trees ; $n=2$ }
}
\end{pspicture}
\end{center}
\end{figure} \hspace{-0.5cm}

\begin{figure}[H]

 \begin{center}
 \hspace{3cm}
\begin{pspicture}(0,0)(12.8,3.2)
\pscircle(2.5,1.900){1.25}
\psline{*-*}(2.5,0.65)(2.5,3.15)
\psarc[linecolor=black]{*-*}(2.5,3.15){1.3}{212}{328}
\pscircle[linecolor=red](2.5,0.65){0.12}
\rput(2.5,0.4){\tiny$1$}
\rput(3.75,2.6){\tiny$2$}

\rput(7,2){
\pstree[treesep=0.5cm,levelsep=0.5cm]{\Tc*{0pt}}{\pstree[treesep=1.2cm,levelsep=0.7cm]{\Tc*{1.2pt}}{\Tc*{1.2pt} \Tc*{1.2pt}}}
}
\pscircle(7,2.1){0.12}
\rput(6.35,1.2){\tiny$1$}
\rput(7.65,1.2){\tiny$2$}

\end{pspicture}
\end{center}

\end{figure} \hspace{0.5cm}


\begin{figure}[H]
 \begin{center}
 \hspace{3cm}
\begin{pspicture}(0,1)(12.8,1)
\rput(5,1){
\psshadowbox{\bf Chord Diagrams and their Planar Binary Trees ; $n=3$ }
}
\end{pspicture}
\end{center}
\end{figure} \hspace{-0.5cm}

\begin{figure}[H]

 \begin{center}
 \hspace{3cm}
\begin{pspicture}(0,0)(12.8,3.2)
\pscircle(-2,1.900){1.25}
\psline{*-*}(-2,0.65)(-2,3.15)
\psarc[linecolor=black]{*-*}(-2,3.15){1}{204}{336}
\psrotate(-2,1.9){180}{\psarc[linecolor=black]{*-*}(-2,3.15){1}{204}{336}}
\pscircle[linecolor=red](-2,0.65){0.12}
\rput(-2,0.4){\tiny$1$}
\rput(-0.9,0.9){\tiny$2$}
\rput(-0.9,2.9){\tiny$3$}

\rput(2.2,2){
\pstree[treesep=0.5cm,levelsep=0.5cm]{\Tc*{0pt}}{\pstree[treesep=1.2cm,levelsep=0.7cm]{\Tc*{1.2pt}}{\pstree[treesep=1.2cm,levelsep=0.7cm]{\Tc*{1.2pt}}{\Tc*{1.2pt} \Tc*{1.2pt}} \Tc*{1.2pt}}}
}
\pscircle(2.525,2.45){0.12}
\rput(1.25,0.9){\tiny$1$}
\rput(2.55,0.9){\tiny$3$}
\rput(3.175,1.6){\tiny$2$}

\pscircle(7,1.900){1.25}
\psarc[linecolor=black]{*-*}(7.3,0.5){1}{44}{160}
\psrotate(7,1.9){60}{\psarc[linecolor=black]{*-*}(7.3,0.5){1}{44}{160}}
\psrotate(7,1.9){120}{\psarc[linecolor=black]{*-*}(7.3,0.5){1}{44}{160}}
\pscircle[linecolor=red](6.35,0.835){0.12}
\rput(6.35,0.6){\tiny$1$}
\rput(7.65,0.6){\tiny$2$}
\rput(8.45,1.85){\tiny$3$}

\rput(11.8,2){
\pstree[treesep=0.5cm,levelsep=0.5cm]{\Tc*{0pt}}{\pstree[treesep=1.2cm,levelsep=0.7cm]{\Tc*{1.2pt}}{\Tc*{1.2pt} \pstree[treesep=1.2cm,levelsep=0.7cm]{\Tc*{1.2pt}}{\Tc*{1.2pt} \Tc*{1.2pt}} }}
}
\pscircle[linecolor=black](11.475,2.45){0.12}
\rput(11.5,0.9){\tiny$2$}
\rput(12.765,0.9){\tiny$3$}
\rput(10.85,1.6){\tiny$1$}

\end{pspicture}
\end{center}

\end{figure} \hspace{0.5cm}

\begin{figure}[H]

 \begin{center}
 \hspace{3cm}
\begin{pspicture}(0,0)(12.8,3.2)

\psrotate(-2,1.9){130}{
\pscircle(-2,1.900){1.25}
\psline{*-*}(-2,0.65)(-2,3.15)
\psarc[linecolor=black]{*-*}(-2,3.15){1}{204}{336}
\psrotate(-2,1.9){180}{\psarc[linecolor=black]{*-*}(-2,3.15){1}{204}{336}}
}
\pscircle[linecolor=red](-2.175,0.65){0.12}
\rput(-2.175,0.4){\tiny$1$}
\rput(-0.725,1.75){\tiny$2$}
\rput(-1.05,2.85){\tiny$3$}

\rput(2.2,2){
\pstree[treesep=0.5cm,levelsep=0.5cm]{\Tc*{0pt}}{\pstree[treesep=1.2cm,levelsep=0.7cm]{\Tc*{1.2pt}}{\Tc*{1.2pt} \pstree[treesep=1.2cm,levelsep=0.7cm]{\Tc*{1.2pt}}{\Tc*{1.2pt} \Tc*{1.2pt}} }}
}
\pscircle[linecolor=black](1.885,2.45){0.12}
\rput(1.89,0.9){\tiny$1$}
\rput(3.165,0.9){\tiny$3$}
\rput(1.25,1.6){\tiny$2$}

\pscircle(7,1.900){1.25}
\psline{*-*}(7,0.65)(7,3.15)
\psrotate(7,1.9){60}{\psline{*-*}(7,0.65)(7,3.15)}
\psrotate(7,1.9){120}{\psline{*-*}(7,0.65)(7,3.15)}
\pscircle[linecolor=red](7,0.65){0.12}
\rput(7,0.4){\tiny$1$}
\rput(8.3,1.2){\tiny$2$}
\rput(8.25,2.65){\tiny$3$}

\rput(11.8,2){
\pstree[treesep=0.5cm,levelsep=0.5cm]{\Tc*{0pt}}{\pstree[treesep=1.2cm,levelsep=0.7cm]{\Tc*{1.2pt}}{\pstree[treesep=1.2cm,levelsep=0.7cm]{\Tc*{1.2pt}}{\Tc*{1.2pt} \Tc*{1.2pt}} \Tc*{1.2pt}}}
}
\pscircle(12.125,2.45){0.12}
\rput(10.85,0.9){\tiny$1$}
\rput(12.15,0.9){\tiny$2$}
\rput(12.825,1.6){\tiny$3$}

\end{pspicture}
\end{center}

\end{figure} \hspace{0.5cm}


\begin{figure}[H]
 \begin{center}
 \hspace{3cm}
\begin{pspicture}(0,1)(12.8,1)
\rput(5,1){
\psshadowbox{\bf Chord Diagrams and their Planar Binary Trees ; $n=4$ }
}
\end{pspicture}
\end{center}
\end{figure} \hspace{-0.5cm}

\begin{figure}[H]

 \begin{center}
 \hspace{3cm}
\begin{pspicture}(0,0)(12.8,3.2)

\pscircle(-2,1.900){1.25}
\psrotate(-2,1.9){200}{\psarc[linecolor=black]{*-*}(-2,3.45){1.1}{217}{323}}
\psrotate(-2,1.9){260}{\psarc[linecolor=black]{*-*}(-2,3.45){0.9}{217}{323}}
\psrotate(-2,1.9){320}{\psarc[linecolor=black]{*-*}(-2,3.45){0.9}{217}{323}}
\psrotate(-2,1.9){380}{\psarc[linecolor=black]{*-*}(-2,3.45){0.9}{217}{323}}

\pscircle[linecolor=red](-2.515,0.77){0.12}

\rput(-2.495,0.5){\tiny$1$}
\rput(-0.925,0.95){\tiny$2$}
\rput(-0.63,2.2){\tiny$3$}
\rput(-1.52,3.25){\tiny$4$}

\rput(2.2,2){
\pstree[treesep=0.5cm,levelsep=0.5cm]{\Tc*{0pt}}{\pstree[treesep=1.2cm,levelsep=0.7cm]{\Tc*{1.2pt}}{\Tc*{1.2pt} \pstree[treesep=1.2cm,levelsep=0.7cm]{\Tc*{1.2pt}}{\Tc*{1.2pt} \pstree[treesep=1.2cm,levelsep=0.7cm]{\Tc*{1.2pt}}{\Tc*{1.2pt} \Tc*{1.2pt}}} }}
}
\pscircle[linecolor=black](1.565,2.8){0.12}
\rput(0.93,1.95){\tiny$1$}
\rput(1.55,1.25){\tiny$2$}
\rput(2.2,0.55){\tiny$3$}
\rput(3.5,0.55){\tiny$4$}

\pscircle(7,1.900){1.25}
\psrotate(7,1.9){45}{\psline{*-*}(7,0.65)(7,3.15)}
\psrotate(7,1.9){-45}{\psline{*-*}(7,0.65)(7,3.15)}
\psarc[linecolor=black]{*-*}(5.6,3.2){1.2}{277}{357}
\psrotate(7,1.9){180}{\psarc[linecolor=black]{*-*}(5.6,3.2){1.2}{277}{357}}
\pscircle[linecolor=red](6.12,1.02){0.12}
\rput(6,0.7){\tiny$1$}
\rput(7.2,0.5){\tiny$2$}
\rput(8.1,1){\tiny$3$}
\rput(6.8,3.3){\tiny$4$}

\rput(11.8,2){
\pstree[treesep=0.5cm,levelsep=0.5cm]{\Tc*{0pt}}{\pstree[treesep=1.2cm,levelsep=0.7cm]{\Tc*{1.2pt}}{\Tc*{1.2pt} \pstree[treesep=1.2cm,levelsep=0.7cm]{\Tc*{1.2pt}}{\Tc*{1.2pt} \pstree[treesep=1.2cm,levelsep=0.7cm]{\Tc*{1.2pt}}{\Tc*{1.2pt} \Tc*{1.2pt}}} }}
}
\pscircle[linecolor=black](11.165,2.8){0.12}
\rput(10.53,1.95){\tiny$2$}
\rput(11.15,1.25){\tiny$1$}
\rput(11.8,0.55){\tiny$3$}
\rput(13.1,0.55){\tiny$4$}

\end{pspicture}
\end{center}

\end{figure} \hspace{0.5cm}

\begin{figure}[H]

 \begin{center}
 \hspace{3cm}
\begin{pspicture}(0,0)(12.8,3.2)

\pscircle(-2,1.900){1.25}
\psrotate(-2,1.9){390}{\psarc[linecolor=black]{*-*}(-2,3.2){1.2}{210}{330}}
\psrotate(-2,1.9){0}{\psline{*-*}(-2,0.65)(-2,3.15)}
\psrotate(-2,1.9){330}{\psarc[linecolor=black]{*-*}(-2,3.4){1.2}{216}{324}}
\psrotate(-2,1.9){140}{\psarc[linecolor=black]{*-*}(-2,4.1){2.1}{236}{304}}

\pscircle[linecolor=red](-2.0,0.66){0.12}
\rput(-2,0.4){\tiny$1$}
\rput(-1.25,0.65){\tiny$2$}
\rput(-0.6,2.1){\tiny$3$}
\rput(-1.25,3.2){\tiny$4$}

\rput(2.2,2){
\pstree[treesep=0.5cm,levelsep=0.5cm]{\Tc*{0pt}}{\pstree[treesep=1.2cm,levelsep=0.7cm]{\Tc*{1.2pt}}{\Tc*{1.2pt} \pstree[treesep=1.2cm,levelsep=0.7cm]{\Tc*{1.2pt}}{\Tc*{1.2pt} \pstree[treesep=1.2cm,levelsep=0.7cm]{\Tc*{1.2pt}}{\Tc*{1.2pt} \Tc*{1.2pt}}} }}
}
\pscircle[linecolor=black](1.565,2.8){0.12}
\rput(0.93,1.95){\tiny$3$}
\rput(1.55,1.25){\tiny$1$}
\rput(2.2,0.55){\tiny$2$}
\rput(3.5,0.55){\tiny$4$}

\psrotate(7,1.9){90}{
\pscircle(7,1.900){1.25}
\psrotate(7,1.9){45}{\psline{*-*}(7,0.65)(7,3.15)}
\psarc[linecolor=black]{*-*}(5.6,3.2){1.5}{277}{357}
\psrotate(7,1.9){180}{\psarc[linecolor=black]{*-*}(5.6,3.2){1.2}{277}{357}}
}
\psarc[linecolor=black]{*-*}(7.25,0.55){0.7}{43}{168}
\pscircle[linecolor=red](6.57,0.69){0.12}
\rput(6.57,0.45){\tiny$1$}
\rput(7.15,0.45){\tiny$2$}
\rput(8.3,2.2){\tiny$3$}
\rput(8,3){\tiny$4$}

\rput(11.8,2){
\pstree[treesep=0.5cm,levelsep=0.5cm]{\Tc*{0pt}}{\pstree[treesep=1.2cm,levelsep=0.7cm]{\Tc*{1.2pt}}{\Tc*{1.2pt} \pstree[treesep=1.2cm,levelsep=0.7cm]{\Tc*{1.2pt}}{\Tc*{1.2pt} \pstree[treesep=1.2cm,levelsep=0.7cm]{\Tc*{1.2pt}}{\Tc*{1.2pt} \Tc*{1.2pt}}} }}
}
\pscircle[linecolor=black](11.165,2.8){0.12}
\rput(10.53,1.95){\tiny$1$}
\rput(11.15,1.25){\tiny$3$}
\rput(11.8,0.55){\tiny$2$}
\rput(13.1,0.55){\tiny$4$}

\end{pspicture}
\end{center}

\end{figure} \hspace{0.5cm}

\begin{figure}[H]

 \begin{center}
 \hspace{3cm}
\begin{pspicture}(0,0)(12.8,3.2)

\psrotate(-2,1.9){90}{
\pscircle(-2,1.900){1.25}
\psrotate(-2,1.9){200}{\psarc[linecolor=black]{*-*}(-2,3.45){1.1}{217}{323}}
\psrotate(-2,1.9){260}{\psarc[linecolor=black]{*-*}(-2,3.45){0.9}{217}{323}}
\psrotate(-2,1.9){320}{\psarc[linecolor=black]{*-*}(-2,3.45){0.9}{217}{323}}
\psrotate(-2,1.9){380}{\psarc[linecolor=black]{*-*}(-2,3.45){0.9}{217}{323}}
}
\pscircle[linecolor=red](-2.845,0.88){0.12}
\rput(-2.845,0.6){\tiny$1$}
\rput(-0.75,1.3){\tiny$2$}
\rput(-1.05,2.85){\tiny$3$}
\rput(-2.5,3.31){\tiny$4$}

\rput(2.2,2){
\pstree[treesep=0.5cm,levelsep=0.5cm]{\Tc*{0pt}}{\pstree[treesep=1.2cm,levelsep=0.7cm]{\Tc*{1.2pt}}{\Tc*{1.2pt} \pstree[treesep=1.2cm,levelsep=0.7cm]{\Tc*{1.2pt}}{\Tc*{1.2pt} \pstree[treesep=1.2cm,levelsep=0.7cm]{\Tc*{1.2pt}}{\Tc*{1.2pt} \Tc*{1.2pt}}} }}
}
\pscircle[linecolor=black](1.565,2.8){0.12}
\rput(0.93,1.95){\tiny$2$}
\rput(1.55,1.25){\tiny$3$}
\rput(2.2,0.55){\tiny$1$}
\rput(3.5,0.55){\tiny$4$}

\psrotate(7,1.9){90}{
\pscircle(7,1.900){1.25}
\psrotate(7,1.9){45}{\psline{*-*}(7,0.65)(7,3.15)}
\psarc[linecolor=black]{*-*}(5.6,3.2){1.5}{277}{357}
\psrotate(7,1.9){180}{\psarc[linecolor=black]{*-*}(5.6,3.2){1.2}{277}{357}}
}
\psrotate(7,1.9){45}{\psline[linecolor=black]{*-*}(7,0.75)(7,3.23)}
\pscircle[linecolor=red](7.06,0.69){0.12}
\rput(7.06,0.45){\tiny$1$}
\rput(8.05,0.95){\tiny$2$}
\rput(8.35,2.2){\tiny$3$}
\rput(8.,2.9){\tiny$4$}

\rput(11.8,2){
\pstree[treesep=0.5cm,levelsep=0.5cm]{\Tc*{0pt}}{\pstree[treesep=1.2cm,levelsep=0.7cm]{\Tc*{1.2pt}}{\Tc*{1.2pt} \pstree[treesep=1.2cm,levelsep=0.7cm]{\Tc*{1.2pt}}{\Tc*{1.2pt} \pstree[treesep=1.2cm,levelsep=0.7cm]{\Tc*{1.2pt}}{\Tc*{1.2pt} \Tc*{1.2pt}}} }}
}
\pscircle[linecolor=black](11.165,2.8){0.12}
\rput(10.53,1.95){\tiny$3$}
\rput(11.15,1.25){\tiny$2$}
\rput(11.8,0.55){\tiny$1$}
\rput(13.1,0.55){\tiny$4$}

\end{pspicture}
\end{center}

\end{figure} \hspace{0.5cm}

\begin{figure}[H]

 \begin{center}
 \hspace{3cm}
\begin{pspicture}(0,0)(12.8,3.2)

\pscircle(-2,1.900){1.25}
\psrotate(-2,1.9){0}{\psline{*-*}(-2,0.65)(-2,3.15)}
\psrotate(-2,1.9){80}{\psline{*-*}(-2,0.65)(-2,3.15)}
\psrotate(-2,1.9){110}{\psline{*-*}(-2,0.65)(-2,3.15)}
\psarc[linecolor=black]{*-*}(-2,0.4){0.9}{33}{146}

\pscircle[linecolor=red](-2.745,0.89){0.12}
\rput(-2.845,0.65){\tiny$1$}
\rput(-2,0.45){\tiny$2$}
\rput(-0.55,1.65){\tiny$3$}
\rput(-0.65,2.3){\tiny$4$}

\rput(2.2,2){
\pstree[treesep=0.5cm,levelsep=0.5cm]{\Tc*{0pt}}{\pstree[treesep=1.4cm,levelsep=0.8cm]{\Tc*{1.2pt}}{\Tc*{1.2pt} \pstree[treesep=1.4cm,levelsep=0.7cm]{\Tc*{1.2pt}}{\pstree[treesep=0.7cm,levelsep=0.5cm]{\Tc*{1.2pt}}{\Tc*{1.2pt} \Tc*{1.2pt}} \pstree[treesep=1.2cm,levelsep=0.6cm]{\Tc*{1.2pt}}{}} }}
}
\pscircle[linecolor=black](1.825,2.8){0.12}
\rput(1.1,1.85){\tiny$1$}
\rput(1.45,0.65){\tiny$2$}
\rput(2.25,0.65){\tiny$3$}
\rput(3.3,1.1){\tiny$4$}

\psrotate(7,1.9){45}{
\pscircle(7,1.900){1.25}
\psrotate(7,1.9){10}{\psline{*-*}(7,0.65)(7,3.15)}
\psarc[linecolor=black]{*-*}(5.6,3.2){1.5}{277}{357}
\psrotate(7,1.9){-60}{\psarc[linecolor=black]{*-*}(5.6,3.2){1.6}{277}{357}}
\psrotate(7,1.9){150}{\psarc[linecolor=black]{*-*}(5.6,3.2){1.6}{277}{357}}
}

\pscircle[linecolor=red](6.15,0.9){0.12}
\rput(6.15,0.65){\tiny$1$}
\rput(7.1,0.47){\tiny$2$}
\rput(8.15,1.15){\tiny$3$}
\rput(7.5,3.15){\tiny$4$}

\rput(11.8,2){
\pstree[treesep=0.5cm,levelsep=0.5cm]{\Tc*{0pt}}{\pstree[treesep=1.4cm,levelsep=0.8cm]{\Tc*{1.2pt}}{\Tc*{1.2pt} \pstree[treesep=1.4cm,levelsep=0.7cm]{\Tc*{1.2pt}}{\pstree[treesep=0.7cm,levelsep=0.5cm]{\Tc*{1.2pt}}{\Tc*{1.2pt} \Tc*{1.2pt}} \pstree[treesep=1.2cm,levelsep=0.6cm]{\Tc*{1.2pt}}{}} }}
}
\pscircle[linecolor=black](11.425,2.8){0.12}
\rput(10.7,1.85){\tiny$2$}
\rput(11.05,0.65){\tiny$1$}
\rput(11.85,0.65){\tiny$3$}
\rput(12.9,1.1){\tiny$4$}

\end{pspicture}
\end{center}

\end{figure} \hspace{0.5cm}

\begin{figure}[H]

 \begin{center}
 \hspace{3cm}
\begin{pspicture}(0,0)(12.8,3.2)

\psrotate(-2,1.9){90}{
\pscircle(-2,1.900){1.25}
\psrotate(-2,1.9){10}{\psline{*-*}(-2,0.65)(-2,3.15)}

\psrotate(-2,1.9){10}{\psarc[linecolor=black]{*-*}(-2,0.4){0.9}{33}{146}}
\psrotate(-2,1.9){0}{\psarc[linecolor=black]{*-*}(-2,3.45){1.4}{221}{320}}
\psrotate(-2,1.9){40}{\psarc[linecolor=black]{*-*}(-2,3.45){1.55}{223}{318}}
}

\pscircle[linecolor=red](-2.756,0.85){0.12}

\rput(-2.756,0.6){\tiny$1$}
\rput(-1.75,0.5){\tiny$2$}
\rput(-0.8,1.34){\tiny$3$}
\rput(-0.75,2.1){\tiny$4$}

\rput(2.2,2){
\pstree[treesep=0.5cm,levelsep=0.5cm]{\Tc*{0pt}}{\pstree[treesep=1.4cm,levelsep=0.8cm]{\Tc*{1.2pt}}{\Tc*{1.2pt} \pstree[treesep=1.4cm,levelsep=0.7cm]{\Tc*{1.2pt}}{\pstree[treesep=0.7cm,levelsep=0.5cm]{\Tc*{1.2pt}}{\Tc*{1.2pt} \Tc*{1.2pt}} \pstree[treesep=1.2cm,levelsep=0.6cm]{\Tc*{1.2pt}}{}} }}
}
\pscircle[linecolor=black](1.825,2.8){0.12}
\rput(1.1,1.85){\tiny$3$}
\rput(1.45,0.65){\tiny$1$}
\rput(2.25,0.65){\tiny$2$}
\rput(3.3,1.1){\tiny$4$}

\pscircle(7,1.900){1.25}
\psrotate(7,1.9){45}{\psline{*-*}(7,0.65)(7,3.15)}
\psrotate(7,1.9){-45}{\psline{*-*}(7,0.65)(7,3.15)}
\psarc[linecolor=black]{*-*}(5.6,3.2){1.2}{277}{357}
\psrotate(7,1.9){180}{\psarc[linecolor=black]{*-*}(5.6,3.2){1.2}{277}{357}}
\pscircle[linecolor=red](7.2,0.65){0.12}
\rput(7.2,0.4){\tiny$1$}
\rput(8.05,0.95){\tiny$2$}
\rput(8.05,2.9){\tiny$3$}
\rput(6.75,3.35){\tiny$4$}

\rput(11.8,2){
\pstree[treesep=0.5cm,levelsep=0.5cm]{\Tc*{0pt}}{\pstree[treesep=1.4cm,levelsep=0.8cm]{\Tc*{1.2pt}}{\Tc*{1.2pt} \pstree[treesep=1.4cm,levelsep=0.7cm]{\Tc*{1.2pt}}{\pstree[treesep=0.7cm,levelsep=0.5cm]{\Tc*{1.2pt}}{\Tc*{1.2pt} \Tc*{1.2pt}} \pstree[treesep=1.2cm,levelsep=0.6cm]{\Tc*{1.2pt}}{}} }}
}
\pscircle[linecolor=black](11.425,2.8){0.12}
\rput(10.7,1.85){\tiny$1$}
\rput(11.05,0.65){\tiny$2$}
\rput(11.85,0.65){\tiny$4$}
\rput(12.9,1.1){\tiny$3$}

\end{pspicture}
\end{center}

\end{figure} \hspace{0.5cm}

\begin{figure}[H]

 \begin{center}
 \hspace{3cm}
\begin{pspicture}(0,0)(12.8,3.2)

\pscircle(-2,1.900){1.25}
\psline{*-*}(-2,0.65)(-2,3.15)
\psarc[linecolor=black]{*-*}(-2,3.15){1}{204}{336}
\psrotate(-2,1.9){180}{\psarc[linecolor=black]{*-*}(-2,3.15){1}{204}{336}}
\psrotate(-2,1.9){240}{\psarc[linecolor=black]{*-*}(-2,3.45){1}{217}{323}}

\pscircle[linecolor=red](-2,0.65){0.12}
\rput(-2,0.4){\tiny$1$}
\rput(-1.55,0.55){\tiny$2$}
\rput(-0.9,1){\tiny$3$}
\rput(-1,2.95){\tiny$4$}

\rput(2.2,2){
\pstree[treesep=0.5cm,levelsep=0.5cm]{\Tc*{0pt}}{\pstree[treesep=1.4cm,levelsep=0.8cm]{\Tc*{1.2pt}}{\Tc*{1.2pt} \pstree[treesep=1.4cm,levelsep=0.7cm]{\Tc*{1.2pt}}{\pstree[treesep=0.7cm,levelsep=0.5cm]{\Tc*{1.2pt}}{\Tc*{1.2pt} \Tc*{1.2pt}} \pstree[treesep=1.2cm,levelsep=0.6cm]{\Tc*{1.2pt}}{}} }}
}
\pscircle[linecolor=black](1.825,2.8){0.12}
\rput(1.1,1.85){\tiny$2$}
\rput(1.45,0.65){\tiny$1$}
\rput(2.25,0.65){\tiny$4$}
\rput(3.3,1.1){\tiny$3$}

\psrotate(7,1.9){225}{
\pscircle(7,1.900){1.25}
\psrotate(7,1.9){10}{\psline{*-*}(7,0.65)(7,3.15)}
\psarc[linecolor=black]{*-*}(5.6,3.2){1.5}{277}{357}
\psrotate(7,1.9){-60}{\psarc[linecolor=black]{*-*}(5.6,3.2){1.6}{277}{357}}
\psrotate(7,1.9){150}{\psarc[linecolor=black]{*-*}(5.6,3.2){1.6}{277}{357}}
}
\pscircle[linecolor=red](6.37,0.78){0.12}
\rput(6.37,0.5){\tiny$1$}
\rput(7.75,0.8){\tiny$2$}
\rput(8.05,1.1){\tiny$3$}
\rput(6.75,3.35){\tiny$4$}

\rput(11.8,1.9){
\pstree[treesep=0.5,levelsep=0.5cm]{\Tc*{0pt}}{\pstree[treesep=1.2cm,levelsep=1.2cm]{\Tc*{1.2pt}}{\pstree[treesep=1.2cm,levelsep=0.7cm]{\Tc*{1.2pt}}{\Tc*{1.2pt} \Tc*{1.2pt}} \pstree[treesep=1.2cm,levelsep=0.7cm]{\Tc*{1.2pt}}{\Tc*{1.2pt} \Tc*{1.2pt}}}
}}
\pscircle[linecolor=black](11.87,2.61){0.12}
\rput(9.95,0.55){\tiny$1$}
\rput(11.25,0.55){\tiny$2$}
\rput(12.5,0.55){\tiny$3$}
\rput(13.8,0.55){\tiny$4$}

\end{pspicture}
\end{center}

\end{figure} \hspace{0.5cm}

\begin{figure}[H]

 \begin{center}
 \hspace{3cm}
\begin{pspicture}(0,0)(12.8,3.2)

\pscircle(-2,1.900){1.25}
\psrotate(-2,1.9){10}{\psarc[linecolor=black]{*-*}(-2,3.45){1.4}{221}{319}}
\psrotate(-2,1.9){100}{\psarc[linecolor=black]{*-*}(-2,3.45){1.4}{221}{319}}
\psrotate(-2,1.9){190}{\psarc[linecolor=black]{*-*}(-2,3.45){1.4}{221}{319}}
\psrotate(-2,1.9){280}{\psarc[linecolor=black]{*-*}(-2,3.45){1.4}{221}{319}}

\pscircle[linecolor=red](-2.93,1.1){0.12}
\rput(-3,0.85){\tiny$1$}
\rput(-2.5,0.55){\tiny$2$}
\rput(-1,0.8){\tiny$3$}
\rput(-1,2.95){\tiny$4$}

\rput(2.2,1.9){
\pstree[treesep=0.5,levelsep=0.5cm]{\Tc*{0pt}}{\pstree[treesep=1.2cm,levelsep=1.2cm]{\Tc*{1.2pt}}{\pstree[treesep=1.2cm,levelsep=0.7cm]{\Tc*{1.2pt}}{\Tc*{1.2pt} \Tc*{1.2pt}} \pstree[treesep=1.2cm,levelsep=0.7cm]{\Tc*{1.2pt}}{\Tc*{1.2pt} \Tc*{1.2pt}}}}
}
\pscircle[linecolor=black](2.2,2.61){0.12}
\rput(0.25,0.55){\tiny$1$}
\rput(1.55,0.55){\tiny$3$}
\rput(2.85,0.55){\tiny$2$}
\rput(4.15,0.55){\tiny$4$}

\psrotate(7,1.9){262}{
\pscircle(7,1.900){1.25}
\psrotate(7,1.9){20}{\psline{*-*}(7,0.65)(7,3.15)}
\psarc[linecolor=black]{*-*}(5.6,3.2){1.5}{277}{357}
\psrotate(7,1.9){-60}{\psarc[linecolor=black]{*-*}(5.6,3.2){1.6}{277}{357}}
\psrotate(7,1.9){150}{\psarc[linecolor=black]{*-*}(5.6,3.2){1.6}{277}{357}}
}
\pscircle[linecolor=red](6.37,0.78){0.12}
\rput(6.37,0.5){\tiny$1$}
\rput(7.15,0.5){\tiny$2$}
\rput(8.25,1.6){\tiny$3$}
\rput(8.3,2.2){\tiny$4$}

\rput(11.8,1.9){
\pstree[treesep=0.5,levelsep=0.5cm]{\Tc*{0pt}}{\pstree[treesep=1.2cm,levelsep=1.2cm]{\Tc*{1.2pt}}{\pstree[treesep=1.2cm,levelsep=0.7cm]{\Tc*{1.2pt}}{\Tc*{1.2pt} \Tc*{1.2pt}} \pstree[treesep=1.2cm,levelsep=0.7cm]{\Tc*{1.2pt}}{\Tc*{1.2pt} \Tc*{1.2pt}}}
}}
\pscircle[linecolor=black](11.87,2.61){0.12}
\rput(9.95,0.55){\tiny$2$}
\rput(11.25,0.55){\tiny$3$}
\rput(12.5,0.55){\tiny$1$}
\rput(13.8,0.55){\tiny$4$}

\end{pspicture}
\end{center}

\end{figure} \hspace{0.5cm}

\begin{figure}[H]

 \begin{center}
 \hspace{3cm}
\begin{pspicture}(0,0)(12.8,3.2)

\pscircle(-2,1.900){1.25}
\psrotate(-2,1.9){45}{\psline{*-*}(-2,0.65)(-2,3.15)}
\psrotate(-2,1.9){-45}{\psline{*-*}(-2,0.65)(-2,3.15)}
\psrotate(-2,1.9){55}{\psarc[linecolor=black]{*-*}(-2,3.75){1}{230}{310}}
\psrotate(-2,1.9){-55}{\psarc[linecolor=black]{*-*}(-2,3.75){1}{230}{310}}

\pscircle[linecolor=red](-2.89,1.01){0.12}
\rput(-2.89,0.75){\tiny$1$}
\rput(-1.0,0.8){\tiny$2$}
\rput(-0.55,2){\tiny$4$}
\rput(-2.57,3.25){\tiny$3$}

\rput(2.2,1.9){
\pstree[treesep=0.5,levelsep=0.5cm]{\Tc*{0pt}}{\pstree[treesep=1.2cm,levelsep=1.2cm]{\Tc*{1.2pt}}{\pstree[treesep=1.2cm,levelsep=0.7cm]{\Tc*{1.2pt}}{\Tc*{1.2pt} \Tc*{1.2pt}} \pstree[treesep=1.2cm,levelsep=0.7cm]{\Tc*{1.2pt}}{\Tc*{1.2pt} \Tc*{1.2pt}}}}
}
\pscircle[linecolor=black](2.2,2.61){0.12}
\rput(0.25,0.55){\tiny$1$}
\rput(1.55,0.55){\tiny$4$}
\rput(2.85,0.55){\tiny$2$}
\rput(4.15,0.55){\tiny$3$}

\psrotate(7,1.9){185}{
\pscircle(7,1.900){1.25}
\psrotate(7,1.9){45}{\psline{*-*}(7,0.65)(7,3.15)}
\psarc[linecolor=black]{*-*}(5.6,3.2){1.5}{277}{357}
\psrotate(7,1.9){180}{\psarc[linecolor=black]{*-*}(5.6,3.2){1.2}{277}{357}}
\psrotate(7,1.9){300}{\psarc[linecolor=black]{*-*}(5.6,3.2){1.2}{277}{357}}
}
\pscircle[linecolor=red](5.98,1.01){0.12}
\rput(5.98,0.75){\tiny$1$}
\rput(6.9,0.5){\tiny$2$}
\rput(8.05,1.1){\tiny$3$}
\rput(6.55,3.35){\tiny$4$}

\rput(11.8,1.9){
\pstree[treesep=0.5,levelsep=0.5cm]{\Tc*{0pt}}{\pstree[treesep=1.2cm,levelsep=1.2cm]{\Tc*{1.2pt}}{\pstree[treesep=1.2cm,levelsep=0.7cm]{\Tc*{1.2pt}}{\Tc*{1.2pt} \Tc*{1.2pt}} \pstree[treesep=1.2cm,levelsep=0.7cm]{\Tc*{1.2pt}}{\Tc*{1.2pt} \Tc*{1.2pt}}}
}}
\pscircle[linecolor=black](11.87,2.61){0.12}
\rput(9.95,0.55){\tiny$2$}
\rput(11.25,0.55){\tiny$4$}
\rput(12.5,0.55){\tiny$1$}
\rput(13.8,0.55){\tiny$3$}

\end{pspicture}
\end{center}

\end{figure} \hspace{0.5cm}

\begin{figure}[H]

 \begin{center}
 \hspace{3cm}
\begin{pspicture}(0,0)(12.8,3.2)

\pscircle(-2,1.900){1.25}
\psrotate(-2,1.9){0}{\psline{*-*}(-2,0.65)(-2,3.15)}
\psrotate(-2,1.9){80}{\psline{*-*}(-2,0.65)(-2,3.15)}
\psrotate(-2,1.9){110}{\psline{*-*}(-2,0.65)(-2,3.15)}
\psarc[linecolor=black]{*-*}(-2,0.4){0.9}{33}{146}

\pscircle[linecolor=red](-2,0.65){0.12}
\rput(-1.15,0.65){\tiny$2$}
\rput(-2,0.4){\tiny$1$}
\rput(-0.55,1.65){\tiny$3$}
\rput(-0.65,2.3){\tiny$4$}

\rput(2.2,1.9){
\pstree[treesep=0.5cm,levelsep=0.5cm]{\Tc*{0pt}}{\pstree[treesep=1.2cm,levelsep=0.7cm]{\Tc*{1.2pt}}{\pstree[treesep=1.2cm,levelsep=0.7cm]{\Tc*{1.2pt}}{\pstree[treesep=1.2cm,levelsep=0.7cm]{\Tc*{1.2pt}}{\Tc*{1.2pt} \Tc*{1.2pt}} \Tc*{1.2pt}} \Tc*{1.2pt}}}
}
\pscircle[linecolor=black](2.84,2.71){0.12}
\rput(0.92,0.45){\tiny$1$}
\rput(2.2,0.45){\tiny$3$}
\rput(2.85,1.15){\tiny$4$}
\rput(3.5,1.85){\tiny$2$}

\psrotate(7,1.9){160}{
\pscircle(7,1.900){1.25}
\psrotate(7,1.9){20}{\psline{*-*}(7,0.65)(7,3.15)}
\psarc[linecolor=black]{*-*}(5.6,3.2){1.5}{277}{357}
\psrotate(7,1.9){-60}{\psarc[linecolor=black]{*-*}(5.6,3.2){1.6}{277}{357}}
\psrotate(7,1.9){150}{\psarc[linecolor=black]{*-*}(5.6,3.2){1.6}{277}{357}}
}
\pscircle[linecolor=red](6.88,0.65){0.12}
\rput(6.88,0.4){\tiny$1$}
\rput(7.45,0.6){\tiny$2$}
\rput(8.25,1.6){\tiny$3$}
\rput(8.2,2.55){\tiny$4$}

\rput(11.8,1.9){
\pstree[treesep=0.5cm,levelsep=0.5cm]{\Tc*{0pt}}{\pstree[treesep=1.2cm,levelsep=0.7cm]{\Tc*{1.2pt}}{\pstree[treesep=1.2cm,levelsep=0.7cm]{\Tc*{1.2pt}}{\pstree[treesep=1.2cm,levelsep=0.7cm]{\Tc*{1.2pt}}{\Tc*{1.2pt} \Tc*{1.2pt}} \Tc*{1.2pt}} \Tc*{1.2pt}}}
}
\pscircle[linecolor=black](12.44,2.71){0.12}
\rput(10.52,0.45){\tiny$1$}
\rput(11.8,0.45){\tiny$4$}
\rput(12.46,1.15){\tiny$2$}
\rput(13.11,1.85){\tiny$3$}

\end{pspicture}
\end{center}

\end{figure} \hspace{0.5cm}

\begin{figure}[H]

 \begin{center}
 \hspace{3cm}
\begin{pspicture}(0,0)(12.8,3.2)

\pscircle(-2,1.900){1.25}
\psrotate(-2,1.9){20}{\psline{*-*}(-2,0.65)(-2,3.15)}
\psrotate(-2,1.9){-20}{\psline{*-*}(-2,0.65)(-2,3.15)}
\psarc[linecolor=black]{*-*}(-2,3.15){1}{204}{336}
\psrotate(-2,1.9){180}{\psarc[linecolor=black]{*-*}(-2,3.15){1}{204}{336}}

\pscircle[linecolor=red](-2.43,0.72){0.12}
\rput(-2.43,0.47){\tiny$1$}
\rput(-1.55,0.49){\tiny$2$}
\rput(-0.85,1.0){\tiny$3$}
\rput(-0.9,2.9){\tiny$4$}

\rput(2.2,1.9){
\pstree[treesep=0.5cm,levelsep=0.5cm]{\Tc*{0pt}}{\pstree[treesep=1.2cm,levelsep=0.7cm]{\Tc*{1.2pt}}{\pstree[treesep=1.2cm,levelsep=0.7cm]{\Tc*{1.2pt}}{\pstree[treesep=1.2cm,levelsep=0.7cm]{\Tc*{1.2pt}}{\Tc*{1.2pt} \Tc*{1.2pt}} \Tc*{1.2pt}} \Tc*{1.2pt}}}
}
\pscircle[linecolor=black](2.84,2.71){0.12}
\rput(0.92,0.45){\tiny$1$}
\rput(2.2,0.45){\tiny$2$}
\rput(2.85,1.15){\tiny$4$}
\rput(3.5,1.85){\tiny$3$}

\psrotate(7,1.9){-45}{
\pscircle(7,1.900){1.25}
\psrotate(7,1.9){45}{\psline{*-*}(7,0.65)(7,3.15)}
\psrotate(7,1.9){-45}{\psline{*-*}(7,0.65)(7,3.15)}
\psarc[linecolor=black]{*-*}(5.6,3.2){1.2}{277}{357}
\psrotate(7,1.9){180}{\psarc[linecolor=black]{*-*}(5.6,3.2){1.2}{277}{357}}
}
\pscircle[linecolor=red](6.88,0.65){0.12}
\rput(6.88,0.4){\tiny$1$}
\rput(7.95,0.85){\tiny$2$}
\rput(8.35,1.85){\tiny$3$}
\rput(7.9,2.98){\tiny$4$}

\rput(11.8,1.9){
\pstree[treesep=0.5cm,levelsep=0.5cm]{\Tc*{0pt}}{\pstree[treesep=1.2cm,levelsep=0.7cm]{\Tc*{1.2pt}}{\pstree[treesep=1.2cm,levelsep=0.7cm]{\Tc*{1.2pt}}{\pstree[treesep=1.2cm,levelsep=0.7cm]{\Tc*{1.2pt}}{\Tc*{1.2pt} \Tc*{1.2pt}} \Tc*{1.2pt}} \Tc*{1.2pt}}}
}
\pscircle[linecolor=black](12.44,2.71){0.12}
\rput(10.52,0.45){\tiny$1$}
\rput(11.8,0.45){\tiny$4$}
\rput(12.46,1.15){\tiny$3$}
\rput(13.11,1.85){\tiny$2$}

\end{pspicture}
\end{center}

\end{figure} \hspace{0.5cm}

\begin{figure}[H]

 \begin{center}
 \hspace{3cm}
\begin{pspicture}(0,0)(12.8,3.2)

\pscircle(-2,1.900){1.25}
\psrotate(-2,1.9){20}{\psline{*-*}(-2,0.65)(-2,3.15)}
\psrotate(-2,1.9){-20}{\psline{*-*}(-2,0.65)(-2,3.15)}
\psrotate(-2,1.9){58}{\psline{*-*}(-2,0.65)(-2,3.15)}
\psrotate(-2,1.9){110}{\psline{*-*}(-2,0.65)(-2,3.15)}

\pscircle[linecolor=red](-2.43,0.72){0.12}
\rput(-2.43,0.47){\tiny$1$}
\rput(-1.55,0.49){\tiny$2$}
\rput(-0.85,1.0){\tiny$3$}
\rput(-0.65,2.45){\tiny$4$}

\rput(2.2,1.9){
\pstree[treesep=0.5cm,levelsep=0.5cm]{\Tc*{0pt}}{\pstree[treesep=1.2cm,levelsep=0.7cm]{\Tc*{1.2pt}}{\pstree[treesep=1.2cm,levelsep=0.7cm]{\Tc*{1.2pt}}{\pstree[treesep=1.2cm,levelsep=0.7cm]{\Tc*{1.2pt}}{\Tc*{1.2pt} \Tc*{1.2pt}} \Tc*{1.2pt}} \Tc*{1.2pt}}}
}
\pscircle[linecolor=black](2.84,2.71){0.12}
\rput(0.92,0.45){\tiny$1$}
\rput(2.2,0.45){\tiny$2$}
\rput(2.85,1.15){\tiny$3$}
\rput(3.5,1.85){\tiny$4$}

\psrotate(7,1.9){50}{
\pscircle(7,1.900){1.25}
\psarc[linecolor=black]{*-*}(5.6,3.2){1.5}{277}{357}
\psrotate(7,1.9){-90}{\psarc[linecolor=black]{*-*}(5.6,3.2){1.6}{277}{357}}
\psrotate(7,1.9){210}{\psarc[linecolor=black]{*-*}(5.6,3.2){1.6}{277}{357}}
\psrotate(7,1.9){-60}{\psarc[linecolor=black]{*-*}(5.6,3.2){1.6}{277}{357}}
}
\pscircle[linecolor=red](6.23,0.85){0.12}
\rput(6.23,0.6){\tiny$1$}
\rput(8.25,1.4){\tiny$2$}
\rput(7.95,2.85){\tiny$3$}
\rput(7.35,3.22){\tiny$4$}

\rput(11.8,1.9){
\pstree[treesep=0.5cm,levelsep=0.5cm]{\Tc*{0pt}}{\pstree[treesep=1.4cm,levelsep=0.7cm]{\Tc*{1.2pt}}{\pstree[treesep=1.2cm,levelsep=0.6cm]{\Tc*{1.2pt}}{\Tc*{1.2pt} \pstree[treesep=0.7cm,levelsep=0.5cm]{\Tc*{1.2pt}}{\Tc*{1.2pt} \Tc*{1.2pt} }} \Tc*{1.2pt}}}
}
\pscircle[linecolor=black](12.12,2.56){0.12}
\rput(10.74,1.1){\tiny$2$}
\rput(11.63,0.6){\tiny$1$}
\rput(12.44,0.6){\tiny$4$}
\rput(12.87,1.69){\tiny$3$}

\end{pspicture}
\end{center}

\end{figure} \hspace{0.5cm}

\begin{figure}[H]

 \begin{center}
 \hspace{3cm}
\begin{pspicture}(0,0)(12.8,3.2)

\psrotate(-2,1.9){35}{
\pscircle(-2,1.900){1.25}
\psrotate(-2,1.9){20}{\psline{*-*}(-2,0.65)(-2,3.15)}
\psrotate(-2,1.9){-20}{\psline{*-*}(-2,0.65)(-2,3.15)}
\psarc[linecolor=black]{*-*}(-2,3.15){1}{204}{336}
\psrotate(-2,1.9){180}{\psarc[linecolor=black]{*-*}(-2,3.15){1}{204}{336}}
}

\pscircle[linecolor=red](-2.4,0.69){0.12}
\rput(-2.4,0.45){\tiny$1$}
\rput(-1.75,0.49){\tiny$2$}
\rput(-0.9,1.05){\tiny$3$}
\rput(-1.75,3.35){\tiny$4$}

\rput(2.2,1.9){
\pstree[treesep=0.5cm,levelsep=0.5cm]{\Tc*{0pt}}{\pstree[treesep=1.4cm,levelsep=0.7cm]{\Tc*{1.2pt}}{\pstree[treesep=1.2cm,levelsep=0.6cm]{\Tc*{1.2pt}}{\Tc*{1.2pt} \pstree[treesep=0.7cm,levelsep=0.5cm]{\Tc*{1.2pt}}{\Tc*{1.2pt} \Tc*{1.2pt} }} \Tc*{1.2pt}}}
}
\pscircle[linecolor=black](2.52,2.56){0.12}
\rput(1.14,1.1){\tiny$1$}
\rput(2.02,0.6){\tiny$2$}
\rput(2.83,0.6){\tiny$3$}
\rput(3.27,1.69){\tiny$4$}

\psrotate(7,1.9){140}{
\pscircle(7,1.900){1.25}
\psrotate(7,1.9){20}{\psline{*-*}(7,0.65)(7,3.15)}
\psarc[linecolor=black]{*-*}(5.6,3.2){1.5}{277}{357}
\psrotate(7,1.9){-60}{\psarc[linecolor=black]{*-*}(5.6,3.2){1.6}{277}{357}}
\psrotate(7,1.9){150}{\psarc[linecolor=black]{*-*}(5.6,3.2){1.6}{277}{357}}
}
\pscircle[linecolor=red](6.01,1.04){0.12}
\rput(5.99,0.76){\tiny$1$}
\rput(6.35,0.55){\tiny$2$}
\rput(6.95,0.45){\tiny$3$}
\rput(8.25,2.22){\tiny$4$}

\rput(11.8,1.9){
\pstree[treesep=0.5cm,levelsep=0.5cm]{\Tc*{0pt}}{\pstree[treesep=1.4cm,levelsep=0.7cm]{\Tc*{1.2pt}}{\pstree[treesep=1.2cm,levelsep=0.6cm]{\Tc*{1.2pt}}{\Tc*{1.2pt} \pstree[treesep=0.7cm,levelsep=0.5cm]{\Tc*{1.2pt}}{\Tc*{1.2pt} \Tc*{1.2pt} }} \Tc*{1.2pt}}}
}
\pscircle[linecolor=black](12.12,2.56){0.12}
\rput(10.74,1.1){\tiny$1$}
\rput(11.63,0.6){\tiny$2$}
\rput(12.44,0.6){\tiny$4$}
\rput(12.87,1.69){\tiny$3$}

\end{pspicture}
\end{center}

\end{figure} \hspace{0.5cm}

\begin{figure}[H]

 \begin{center}
 \hspace{3cm}
\begin{pspicture}(0,0)(12.8,3.2)

\psrotate(-2,1.9){0}{
\pscircle(-2,1.900){1.25}
\psrotate(-2,1.9){10}{\psarc[linecolor=black]{*-*}(-2,3.45){1.4}{221}{319}}
\psrotate(-2,1.9){100}{\psarc[linecolor=black]{*-*}(-2,3.45){1.4}{221}{319}}
\psrotate(-2,1.9){190}{\psarc[linecolor=black]{*-*}(-2,3.45){1.4}{221}{319}}
\psrotate(-2,1.9){280}{\psarc[linecolor=black]{*-*}(-2,3.45){1.4}{221}{319}}
}
\pscircle[linecolor=red](-2.58,0.74){0.12}
\rput(-2.58,0.5){\tiny$1$}
\rput(-1.25,0.79){\tiny$2$}
\rput(-0.85,1.35){\tiny$3$}
\rput(-1.02,2.78){\tiny$4$}

\rput(2.2,1.9){
\pstree[treesep=0.5cm,levelsep=0.5cm]{\Tc*{0pt}}{\pstree[treesep=1.4cm,levelsep=0.7cm]{\Tc*{1.2pt}}{\pstree[treesep=1.2cm,levelsep=0.6cm]{\Tc*{1.2pt}}{\Tc*{1.2pt} \pstree[treesep=0.7cm,levelsep=0.5cm]{\Tc*{1.2pt}}{\Tc*{1.2pt} \Tc*{1.2pt} }} \Tc*{1.2pt}}}
}
\pscircle[linecolor=black](2.52,2.56){0.12}
\rput(1.14,1.1){\tiny$2$}
\rput(2.02,0.6){\tiny$1$}
\rput(2.83,0.6){\tiny$4$}
\rput(3.27,1.69){\tiny$3$}

\psrotate(7,1.9){-30}{
\pscircle(7,1.900){1.25}
\psarc[linecolor=black]{*-*}(7.3,0.5){1}{44}{160}
\psrotate(7,1.9){60}{\psarc[linecolor=black]{*-*}(7.3,0.5){1}{44}{160}}
\psrotate(7,1.9){120}{\psarc[linecolor=black]{*-*}(7.3,0.5){1}{44}{160}}
\psrotate(7,1.9){180}{\psarc[linecolor=black]{*-*}(7.3,0.5){1}{44}{160}}
}
\pscircle[linecolor=red](6.85,0.65){0.12}
\rput(6.85,0.4){\tiny$1$}
\rput(7.45,0.6){\tiny$2$}
\rput(8.15,1.2){\tiny$3$}
\rput(8.15,2.62){\tiny$4$}

\rput(11.8,1.9){
\pstree[treesep=0.5cm,levelsep=0.5cm]{\Tc*{0pt}}{\pstree[treesep=1.4cm,levelsep=0.7cm]{\Tc*{1.2pt}}{\pstree[treesep=1.2cm,levelsep=0.6cm]{\Tc*{1.2pt}}{\Tc*{1.2pt} \pstree[treesep=0.7cm,levelsep=0.5cm]{\Tc*{1.2pt}}{\Tc*{1.2pt} \Tc*{1.2pt} }} \Tc*{1.2pt}}}
}
\pscircle[linecolor=black](12.12,2.56){0.12}
\rput(10.74,1.1){\tiny$1$}
\rput(11.63,0.6){\tiny$3$}
\rput(12.44,0.6){\tiny$4$}
\rput(12.87,1.69){\tiny$2$}

\end{pspicture}
\end{center}

\end{figure} \hspace{0.5cm}

\begin{figure}[H]

 \begin{center}
 \hspace{3cm}
\begin{pspicture}(0,0)(12.8,3.2)

\psrotate(-2,1.9){190}{
\pscircle(-2,1.900){1.25}
\psrotate(-2,1.9){200}{\psarc[linecolor=black]{*-*}(-2,3.45){1.1}{217}{323}}
\psrotate(-2,1.9){260}{\psarc[linecolor=black]{*-*}(-2,3.45){0.9}{217}{323}}
\psrotate(-2,1.9){320}{\psarc[linecolor=black]{*-*}(-2,3.45){0.9}{217}{323}}
\psrotate(-2,1.9){360}{\psarc[linecolor=black]{*-*}(-2,3.45){0.9}{217}{323}}
}
\pscircle[linecolor=red](-2.65,0.77){0.12}
\rput(-2.65,0.5){\tiny$1$}
\rput(-2.0,0.45){\tiny$2$}
\rput(-1.8,3.32){\tiny$3$}
\rput(-3.25,2.78){\tiny$4$}

\rput(2.2,1.9){
\pstree[treesep=0.5cm,levelsep=0.5cm]{\Tc*{0pt}}{\pstree[treesep=1.4cm,levelsep=0.7cm]{\Tc*{1.2pt}}{\pstree[treesep=1.2cm,levelsep=0.6cm]{\Tc*{1.2pt}}{\Tc*{1.2pt} \pstree[treesep=0.7cm,levelsep=0.5cm]{\Tc*{1.2pt}}{\Tc*{1.2pt} \Tc*{1.2pt} }} \Tc*{1.2pt}}}
}
\pscircle[linecolor=black](2.52,2.56){0.12}
\rput(1.14,1.1){\tiny$3$}
\rput(2.02,0.6){\tiny$1$}
\rput(2.83,0.6){\tiny$4$}
\rput(3.27,1.69){\tiny$2$}

\end{pspicture}
\end{center}

\end{figure} \hspace{0.5cm}

\bibliographystyle{plain}
\bibliography{chord}

\end{document}